\documentclass[letterpaper,11pt]{article}
\usepackage[margin=1in]{geometry}
\usepackage[T1]{fontenc}
\usepackage{lmodern}
\usepackage{amsmath,amssymb,amsthm,mathtools,microtype}
\usepackage{enumitem,aliascnt,xcolor}
\usepackage{graphicx,subfig,float}
\usepackage[colorlinks=true,linkcolor=blue,citecolor=red,urlcolor=blue]{hyperref}
\usepackage[nameinlink,noabbrev]{cleveref}

\newtheorem{theorem}{Theorem}[section]
\newtheorem{lemma}{Lemma}[section]
\newtheorem{proposition}{Proposition}[section]

\theoremstyle{remark}
\newtheorem{remark}{Remark}[section]

\makeatletter
\let\c@lemma\c@theorem
\let\c@proposition\c@theorem
\let\c@corollary\c@theorem
\let\c@remark\c@theorem
\makeatother

\newcommand\numberthis{\addtocounter{equation}{1}\tag{\theequation}}

\numberwithin{equation}{section}
\newcommand{\R}{\mathbb R}
\newcommand{\C}{\mathbb C}
\newcommand{\reals}{\mathbb{R}}

\newcommand{\integers}{\mathbb{Z}}
\newcommand{\complex}{\mathbb{C}}
\newcommand{\Expect}{\mathbb{E}}

\newcommand{\Prob}{\mathbb{P}}

\newcommand{\pth}[1]{\left( #1 \right)}

\newcommand{\sth}[1]{\left\{ #1 \right\}}

\newcommand{\todistr}{{\xrightarrow{{\rm D}}}}

\newcommand{\iprod}[2]{\left \langle #1, #2 \right\rangle}

\newcommand{\Indc}{\mathbf{1}}

\newcommand{\E}{\Expect}
\newcommand{\N}{\mathcal N}

\newcommand{\Nzero}{\mathbb{Z}_+}
\newcommand{\eps}{\varepsilon}
\def\d{\mathrm{d}}

\newcommand{\stepa}[1]{\overset{\rm (a)}{#1}}
\newcommand{\stepb}[1]{\overset{\rm (b)}{#1}}
\newcommand{\stepc}[1]{\overset{\rm (c)}{#1}}
\newcommand{\stepd}[1]{\overset{\rm (d)}{#1}}

\newcommand{\calE}{{\mathcal{E}}}

\newcommand{\calH}{{\mathcal{H}}}

\newcommand{\calN}{{\mathcal{N}}}

\newcommand{\calP}{{\mathcal{P}}}

\newcommand{\calS}{{\mathcal{S}}}

\newcommand{\calX}{{\mathcal{X}}}

\newcommand{\K}{\mathbf K}

\newcommand{\cP}{\mathcal P}

\newcommand{\norm}[1]{\left\lVert#1\right\rVert}

\newcommand{\pnorm}[2]{\lVert #1\rVert_{#2}}

\DeclareMathOperator{\supp}{supp}

\DeclareMathOperator{\Cov}{Cov}

\DeclareMathOperator{\Rea}{Re}
\DeclareMathOperator{\Ima}{Im}
\DeclareMathOperator{\diag}{Diag}

\allowdisplaybreaks[1]
\setlist[itemize]{topsep=4pt,itemsep=2pt}

\usepackage{prettyref}
\newrefformat{eq}{(\ref{#1})}
\newrefformat{e}{(\ref{#1})}
\newrefformat{thm}{Theorem~\ref{#1}}
\newrefformat{th}{Theorem~\ref{#1}}
\newrefformat{chap}{Chapter~\ref{#1}}
\newrefformat{sec}{Section~\ref{#1}}
\newrefformat{seca}{Section~\ref{#1}}
\newrefformat{algo}{Algorithm~\ref{#1}}
\newrefformat{fig}{Figure~\ref{#1}}
\newrefformat{tab}{Table~\ref{#1}}
\newrefformat{rmk}{Remark~\ref{#1}}
\newrefformat{clm}{Claim~\ref{#1}}
\newrefformat{def}{Definition~\ref{#1}}
\newrefformat{cor}{Corollary~\ref{#1}}
\newrefformat{lmm}{Lemma~\ref{#1}}
\newrefformat{lem}{Lemma~\ref{#1}}
\newrefformat{prop}{Proposition~\ref{#1}}
\newrefformat{pr}{Proposition~\ref{#1}}
\newrefformat{app}{Appendix~\ref{#1}}
\newrefformat{apx}{Appendix~\ref{#1}}
\newrefformat{ex}{Example~\ref{#1}}
\newrefformat{exer}{Exercise~\ref{#1}}
\newrefformat{soln}{Solution~\ref{#1}}

\begin{document}
\title{Self-regularization and structural adaptation of NPMLE for Gaussian mixture models in all dimensions}

\author{Yanjun Han\thanks{
Courant Institute of Mathematical Sciences and Center for Data Science, New York University, \texttt{yanjunhan@nyu.edu}.} , Yandi Shen\thanks{Department of Statistics and Data Science, Carnegie Mellon University, \texttt{yandis@andrew.cmu.edu}.} , and Yihong Wu\thanks{Department of Statistics and Data Science, Yale University, \texttt{yihong.wu@yale.edu}.}}
\date{\today}

\maketitle

\begin{abstract}

We establish structural properties of the nonparametric maximum likelihood estimator (NPMLE) for $d$-dimensional Gaussian mixtures whose means lie in a hypercube. 
We show that, with high probability, the NPMLE based on $n$ independent observations self-regularizes by fitting a mixture model with $O((\frac{\log n}{\log\log n})^d)$ components, achieving the optimal model complexity without explicit model selection. 
Furthermore, the NPMLE exhibits a structural adaptation property: For data drawn from a finite mixture, the number of fitted components is bounded in probability as $n\to\infty$. 

Our proof applies a multivariate Jensen-type bound to control the typical number of critical points of the KKT function associated with the
first-order optimality conditions. The key step is to lower bound its gradient norm uniformly on complex spheres. We establish this lower bound by decomposing the KKT function into an empirical fluctuation term and a relative density estimation error term, combining small-ball probability estimates for the former with polynomial approximation and Hellinger error bounds for the latter.
\end{abstract}

\tableofcontents

\section{Introduction}

The nonparametric maximum likelihood estimator (NPMLE) \cite{KieferWolfowitz1956} is a widely used method for fitting mixture models and learning priors in empirical Bayes estimation. Its appeal lies in the combination of statistical accuracy, computational efficiency, and flexibility (such as tuning-parameter freeness).
Consider a parametric family of densities $\{p_\theta:\theta\in\Theta\}$ with respect to a common dominating measure. Given observations $X_1,\ldots,X_n$, we maximize the total log-likelihood over the space 
$\cP(\Theta)$ of all probability measures on $\Theta$, resulting in the NPMLE:
\begin{equation}\label{eq:NPMLE-intro}
 \widehat G\in\operatorname*{arg\,max}_{G\in\cP(\Theta)}
 \sum_{i=1}^n\log p_G(X_i).
\end{equation}
Here 
\[
 p_G(x):=\int_\Theta p_\theta(x)\d G(\theta)
 \]
 is the mixture density and $G$ is referred to as the mixing distribution or prior.
 A principal example is the Gaussian location mixture model in $d$ dimensions, where $p_\theta$ is the density of $N(\theta,I)$ and $\theta\in\Theta\subseteq\R^d$. In this case, $p_G = G * N(0,I)$ is a Gaussian convolution. 

 Computing the NPMLE
 \prettyref{eq:NPMLE-intro} is an infinite-dimensional convex optimization problem and 
can be solved using ``on-the-grid'' methods by optimizing mixing weights over discretized $\Theta$  \cite{KoenkerMizera2014}, or 
``off-the-grid'' methods, including the Frank--Wolfe algorithm \cite{FrankWolfe1956} (known as the vertex direction method in the mixture literature \cite{Bohning1985}) and methods based on gradient flow \cite{yan2024learning}.

The study of the NPMLE has two complementary aspects:
\begin{enumerate}
\item \emph{Statistical properties:} Given $n$ independent observations from $p_{G_\star}$, whether the estimated mixing distribution $\widehat G$ and the fitted density $p_{\widehat G}$ approach the respective ground truth $G_{\star}$ and $p_{G_{\star}}$ as $n\to\infty$, and at what rates.
\item \emph{Structural properties:} Existence, uniqueness, and discreteness of the maximizer, 
in both worst case and average case.
Of particular interest is the cardinality of its support,  corresponding to the number of components fitted by the NPMLE.

\end{enumerate}

In one dimension, both aspects are relatively well-understood. 
Statistically, the NPMLE is known to attain (nearly) optimal rates for estimating the mixture density for a variety of prior classes \cite{GhosalVanDerVaart2001,Zhang2009,PolyanskiyWu2021,ChenWu2026,hundrieser2026minimax}.
Structurally,
the univariate Gaussian NPMLE is unique and discrete deterministically, with at most $n$ atoms \cite{Lindsay1983a,Lindsay1983b}. 
Furthermore, for iid observations with a subgaussian prior, 
\cite{PolyanskiyWu2020} showed that the NPMLE has $O(\log n)$ atoms with high probability. This phenomenon is termed \emph{self-regularization}: without explicitly penalizing model complexity, the NPMLE selects the right number of components, in the sense that $O(\log n)$ components are both necessary and sufficient for reaching the desired statistical accuracy.

In multiple dimensions, while the statistical theory for NPMLE has been extended in \cite{SahaGuntuboyina2020}, the structural property is far less understood and appears more nuanced. 
Unlike one dimension,  
\cite{SoloffGuntuboyinaSen2025} identified certain configuration of data points in two dimensions for which the NPMLE is not unique. 
Only recently, it was shown that 
for the unconstrained problem of $\Theta=\reals^d$, 
every NPMLE is finitely discrete \cite{Wang2026Finite} with $O(n^d)$ atoms deterministically, and that almost surely the solution is unique  with at most $n$ atoms \cite{Wang2026Unique}. These results settle qualitative structural questions, while leaving open the typical model complexity and its dependence on the true mixing distribution.

In this paper, we consider Gaussian mixture models in dimension $d\geq1$, with means lying in a bounded hypercube $\Theta=K$. The NPMLE $\widehat G$ is obtained by maximizing the likelihood \prettyref{eq:NPMLE-intro} over all distributions on $K$.
Our main contributions are as follows:
\begin{enumerate}
\item \emph{Self-regularization for arbitrary mixtures.} Uniformly over all true priors $G_\star\in\cP(K)$, the NPMLE satisfies
\begin{equation}
 |\supp(\widehat G)|
 =O_{\Prob}\left(\left(\frac{\log n}{\log\log n}\right)^d\right),
 \label{eq:main1-intro}
\end{equation}
matching the optimal complexity for approximating arbitrary mixtures up to statistically negligible error.

\item \emph{Structural adaptation for finite mixtures.} For any fixed finitely supported $G_\star$, we show that
\begin{equation}
|\supp(\widehat G)|=O_{\Prob}(1). 
    \label{eq:main2-intro}
\end{equation}
Thus the number of fitted components remains bounded in probability even though the likelihood is maximized over the infinitely overparametrized space of all priors.
\end{enumerate}
We note that both results are new even for $d=1$. In particular, \cite[Section~5.3]{PolyanskiyWu2020} raised the question whether the NPMLE over a compact interval has a stronger regularization effect, achieving model complexity $O_\Prob(\frac{\log n}{\log\log n})$ as opposed to  $O_\Prob(\log n)$ for unconstrained NPMLE. This is affirmed by \prettyref{eq:main1-intro}.

\subsection{Model and main theorems}
\label{sec:model}

Fix an integer $d\ge1$.  For a real vector $a=(a_1,\ldots,a_d)\in\R^d$, we write $\norm{a}=\sqrt{\sum_{j=1}^d a_j^2}$ for its Euclidean norm.
For $z\in\C^d$, write $\norm{z}^2=\norm{\Rea z}^2+\norm{\Ima z}^2$ for the squared Euclidean norm.

Fix an absolute constant $M>0$ and let $K=[-M,M]^d$. 
Denote the Gaussian density centered at $\theta\in\R^d$ by
\begin{equation}
 p_\theta(x)=\varphi(x-\theta)=(2\pi)^{-d/2}\exp\{-\norm{x-\theta}^2/2\},
 \qquad x\in\R^d,
 \label{eq:kernel}
\end{equation}
and for $G\in\cP(K)$ set
\begin{equation}
 p_G(x)=\int_K p_\theta(x)\d G(\theta).
 \label{eq:mixture}
\end{equation}
The observations $X_1,\ldots,X_n$ are iid from $p_\star\equiv p_{G_\star}$.  Let $P_n=n^{-1}\sum_{i=1}^n\delta_{X_i}$ be the empirical measure.  For an integrable function $f$, denote its population and empirical average as
\[
 \Expect_\star f\triangleq\int_{\R^d}f(x)p_\star(x)\d x,
 \qquad
 \Expect_n f\triangleq\int f\d P_n=\frac1n\sum_{i=1}^n f(X_i).
\]

Denote the NPMLE by
\begin{equation}
 \widehat G = \operatorname*{arg\,max}_{G\in\calP(K)} \ell_n(G), \quad \ell_n(G):=\sum_{i=1}^n\log p_G(X_i).
 \label{eq:likelihood}
\end{equation}
By \cite{Wang2026Unique}, this maximizer is unique almost surely\footnote{
The almost sure uniqueness in \cite{Wang2026Unique} is stated for unconstrained NPMLE but holds for the compactly supported version as well. Nevertheless, our theory does not rely on this uniqueness result, as all high-probability cardinality bounds apply to all NPMLE solutions simultaneously.} with respect to the sample $X_1,\ldots,X_n$.

Throughout the paper, for $n\geq 3$, write
\begin{equation}
 L_n:=\frac{\log n}{\log\log n}.
 \label{eq:Ln}
\end{equation}

\begin{theorem}[Self-regularization]
\label{thm:continuous-prior}
There are constants $B_0,C,c>0$, depending only on $K$, such that for every fixed $B\ge B_0$ and all sufficiently large $n$,
\begin{equation}\label{eq:tail-continuous-prior}
\sup_{G_\star\in\cP(K)}
\Prob_{G_\star}\Big(|\supp(\widehat G)|>B^{2d} L_n^d\Big)
\le Ce^{-cB\sqrt{L_n}}+n^{-c}.
\end{equation}
Consequently, uniformly over $G_\star\in\cP(K)$,
\begin{equation}\label{eq:OP-continuous-prior}
|\supp(\widehat G)|=O_{\Prob}(L_n^d).
\end{equation}
\end{theorem}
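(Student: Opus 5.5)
The plan follows the strategy sketched in the abstract and works with the KKT characterization of $\widehat G$. Define the KKT function
\[
D(\theta) := \Expect_n\!\left[\frac{p_\theta(X)}{p_{\widehat G}(X)}\right], \qquad \theta\in\R^d.
\]
First-order optimality gives $D\le 1$ on $K$, with $D=1$ on $\supp(\widehat G)$. Every atom of $\widehat G$ is therefore either a local maximizer of $D$ in the interior of $K$, hence a zero of $\nabla D$, or a maximizer on some lower-dimensional face. In the face case it is a critical point of the restriction of $D$ to that face, and the faces are handled by the same argument in fewer variables.

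To bound $|\supp(\widehat G)|$, I would partition $K$ into $B^{2d}$-many cubes of side comparable to $1/B^{2}$, or rather into a fixed number of cubes each covered by a small polydisc. On each cube I would count the common zeros of the entire functions $\partial_j D(z)$, $z\in\C^d$, using a multivariate Jensen-type bound. The number of isolated common zeros in a polydisc of radius $r$ is controlled by
\[
\log\frac{\sup_{\|z-\theta_0\|\le R}\|\nabla D(z)\|}{\inf_{\|z-\theta_0\|= r'}\|\nabla D(z)\|}
\]
raised to the power $d$, up to constants, where $\theta_0$ is the center and $r<r'<R$. For this I would use a Bézout/Jensen inequality for holomorphic maps, of the form $\#\{\text{zeros}\}\lesssim (\log(\max/\min))^d$ after rescaling.

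The upper bound is routine. For complex $z$ we have $|p_z(x)|\le p_{\Rea z}(x)e^{\|\Ima z\|^2/2}$, and $p_{\widehat G}(x)\ge c\,e^{-\|x\|^2/2-M\sqrt d\|x\|}$ because $\widehat G$ is supported on $K$. Together these give $\sup\|\nabla D\|\le e^{O(R^2)}\,\max_i e^{O(\|X_i\|)}$. On the event $\max_i\|X_i\|\lesssim\sqrt{\log n}$, which has probability $1-n^{-c}$, this is $e^{O(R^2+R\sqrt{\log n})}$. Choosing $R\asymp B\sqrt{L_n}$ makes the logarithm of the supremum $O(B\sqrt{L_n\log n})$. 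Combined with the lower bound below, this yields a per-polydisc count of order $(B^2 L_n)^d$ after balancing the ratio $R/r'$ against the lost factors.

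The main obstacle is the uniform lower bound $\|\nabla D(z)\|\ge e^{-O(\cdot)}$ on complex spheres. Here I would decompose
\[
D(z)-\Expect_\star\!\left[\frac{p_z}{p_{\widehat G}}\right] \;=\; (\Expect_n-\Expect_\star)\frac{p_z}{p_{\widehat G}} \quad\text{(fluctuation)},
\]
and write
\[
\Expect_\star\!\left[\frac{p_z}{p_{\widehat G}}\right] = 1+\int p_z\,\frac{p_\star-p_{\widehat G}}{p_{\widehat G}} \quad\text{(relative density error)}.
\]
The main term $\nabla\int p_z=0$ vanishes identically, so the gradient is governed by these two error terms. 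This is why the lower bound must be probabilistic.

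I would try the following. By anticoncentration (small-ball estimates) for the empirical fluctuation at a fixed $z$, the gradient of the fluctuation term is at least $n^{-1/2}e^{-O(R^2)}$ with probability $1-e^{-cB\sqrt{L_n}}$. A net argument over the sphere, using the Lipschitz bound from the supremum estimate, then makes this uniform. It remains to show that the relative density term does not cancel the fluctuation. For this I would approximate $p_z/p_{\widehat G}$ by a polynomial of degree $\asymp L_n$ and use the Hellinger rate $H(p_\star,p_{\widehat G})\lesssim \sqrt{\log n}\,n^{-1/2}\cdot\mathrm{polylog}$ for compactly supported priors. The relative error term is then a low-degree functional whose size is small, or at least is absorbed into the freedom of choosing which sphere radius to use. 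Jensen only needs one good radius, so I would average over $r'\in[r,2r]$ and apply Markov's inequality, which produces the $Ce^{-cB\sqrt{L_n}}$ tail.

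Summing the zero counts over all cubes and faces gives $|\supp(\widehat G)|\le B^{2d}L_n^d$ outside an event of probability $Ce^{-cB\sqrt{L_n}}+n^{-c}$. Since no constant depends on $G_\star$, the bound is uniform over $G_\star\in\cP(K)$, and \eqref{eq:OP-continuous-prior} follows.
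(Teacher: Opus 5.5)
Your skeleton matches the paper's: KKT, a multivariate Jensen bound driven by $\sup$ and $\inf$ of $\|\nabla F\|$, a fluctuation/relative-error split, small-ball estimates plus a net, degree-$\asymp L_n$ polynomial approximation, and a Hellinger rate. However, the steps that actually produce $L_n^d$ are either missing or would fail as stated.

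\textbf{Main gap: comparing the two terms on the sphere.} After normalizing by $w_z=\sqrt{r_\star(2\Rea z)}e^{-\|z\|^2/2}$, the fluctuation gradient is of order $1$, while the correction is a priori as large as $h=\mathrm{polylog}(n)$. So nothing prevents cancellation unless you prove a quantitative comparison on $\mathcal S_R$: the correction is at most $he^{-cR}+\sqrt n\,e^{-cR^2\log R}$, while the fluctuation is at least $e^{-2bR}$ with $b$ small. A lower bound like $n^{-1/2}e^{-O(R^2)}$ is far too weak for this.

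Calling the correction ``a low-degree functional whose size is small'' does not give the needed decay, because Hellinger smallness alone allows growth. For $d=1$, $p_\star=\varphi$, take $u=hH_r/\sqrt{r!}$ with $r=R^2$. Then $\E_\star u^2=h^2$, yet $Tu(z)=hz^r/\sqrt{r!}$ has normalized gradient $\asymp h\sqrt R$ on $|z|=R$. Averaging over the inner radius and applying Markov cannot exclude a cancellation that occurs on every sphere.

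The paper's argument runs as follows:
\begin{itemize}
\item It expands $r_{\widehat G}-r_\star$ (not $p_z/p_{\widehat G}$) in Hermite polynomials. The coefficients are moment differences, bounded relative to $H$ by $CH\sqrt{\alpha!}\,e^{C\sqrt{|\alpha|}}$, with an $e^{-cm\log m}$ tail.
\item It truncates at degree $m=\kappa R^2$ with $\kappa$ \emph{small} and keeps $1/r_{\widehat G}$ as a factor.
\item When $\|\Rea z\|\gtrsim\sqrt m$, Gaussian decay controls the polynomial part.
\item Otherwise, it shifts the contour by $i\tau_K\Ima z/\|\Ima z\|$. This is legitimate because $r_G$ is zero-free on a strip, and it gains a factor $e^{-\tau_K\|\Ima z\|}$.
\item The tail costs $\sqrt n\,e^{-cR^2\log R}$; this is precisely what forces $R\asymp\sqrt{L_n}$.
\end{itemize}

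\textbf{The fluctuation term is not an iid sum.} Your fluctuation $(\E_n-\E_\star)(p_z/p_{\widehat G})$ depends on the sample through $\widehat G$, so fixed-$z$ small-ball bounds do not apply to it. A union bound over a cover of candidate $\widehat G$'s, of log-cardinality $\asymp(\log n)^{d+1}$, is hopeless for a lower bound. You need $p_\star$ in the fluctuation, $W_n=\sqrt n(\E_n-\E_\star)(p_z/p_\star)$. All $\widehat G$-dependence then moves into $\E_n[(p_z/p_\star)f_{n,\widehat G}]$, which only needs an upper bound uniform over the Hellinger ball.

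\textbf{The upper bound is not routine.} Your crude bound ignores the centering $\nabla_z\!\int p_z=0$, so it lacks the factor $n^{-1/2}$ that your lower bound carries. Hence $\log(M/m)\gtrsim\log n$; even $R\sqrt{\log n}=B\log n/\sqrt{\log\log n}$ alone exceeds $B^2L_n$. The Jensen bound is then of order $(\log n)^d$ rather than $L_n^d$.

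Rebalancing radii cannot recover the missing $\log\log n$. The true gradient has size about $e^{\|z\|^2/2-O(\|z\|)}$, so making $\log(\rho/R)\asymp\log\log n$ costs $\rho^2\gg\log n$ in the numerator. Shrinking the inner sphere is also not an option, since the correction becomes negligible only on spheres of radius $\gtrsim\sqrt{L_n}$.

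The paper instead uses one ball $B_R\supset K$ with $\rho=4R$. It proves $M_{4R}\le e^{CR^2}$ in the same $\sqrt n$-normalization by running the decomposition again. This uses the maximum principle, a moment bound on $\sup\|D_zW_n\|$, and the correction bounds at radius $4\sqrt d\,R$, which need $\sqrt n\,e^{-cR^2\log R}\le 1$. The result is $\log(M_{4R}/m_R)=O(R^2)$ and a count of $O(B^{2d}L_n^d)$.

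\textbf{The net argument needs a stochastic Lipschitz bound.} The Lipschitz constant from the deterministic supremum is polynomially large in $n$ relative to the $O(1)$ normalized gradient. The net therefore has $n^{\Omega(1)}\eps^{-(2d-1)}$ points, which overwhelms the pointwise small-ball probability $O(\eps^{2d})$. That small-ball bound is itself available only above a resolution $\eps\gtrsim n^{-1/6}e^{CR^2}$. The paper instead:
\begin{itemize}
\item bounds $\E\Lambda_R^2\le CR^{4d+8}$ via a Sobolev inequality;
\item uses that the small-ball exponent $2d$ exceeds the sphere dimension $2d-1$;
\item peels in $\|\Ima z\|$, because the covariance degenerates near the real slice.
\end{itemize}
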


We note that the bound $O(L_n^d)$ cannot be improved in general, since this is the fewest number of components needed for the existing statistical guarantees of the NPMLE. Indeed, \cite[Theorem~2.1]{SahaGuntuboyina2020} (see also Lemma \ref{lem:universal-H}) shows that the squared Hellinger error
$H^2(p_{\widehat G},p_{G_\star}) :=
\int (\sqrt{p_{\widehat G}} - \sqrt{p_{G_\star}})^2$ is at most $ O_{\Prob}((\log n)^{d+1}/n)$, while \cite[Proposition 6]{MaWuYang2025} shows that there exists a mixing distribution $G_\star$ with density on $K$ such that 
the best approximation error of approximating $p_{G_\star}$ by finite Gaussian mixtures is at least
\begin{align*}
\inf_{G \in \calP_s(K)} H^2(p_{G_\star}, p_G) \geq \exp\Big(-Cs^{1/d}\log(s+1)\Big)
\end{align*}
for constants $C > 0$ that only depend on $K$, where $\calP_s(K)$ denotes the collection of $s$-atomic distributions supported on $K$. Thus for this $G_\star$ we must have $|\supp(\widehat G)| = \Omega_{\Prob}(L_n^d)$.
On the other hand, 
\cite[Proposition 5]{MaWuYang2025} shows that any mixture $p_{G_\star}$ with $G_\star\in \calP(K)$ can be approximated by a finite mixture with $O(L_n^d)$ components within total variation $o(n^{-1})$ so that the corresponding joint distributions on the $n$ observations are indistinguishable. In this sense, \prettyref{eq:tail-continuous-prior} gives the \textit{right} model complexity as there is little statistical benefit to fit a more complex mixture model.

Next, if the true prior $G_\star$ has a finite support, the following result states that NPMLE exhibits a strong adaptation property structurally; namely, its support size is bounded in probability.

\begin{theorem}[Structural adaptation]
\label{thm:main}
Let $G_\star\in\cP(K)$ be fixed and finitely supported.  Then, as $n\to\infty$,
\begin{equation}\label{eq:OP-finite-prior}
|\supp(\widehat G)|=O_{\Prob}(1).
\end{equation}
\end{theorem}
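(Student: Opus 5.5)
The plan is to follow the route sketched in the abstract: control the number of atoms of $\widehat G$ by the number of critical points of the KKT function. By the first-order optimality conditions, every atom $\theta\in\supp(\widehat G)$ satisfies $D(\theta)=0$, where
\[
D(\theta):=\Expect_n\Big[\frac{p_\theta}{p_{\widehat G}}\Big]-1\le 0 \quad\text{on } K .
\]
Interior atoms are therefore local maxima, hence critical points of $D$. Atoms on faces of $K$ are critical points of the restriction of $D$ to the corresponding face, and there are finitely many faces. So it suffices to bound the number of zeros of $\nabla D$ (and of its face restrictions) in a neighborhood of $K$. The function $D$ extends to an entire function of $\theta\in\C^d$, since $p_\theta(x)=\varphi(x)e^{\langle x,\theta\rangle-\|\theta\|^2/2}$. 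For such a map I would use a multivariate Jensen-type bound: the number of isolated zeros of $\nabla D$ in a polydisc of radius $r$ around $K$ is at most of order $\big(\log\frac{\sup_{\|z\|\le R}\|\nabla D(z)\|}{\inf_{\|z\|= r'}\|\nabla D(z)\|}\big)^d$, with suitable $R>r'>r$. Non-isolated zero sets cannot occur inside $K$ except on measure-zero events, which can be handled by a perturbation or limit argument. The upper bound on $\sup\|\nabla D\|$ over a complex ball of constant radius $R$ is routine. We have $|p_z(x)|\le e^{O(R\|x\|+R^2)}p_0(x)$, and $p_{\widehat G}(X_i)\gtrsim e^{-\|X_i\|M\sqrt d}p_0(X_i)$. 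Since $\max_i\|X_i\|=O(\sqrt{\log n})$ w.h.p., this gives $\log\sup\|\nabla D\|=O(\sqrt{\log n})$ in general. For a fixed finite $G_\star$, however, we need an $O(1)$ bound in probability. I would obtain it by replacing the sup with a typical value, e.g.\ by averaging over radii, or by using Jensen's formula with the mean of $\log|\cdot|$ on spheres, whose expectation is $O(1)$.

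The main step, and the expected obstacle, is the uniform lower bound on $\|\nabla D\|$ on a complex sphere. I would decompose
\[
D(z)=(\Expect_n-\Expect_\star)\Big[\frac{p_z}{p_{\widehat G}}\Big]+\Expect_\star\Big[p_z\Big(\frac{1}{p_{\widehat G}}-\frac1{p_\star}\Big)\Big].
\]
The second expression uses $\Expect_\star[p_z/p_\star]=1$, so the constant cancels. The first term is an empirical fluctuation. To keep it from depending on $\widehat G$, I would replace $p_{\widehat G}$ by $p_\star$ up to the relative error and use small-ball probability estimates: for each fixed $z$, the random variable $\sqrt n(\Expect_n-\Expect_\star)[\nabla_z p_z/p_\star]$ is approximately a nondegenerate Gaussian. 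It is therefore unlikely to be smaller than $\delta/\sqrt n$, and a net argument over the sphere extends this uniformly, up to polynomially small losses. The second term is the relative density estimation error. For a finitely supported $G_\star$, the Hellinger rate $H^2(p_{\widehat G},p_\star)=O_\Prob(\log^{d+1} n/n)$ from Lemma~\ref{lem:universal-H} shows that this term is of order $n^{-1/2}$ up to polylogarithmic factors. Polynomial approximation, via Taylor expansion of $e^{\langle x,z\rangle}$ in $z$, then shows that it is a low-degree polynomial-like function of $z$.

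The difficulty is that the two terms are of comparable size, so a crude triangle inequality does not yield a lower bound. I would first try to exploit the randomness of the fluctuation term at a randomly chosen radius $r'$, and to argue that the deterministic-ish second term can cancel it only on a small set of spheres. I expect this anti-concentration, together with the uniformity over the sphere, to be the crux.

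Combining the pieces, the Jensen ratio has logarithm $O_\Prob(1)$ when $G_\star$ is finitely supported, hence $|\supp(\widehat G)|=O_\Prob(1)$. The log-ratio constants depend on $G_\star$ through the nondegeneracy of the fluctuation covariance and the bounded polynomial complexity of $1/p_\star$, which is acceptable here since $G_\star$ is fixed.
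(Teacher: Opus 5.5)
Your skeleton matches the paper's: atoms as critical points of the complexified KKT function and of its face restrictions, the multivariate Jensen bound via $\sup$ and $\inf$ of $\norm{\nabla F}$, a split into an oracle fluctuation $W_n(z)=\sqrt n(\E_n-\E_\star)[p_z/p_\star]$ plus a relative estimation-error term, and small-ball estimates with a net on the sphere. But the step you flag as the crux is left open, and the mechanism you suggest cannot close it. At a fixed radius both terms are of order $n^{-1/2}$. The error term depends on $\widehat G$, which is built from the same sample, so anti-concentration of the fluctuation (at a random radius or not) does not rule out cancellation.

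The paper breaks the tie by a separation of scales in the radius. Normalize gradients by $w_z=\sqrt{r_\star(2\Rea z)}\,e^{-\norm{z}^2/2}$, the inverse $L_2(p_\star)$-size of $p_z/p_\star$. Then $\inf_{\mathcal S_R}\norm{w_z\nabla W_n(z)}\ge e^{-2bR}$ with probability $1-Ce^{-cR}$. On $\{\sqrt n\,H(p_{\widehat G},p_\star)\le h\}$, the normalized error term is at most $Che^{-cR}+n^{-c}$ on $\mathcal S_R$. So a large but fixed $R$ makes the fluctuation dominate.

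That decay is the real content, and it does not follow from Hellinger closeness plus a Taylor expansion of $e^{z\cdot x}$. Hellinger control alone allows Hermite mass at degree $R^2$: for $p_\star=\varphi$, $d=1$, $r=R^2$, the function $u=hH_r/\sqrt{r!}$ has $\E_\star u^2=h^2$ but normalized gradient $\asymp h\sqrt R$ on $\mathcal S_R$. The paper needs two further ingredients:
(i) A Hermite split $r_G-r_\star=P_m+R_m$ with $m=\kappa R^2$, $\kappa$ small, $\norm{P_m}_{L_2(\varphi)}\lesssim He^{C\sqrt m}$, and the \emph{relative} remainder bound $\norm{R_m}_{L_2(\varphi)}\lesssim He^{-cm\log m}$. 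This uses finite support of $G_\star$, through control of higher moment differences by the first $2s$.
(ii) A way to handle $P_m/r_G$, where dividing by $r_G$ destroys the polynomial structure. The paper shifts the contour by $i\tau_K\Ima z/\norm{\Ima z}$ inside a strip where $r_G$ is zero-free, which produces a factor $e^{-\tau_K\norm{\Ima z}}$. When $\norm{\Rea z}$ is large it instead uses the translation bound $\norm{P(\cdot+a)}_{L_2(\varphi)}\le e^{\norm{a}\sqrt m}\norm{P}_{L_2(\varphi)}$.

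Three further steps fail as stated. First, the polylogarithmic rate of Lemma~\ref{lem:universal-H} cannot give $O_\Prob(1)$. With $h\asymp(\log n)^{(d+1)/2}$, the requirement $he^{-cR}\ll e^{-2bR}$ forces $R\gtrsim\log\log n$, and the argument then only yields a count of order $(\log\log n)^{2d}$. The paper uses the parametric rate $\sqrt n\,H(p_{\widehat G},p_\star)=O_\Prob(1)$ for fixed finitely supported $G_\star$ \cite{ZhangVolgushev2026}, so $h$ and then $R$ can be chosen independent of $n$.

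Second, your upper bound is at the wrong scale. Since $\inf_{\mathcal S_R}\norm{\nabla D}$ is of order $n^{-1/2}$, you need $\sup\norm{\nabla D}=O_\Prob(n^{-1/2})$ on the larger polydisc, not $O_\Prob(1)$. Otherwise $\log(M/m)\asymp\log n$ and Jensen gives only $(\log n)^d$; averaging over radii does not repair this. The paper obtains $\sup_{\mathbb D^d_{4R}}\norm{\nabla(\sqrt n D)}\le he^{CR^2}$ from the same decomposition. It combines the maximum principle, a Sobolev-type second-moment bound on the Lipschitz constant of the normalized fluctuation, and the Hellinger-based bound on the error term.

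Third, replacing $p_{\widehat G}$ by $p_\star$ in your fluctuation term creates the cross term $(\E_n-\E_\star)[p_z(1/p_{\widehat G}-1/p_\star)]$. This must be controlled uniformly over the Hellinger ball, which the paper does by truncation, Bernstein, and a sup-norm cover of mixture densities. You mention it only in passing.
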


Some further remarks on these results are in order:
\begin{itemize}
    \item The proofs of Theorems 
\ref{thm:continuous-prior} and \ref{thm:main} share the same backbone outlined in \prettyref{sec:proof-outline}, with the latter relying additionally on the adaptive Hellinger rate $H^2(p_{\widehat G}, p_{G_\star}) = O_{\Prob}(1/n)$ due to \cite{ZhangVolgushev2026}.

\item Both theorems concern NPMLE on the hypercube $K=[-M,M]^d$. This choice is primarily for convenience. 
In fact, the bound in \prettyref{thm:continuous-prior} applies to the number of NPMLE atoms in the interior of any compact $K$. The hypercube simplifies the treatment of boundary cases, since each face
is itself a cube of lower dimension.
\end{itemize}

We end this subsection with numerical experiments on NPMLE. We consider three population mixing distributions in dimensions $d=1,2,3$:
(a) a single atom;
(b) two atoms with equal weights;
(c) uniform
distribution. 
\prettyref{fig:support-pmf-three-priors} shows the empirical frequency of the NPMLE support size over 500 independent trials for sample size $n=600$. In these experiments, the fitted model size (in terms of both the mode and the mean) increases with the dimension and with the complexity of the true mixing distribution. For the finite mixture $p_\star$ in (a) and (b), \prettyref{thm:main} shows that $|\supp(\widehat G)|$ is bounded in probability. Determining whether it converges in distribution and how the limiting law  depends on $G_\star$ is an interesting open question.

\begin{figure}[htp]
\centering
\captionsetup[subfloat]{justification=centering}
\subfloat[Single Gaussian: $G_\star=\delta_0$.\label{fig:support-pmf-single-gaussian}]{\includegraphics[width=\textwidth]{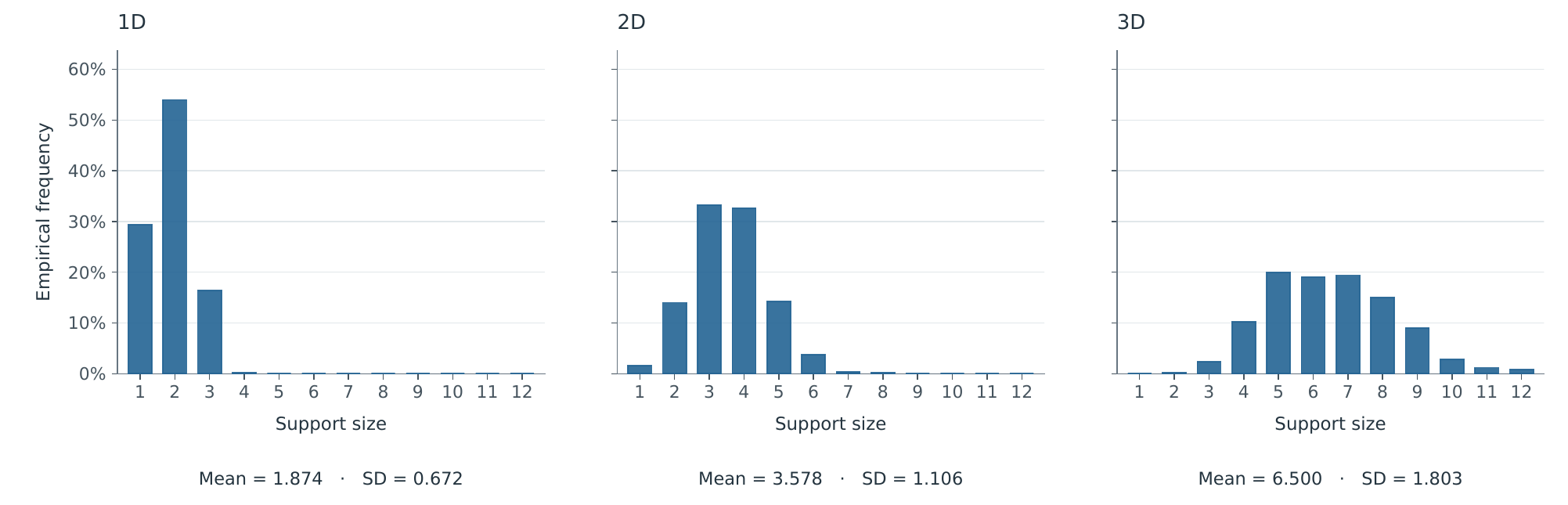}}
\par\medskip
\subfloat[Two-component Gaussian mixture: $G_\star=\frac12\delta_{-\theta_\star}+\frac12\delta_{\theta_\star}$, where $\|\theta_\star\|=1$.\label{fig:support-pmf-two-component}]{\includegraphics[width=\textwidth]{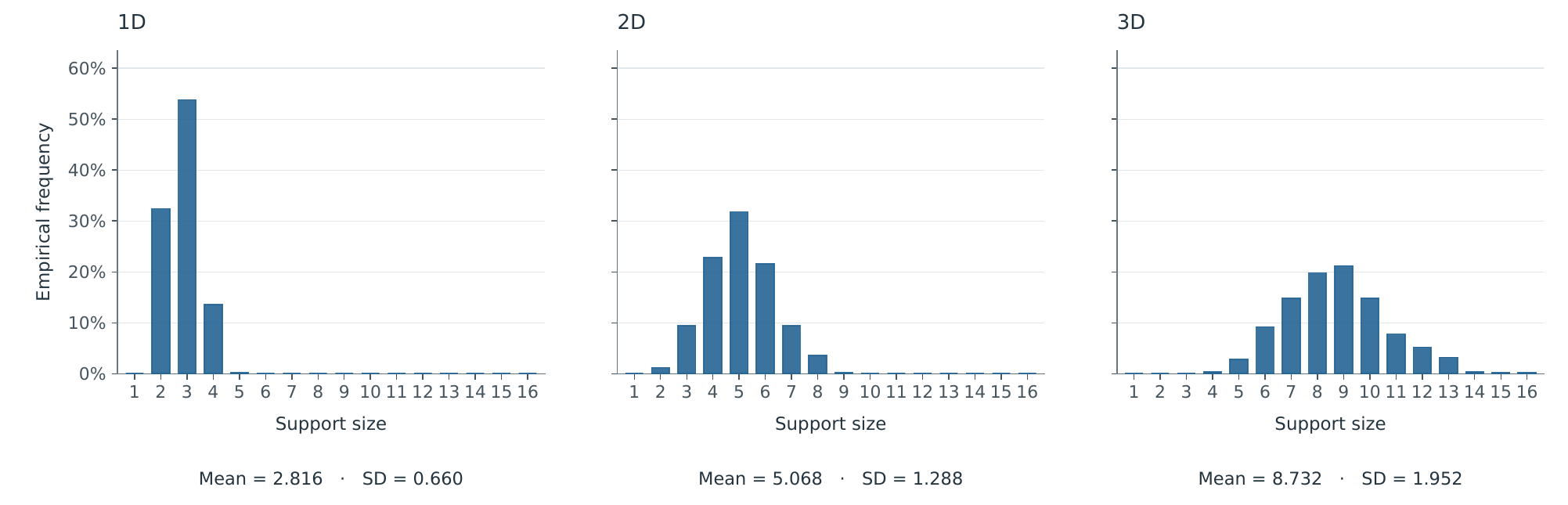}}
\par\medskip
\subfloat[Continuous prior: $G_\star=\operatorname{Unif}((-1,1)^d)$.\label{fig:support-pmf-uniform}]{\includegraphics[width=\textwidth]{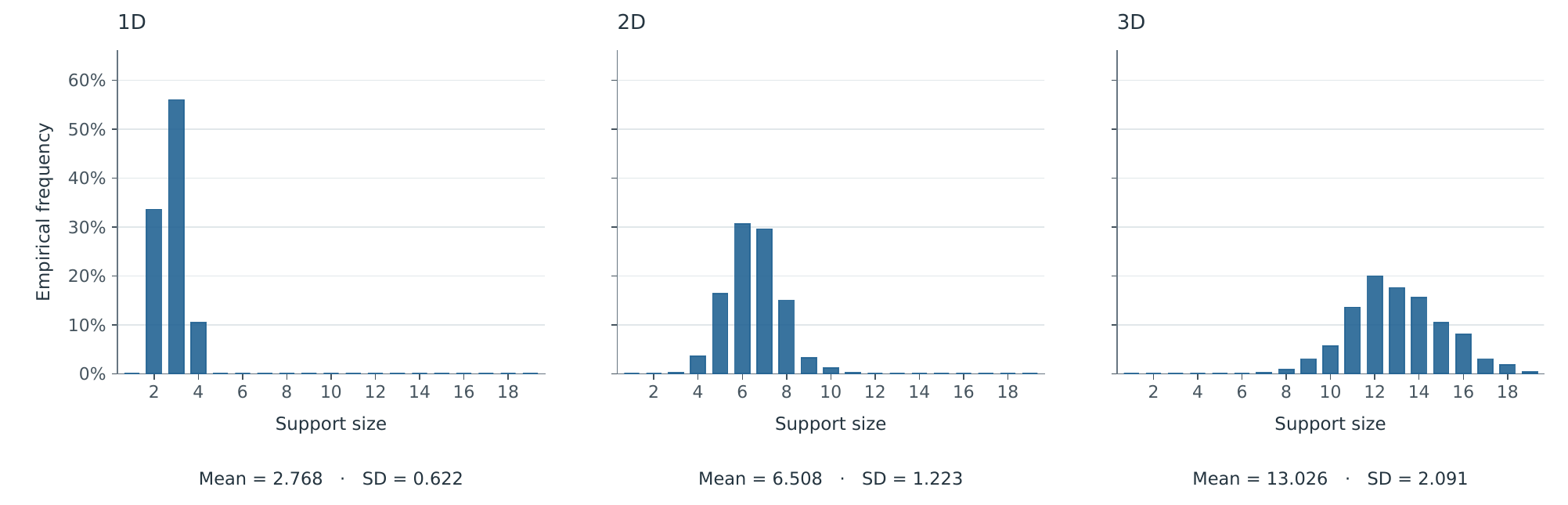}}
\caption{Empirical frequency of the support size of NPMLE computed using the Frank--Wolfe algorithm over the domain $K=[-2,2]^d$ in dimensions $d=1,2,3$. Each panel is based on $T=500$ independent trials with $n=600$ iid observations drawn from $p_\star=G_\star*N(0,I_d)$. The mean and sample standard deviation are shown below each panel. 
We apply a variant of Frank--Wolfe \cite{DenoyelleDuvalPeyreSoubies2020} to speed up convergence.}
\label{fig:support-pmf-three-priors}
\end{figure}

\subsection{Prior and related work}

In terms of proof techniques, both the prior work \cite{PolyanskiyWu2020} in one dimension and the current paper are based on the first-order optimality condition involving the directional derivative of the log likelihood, which we call the \textit{KKT function} (see \prettyref{eq:KKT-function} below), whose global maxima contain the atoms of the NPMLE.
Both works 
use complex-analytic tools to bound the number of critical points of the KKT function in a complex domain containing the support of the NPMLE.
The major difference is that  \cite{PolyanskiyWu2020} applies a one-dimensional Jensen-type inequality (see \prettyref{eq:1DJensen}) to obtain a \textit{deterministic} upper bound of $O((X_{\max}-X_{\min})^2)$, where $X_{\max}$ and $X_{\min}$ are the maximum and minimum of the observations $X_1,\dots,X_n$, respectively. Moreover, as the construction in \cite[Remark 4]{polyanskiy2020note} shows, this deterministic approach provably falls short of showing the stronger $O(\frac{\log n}{\log\log n})$ bound for constrained NPMLE.

Indeed, the proof in the present work is probabilistic, based on decomposing the gradient of the KKT function into two stochastic terms (see  \prettyref{eq:decomposition}). At a technical level, extending the Jensen-type bound on the number of critical points to multiple dimensions (see \prettyref{prop:jensen-critical-count}) requires a high-probability lower bound on the gradient norm \textit{uniformly} on a complex sphere.
In comparison, the one-dimensional Jensen bound in \cite{PolyanskiyWu2020} only needs a lower bound on the KKT derivative at a \emph{single} point, which can be obtained deterministically from monotonicity.

The crux of the proof is bounding the number of critical points of the KKT function, which is a superposition of $n$ Gaussians each centered at a data point (cf.~\prettyref{eq:KKT-function}). This is reminiscent of the results of \cite{mammen1995qualitative,geshkovski2024number} on the number of modes of a kernel density estimator
\begin{equation} \label{eq:KDE}
f_h(\theta) = \frac{1}{nh^d}\sum_{i=1}^n K\pth{\frac{X_i - \theta}{h}}
\end{equation}
where $K$ is a smooth kernel (e.g.~Gaussian) and $h$ is the bandwidth. This problem is amenable to the Kac-Rice formula which expresses the expected number of critical points in terms of the joint distribution of the first and second derivatives of $f_h(\theta)$ (which is approximately Gaussian). This strategy seems difficult to implement here, because unlike \prettyref{eq:KDE} which has the equal weight $\frac{1}{n}$, the KKT function involves the data-driven weights $1/p_{\widehat G}(X_i)$ determined by the solution of the optimization problem \prettyref{eq:likelihood}.

Finally, we mention the very recent work  \cite{jiang2026polylogarithmic} which studies a variant of the NPMLE (\ref{eq:NPMLE-intro}) whose objective function replaces the equal sample weights by random (Gamma) weights, and shows that the associated maximizer has cardinality $O(L_n^d)$ with high probability with respect to both the sample and the random weights. In comparison, the present paper concerns the ordinary NPMLE and 
establishes the $O_{\Prob}(L_n^d)$ support bound in general and $O_{\Prob}(1)$ for finite mixtures. The proof techniques do not appear related.

\paragraph{Notations}

We use $\norm{\cdot}$ without a subscript for Euclidean vector norms and function-space subscripts for function norms, such as $\norm{f}_{L_2(\varphi)}$ and $\norm{f}_{\infty}$. 
Denote the Euclidean ball of radius $R>0$ by 
$B_R=\{z\in\C^d:\norm{z}\leq R\}$ and corresponding sphere by
\begin{equation}
 \mathcal S_R=\{z\in\C^d:\norm{z}=R\}.
 \label{eq:shell}
\end{equation}
Let
$\mathbb D_R^d=\{z\in\C^d:|z_j|\le R,\ 1\le j\le d\}$ be the Cartesian product of $d$ closed disks of radius $R$.

We use the Hellinger convention $H(p,q)^2=\int(\sqrt p-\sqrt q)^2$.
For each integer $s\ge1$, let $\cP_s(K)$ denote the class of mixing distributions supported on at most $s$ points.
Denote the standard normal distribution by $P_0=N(0,I)$ with density $\varphi$, and write $\Expect_0 f=\int f\varphi$.  Define the density ratio
\begin{equation}\label{eq:rG}
r_G(x):=\frac{p_G(x)}{\varphi(x)}
=\int_K\exp\{\theta\cdot x-\norm{\theta}^2/2\} G(\d\theta),
\qquad r_\star=r_{G_\star}.
\end{equation}
Denoting $\gamma_K:=\sup_{\theta\in K}\norm{\theta}$, uniformly over $G\in\cP(K)$, we have
\begin{equation}\label{eq:r-bound}
e^{-\gamma_K\norm{x}-\gamma_K^2/2}\le r_G(x)\le e^{\gamma_K\norm{x}}, \qquad x\in\R^d.
\end{equation}
The Gaussian density \prettyref{eq:kernel} may be extended holomorphically for complex $z \in \complex^d$ as
\begin{equation}\label{eq:pz}
p_z(x)=\varphi(x-z)=(2\pi)^{-d/2}\exp\{-(x-z)\cdot (x-z)/2\},
\end{equation}
where for $z_1,z_2 \in \C^{d}$, $z_1 \cdot z_2$ denotes the complex bilinear product (without conjugate). Define the likelihood ratio of \prettyref{eq:pz} over the true density as:
\begin{equation}\label{eq:Lz}
L_z(x) := \frac{p_z(x)}{p_\star(x)} = \frac{\exp\{z\cdot x-z\cdot z/2\}}{r_\star(x)}.
\end{equation}
Then $\Expect_\star L_z=1$ for every $z\in\C^d$.

\paragraph{AI methodologies statement}
Our proof was developed with the assistance of GPT-5.6 Pro and GPT-6 Pro models. Specifically,  the authors proposed the KKT decomposition \prettyref{eq:decomposition} motivated by local asymptotics and sought a multivariate version of the Jensen-type bound; AI was helpful in identifying a precursor of this result. Building on this program, the current proof strategy emerged after many iterations between the authors and the AI. Notably, the current Hermite approximation argument in \prettyref{sec:shell} is inspired by the first author's ongoing work on adaptive density estimation, and AI suggested applying the same relative error bound (summarized in \prettyref{prop:Hermite-approx}) to sharpen the upper bound in \prettyref{lem:population-shell}. The authors subsequently simplified the proofs substantially, including a clean formulation of \prettyref{prop:jensen-critical-count}, an anti-concentration argument in \prettyref{sec:centered-gradient} relying only on the gradient of the KKT function (without using its value), and a direct small-ball estimate in \prettyref{lmm:esseen} that avoids passing to the Gaussian limit and quantitative CLT. 
AI helped with the simulation for Figs.~\ref{fig:support-pmf-three-priors} and \ref{fig:kkt-flatness}.
The authors wrote the paper and take full responsibility for its contents.

\paragraph{Organization of the paper}
The rest of the paper is organized as follows. Section \ref{sec:KKT} introduces the optimality conditions and the KKT function. Section \ref{sec:proof-outline} presents the key decomposition of the KKT function and outlines the proofs of our main results. 
\prettyref{sec:discussions} provides further discussion of our results and open problems. 
The key ingredients for the proof are established in 
Sections \ref{sec:shell}--\ref{sec:growth} and assembled in \prettyref{sec:proofs} to complete the proofs of \prettyref{thm:continuous-prior} and \prettyref{thm:main}. Atoms on the boundary of the cube are dealt with in \prettyref{app:boundary}. Proofs of the auxiliary results are collected in \prettyref{app:aux}.

\section{KKT function and optimality condition}
\label{sec:KKT}

All structural results on NPMLE, both classical deterministic \cite{simar1976maximum,Jewell1982,Lindsay1983a} and more recent probabilistic results \cite{PolyanskiyWu2020}, build upon  the first-order optimality (KKT) condition, which we first describe. Then, we review a heuristic on local expansion of log likelihood, which motivates our proof technique in \prettyref{sec:proof-outline}.

For $G\in\cP(K)$, define the \emph{KKT function}
\begin{equation}\label{eq:KKT-function}
F_{n,G}(\theta):=\sqrt{n} \Big(\Expect_n \frac{p_\theta}{p_G} - 1\Big)
= \frac{1}{\sqrt{n}} \sum_{i=1}^n \pth{\frac{p_\theta(X_i)}{p_G(X_i)}-1}, \qquad \theta \in K
\end{equation}
which is the normalized version of directional derivative of the log likelihood.
The following lemma relates the atoms of the NPMLE to 
the zeros and critical points of $F_{n,\widehat G}$. See \prettyref{fig:kkt-flatness} for an illustration.

\begin{lemma}\label{lem:KKT}
For the NPMLE $\widehat G$, we have\footnote{Thanks to \prettyref{eq:NPMLE-intro} being a convex optimization problem, the first-order condition \prettyref{eq:KKT} is also sufficient and characterizes the optimality. See, e.g., \cite[Footnote 3]{PolyanskiyWu2020}.}
\begin{equation}\label{eq:KKT}
F_{n,\widehat G} \le 0\quad \text{on }K, \qquad
F_{n,\widehat G} = 0\quad\text{on }\supp(\widehat G).
\end{equation}
Consequently, for every atom $\theta$ of $\widehat G$:
\begin{itemize}
	\item (Interior) If $\theta \in \operatorname{int}K$, then
\begin{equation}\label{eq:interior-contact}
\nabla F_{n,\widehat G}(\theta)=0.
\end{equation}
\item (Boundary) If $\theta$ is in the relative interior of a face of $K$, whose free coordinates are indexed by a nonempty $S\subset [d]$, then $\partial_i F_{n,\widehat G}(\theta)=0$ for all $i \in S$.
\end{itemize} 
\end{lemma}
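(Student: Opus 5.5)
The plan is the standard first-order argument for concave maximization over a convex set of measures. Since $\widehat G$ maximizes $\ell_n$ over $\cP(K)$, a maximizer has $p_{\widehat G}(X_i)>0$ for all $i$; this is automatic, because $p_G>0$ everywhere for any $G$. For any $\theta\in K$ and $t\in[0,1]$, the mixture $G_t=(1-t)\widehat G+t\delta_\theta$ lies in $\cP(K)$, and $\ell_n(G_t)\le \ell_n(\widehat G)$.

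Differentiating at $t=0^+$ gives
\[
\frac{\d}{\d t}\ell_n(G_t)\Big|_{t=0^+}=\sum_i \frac{p_\theta(X_i)-p_{\widehat G}(X_i)}{p_{\widehat G}(X_i)}=\sqrt n\,F_{n,\widehat G}(\theta).
\]
This derivative must be $\le0$. The differentiation is justified because $t\mapsto\log((1-t)a+tb)$ is smooth on $[0,1]$ for $a,b>0$. This proves the first half of \prettyref{eq:KKT}.

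For the second half, integrate against $\widehat G$:
\[
\int F_{n,\widehat G}\,\d\widehat G=\sqrt n\Big(\Expect_n \frac{\int p_\theta \,\d\widehat G(\theta)}{p_{\widehat G}}-1\Big)=0
\]
by Fubini. Since $F_{n,\widehat G}\le0$ on $K$ and is continuous, which holds because $\theta\mapsto p_\theta(X_i)$ is continuous, the integral vanishing forces $F_{n,\widehat G}=0$ $\widehat G$-a.e. Continuity then upgrades this to $F_{n,\widehat G}=0$ on all of $\supp(\widehat G)$. Indeed, if $F(\theta_0)<0$ at a support point, then $F<0$ on a neighborhood, and that neighborhood has positive $\widehat G$-mass, a contradiction.

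For the consequences, the key fact is that $F_{n,\widehat G}$ extends smoothly in $\theta$ (a finite sum of Gaussians), and an atom $\theta$ is a maximizer of $F_{n,\widehat G}$ over $K$ with value $0$. If $\theta\in\operatorname{int}K$, it is an unconstrained local maximum of a differentiable function, so $\nabla F_{n,\widehat G}(\theta)=0$, giving \prettyref{eq:interior-contact}.

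If $\theta$ lies in the relative interior of a face with free coordinates $S$, then for each $i\in S$ the segment $\theta+se_i$ stays in that face, and hence in $K$, for $|s|$ small. So $s\mapsto F_{n,\widehat G}(\theta+se_i)$ has an interior local maximum at $s=0$, and $\partial_iF_{n,\widehat G}(\theta)=0$.

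There is no real obstacle. The only point requiring care is the step from "zero $\widehat G$-a.e." to "zero on the support", which needs the continuity of $F_{n,\widehat G}$ together with the definition of support as the set of points all of whose neighborhoods have positive mass.
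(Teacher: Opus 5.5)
Your proof is correct and follows essentially the same route as the paper's. The one-sided derivative along $(1-t)\widehat G+t\delta_\theta$ gives $F_{n,\widehat G}\le 0$ on $K$, and the identity $\int F_{n,\widehat G}\,\d\widehat G=0$ together with continuity gives equality on $\supp(\widehat G)$, with the interior and face conditions then following from local maximality; you simply spell out the continuity-to-support step that the paper compresses into ``by continuity.''
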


\begin{proof}
For $\theta\in K$, define
\[
 D_{\widehat G}(\theta) := \Expect_n \frac{p_\theta}{p_{\widehat G}}.
\]
Then $F_{n,\widehat G}=\sqrt n(D_{\widehat G}-1)$. For any $\theta \in K$ and $t\in[0,1]$, consider the perturbation $(1-t)\widehat G + t\delta_\theta\in\cP(K)$, hence optimality of $\widehat G$ implies that
\begin{align*}
\frac{\d}{\d t}\Big|_{t=0^+} \ell_n\big((1-t)\widehat G + t\delta_\theta\big) &= n \big(D_{\widehat G}(\theta) - 1\big) \leq 0.
\end{align*}
Moreover, $\int D_{\widehat G}\d\widehat G=1$. By continuity, we have $D_{\widehat G}=1$ on $\supp(\widehat G)$  and hence \prettyref{eq:KKT} follows.

Finally, an atom $\theta\in\operatorname{int}K$ is a local maximum of $F_{n,\widehat G}$, proving \prettyref{eq:interior-contact}. 
Suppose $\theta$ is in the relative interior of a face, where all coordinates are either $M$ or $-M$ except those in a nonempty $S\subset [d]$. Then the restriction of $F_{n,\widehat G}$ to that face has a local maximum at $\theta$, so $\partial_i F_{n,\widehat G}(\theta)=0$ for all $i \in S$.
\end{proof}

\begin{figure}[ht]
\centering
\captionsetup[subfloat]{justification=centering}
\subfloat[Single Gaussian: $X_i\sim N(0,1)$.\label{fig:kkt-single-gaussian}]{\includegraphics[width=0.48\textwidth]{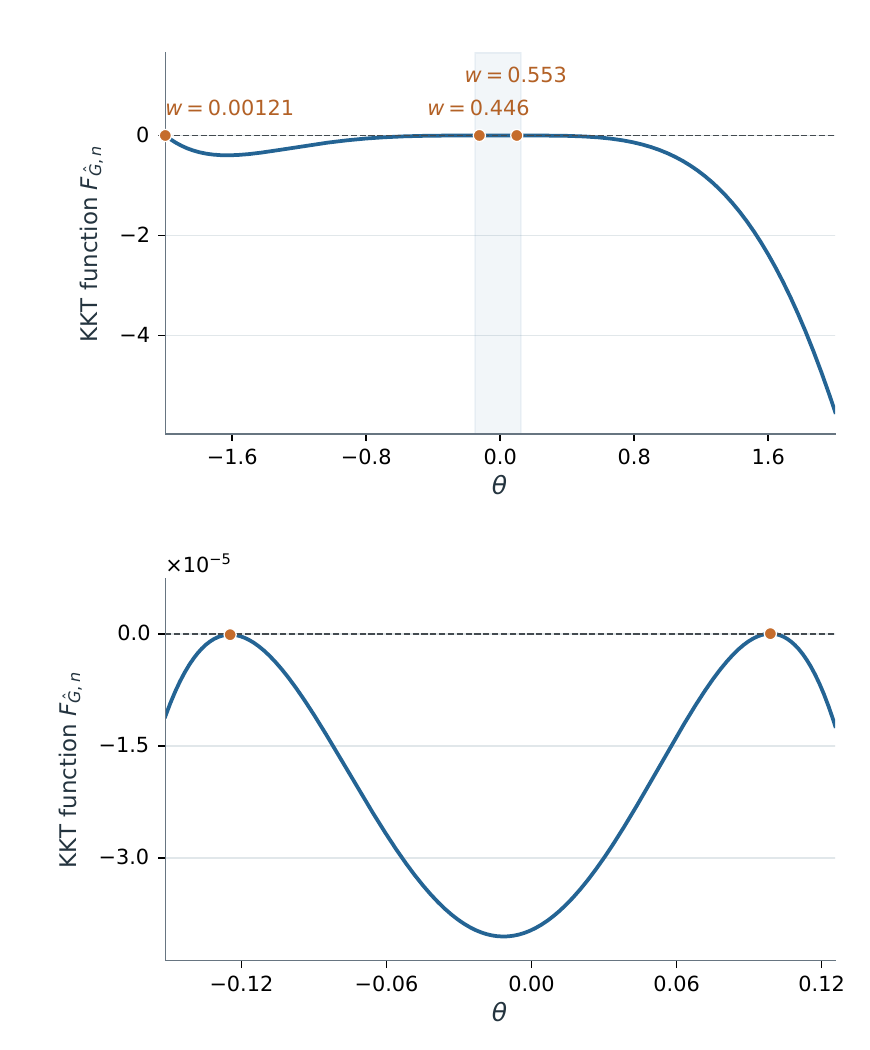}}
\hfill
\subfloat[Two-component Gaussian mixture: \mbox{$X_i\sim\frac12N(-1,1)+\frac12N(1,1)$.}\label{fig:kkt-two-component}]{\includegraphics[width=0.48\textwidth]{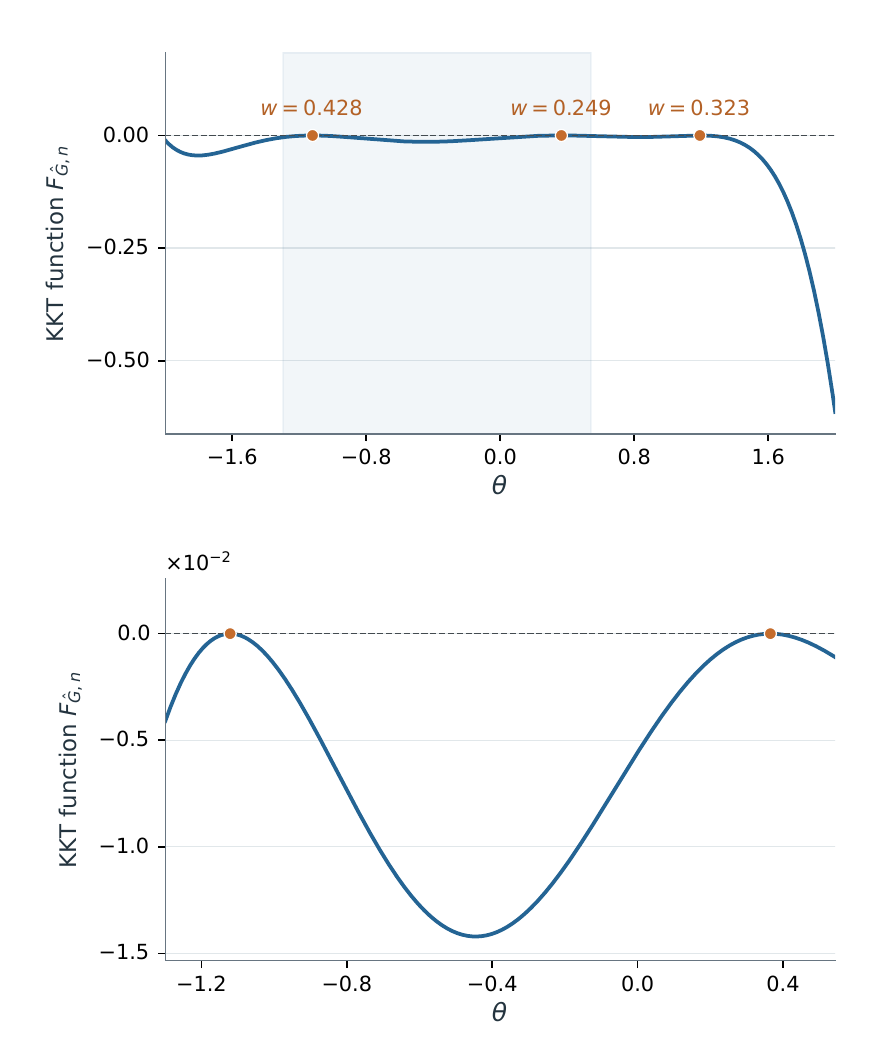}}
\caption{The KKT function $F_{n,\widehat G}$ for NPMLE $\widehat G$ computed using Frank--Wolfe for a sample of $n=10000$ observations in one dimension, together with atoms and weights of the NPMLE $\widehat G$. The upper panel shows the full domain $K=[-2,2]$ and the lower panel zooms into the shaded region, showing that some of the local maxima at the atoms may be exceedingly flat.}
\label{fig:kkt-flatness}
\end{figure}

\subsection{Heuristics from local asymptotics}
\label{sec:lan}

To motivate the proof of our main results, we review the heuristic for the classical
 locally quadratic asymptotics of likelihood (see, e.g., \cite[Lemma 3.1.11]{vdv-wellner}). 
Consider a local perturbation of the true density $p_\star$ of the form
\[
p_{G_{n,h}}=p_\star\left(1+\frac{h}{\sqrt n}\right)
\]
where $h\in \calH\subset L_2(p_\star)$ and $\calH$ consists of all admissible directions: $\Expect_\star h=0$ and $G_{n,h}\in\calP(K)$ exists for all sufficiently large $n$ (concretely, $\calH = \{h=a(p_G/p_\star-1), a\geq 0, G \in \cal P(K)\}$). Under appropriate assumptions, Taylor expansion gives
\begin{align*}
\ell_n(G_{n,h})-\ell_n(G_\star)
&=\sum_{i=1}^n\log\left(1+\frac{h(X_i)}{\sqrt n}\right)\\
&=\sqrt n(\Expect_n-\Expect_\star)h-\frac12\Expect_n h^2+o_{\Prob}(1)\\
&\todistr\ \mathbb{G}(h)-\frac12\norm{h}_{L_2(p_\star)}^2,
\end{align*}
the last step applying CLT and LLN to the first and the second term, respectively. Here $\mathbb{G}(h)$ is a centered Gaussian process indexed by $h\in \calH$, with covariance
$\Cov\big(\mathbb{G}(h_1),\mathbb{G}(h_2)\big)
=\Expect_\star[h_1h_2]$.

The same calculation applies to the KKT function $F_{n,G}$. For fixed $\theta\in \text{int} K$, using $\Expect_\star L_\theta=1$, we obtain
\begin{align*}
F_{n,G_{n,h}}(\theta)
&=\sqrt n\left(\Expect_n\left[\frac{L_\theta}{1+h/\sqrt n}\right]-1\right)\\
&=\sqrt n(\Expect_n-\Expect_\star)L_\theta-\Expect_n[hL_\theta]+o_{\Prob}(1) \numberthis\label{eq:CLT_LLN}\\
&\todistr\ \mathbb{G}(L_\theta-1)-\Expect_\star[hL_\theta].
\end{align*}

As outlined in the next section, our proof applies the same decomposition of the KKT function into a ``CLT term'' and an ``LLN term''. However, 
to count the critical points, we neither seek to establish these asymptotics nor rely on them, as our goal is to obtain results that are applicable to both worst-case and fixed $G_\star$ in \Cref{thm:main,thm:continuous-prior}.

\section{Proof outline}\label{sec:proof-outline}

In this section we outline the proof of our main results. By \prettyref{lem:KKT}, every atom of the NPMLE $\widehat G$ in $\operatorname{int}K$ is a critical point of the corresponding KKT function $F_{n,\widehat G}$, that is, a zero of its gradient. 
To count its critical points, we first need to extend \prettyref{eq:KKT-function} to the complex domain 
\begin{equation*}
F_{n,G}(z):=\sqrt{n} \Big(\Expect_n \frac{p_z}{p_G} - 1\Big), \quad z \in \C^d
\end{equation*}
where $p_z$ is the Gaussian density extended to $\C^d$ given in \prettyref{eq:pz}. The following $d$-dimensional Jensen-type bound, which is essentially a special case of \cite[Theorem 1.7]{BPPSS2024} and proved in \prettyref{app:critical-count} for completeness, controls the number of critical points in terms of the maximum and minimum norm of the gradient on complex spheres.

\begin{proposition}\label{prop:jensen-critical-count}
There exists $C > 0$ only depending on $d$ such that the following holds. Let $R>0$, $\rho\ge2R$, and let $F$ be holomorphic in an open neighborhood of $\mathbb D_\rho^d$.  Define
\begin{equation}\label{eq:jensen-gradient-data}
M_\rho = \sup_{z \in \mathbb D_\rho^d}\norm{\nabla F(z)},
 \qquad m_R = \inf_{z\in\mathcal S_R}\norm{\nabla F(z)}.
\end{equation}
If $m_R>0$, then
\begin{equation}\label{eq:jensen-critical-count}
 \#\{z\in B_R:\nabla F(z)=0\}
 \le C\left[1+
   \frac{\log(M_\rho/m_R)}{\log(\rho/R)}\right]^d,
\end{equation}
where $M_\rho\ge m_R$ because $\mathcal S_R\subset\mathbb D_\rho^d$.
\end{proposition}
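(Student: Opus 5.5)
We count the common zeros of the $d$ holomorphic functions $g_j=\partial_j F$, $j=1,\dots,d$, in $B_R$. The natural route is to reduce to one complex variable by slicing along complex lines through the origin. Then we apply the classical Jensen formula on each slice, and integrate over directions to recover a count of isolated zeros of the map $g=(g_1,\dots,g_d)$. The cleanest version of this is the multivariate Jensen/Crofton-type estimate of \cite{BPPSS2024}. Our plan is to reproduce the special case needed here rather than invoke it as a black box.

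\textbf{Step 1: reduction to a dimension-free scale.} Replacing $F$ by $F(\lambda\cdot)$ rescales both $M_\rho$ and $m_R$ by the same factor $\lambda$, so the ratio $M_\rho/m_R$ is unchanged. We may therefore take $R=1$ and $\rho\ge 2$.

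\textbf{Step 2: control of $\log\|g\|$.} Consider the plurisubharmonic function $u=\log\|g\|$. It satisfies $u\le \log M_\rho$ on $\mathbb D_\rho^d$, and $u\ge\log m_R$ on $\mathcal S_R$.

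\textbf{Step 3: counting zeros via Bezout-type intersections.} Zeros of $g$ inside $B_R$ are counted through intersection multiplicity. We intend to bound this via a Bezout-type argument. We approximate each $g_j$ on $\mathbb D_{\rho/2}^d$ by its Taylor polynomial of degree
\[
N\asymp 1+\frac{\log(M_\rho/m_R)}{\log(\rho/R)}.
\]
By Cauchy estimates on the polydisk $\mathbb D_\rho^d$, the tail beyond degree $N$ on $B_{2R}$ is at most $M_\rho\, C^d (2R/\rho)^{N}$. By choice of $N$ this is smaller than $m_R/2$. On $\mathcal S_R$ the polynomial map $P=(P_1,\dots,P_d)$ thus satisfies $\|g-P\|<\|g\|$.

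A multivariate Rouché theorem, applied via the homotopy $g_t=P+t(g-P)$, which never vanishes on $\partial B_R$, shows that $g$ and $P$ have the same number of zeros in $B_R$, counted with multiplicity. The zeros of $P$ are isolated there, because the zero set of $g_t$ cannot meet the compact boundary and hence its components inside the ball are compact analytic sets, i.e.\ finite. By Bezout, $P$ has at most $N^d$ isolated zeros. This yields \eqref{eq:jensen-critical-count} with $C$ depending on $d$.

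\textbf{Main obstacle.} The step we expect to be hardest is the Rouché/degree argument in $\C^d$. One must ensure that positive-dimensional zero components of $g$ or $P$ cannot meet $B_R$ and spoil the count. Since the quantity counted is $\#\{z\in B_R:\nabla F(z)=0\}$, the bound must exclude such components. The observation above handles this: any positive-dimensional analytic component through a point of $B_R$ would be noncompact in the ball and hence would reach $\mathcal S_R$, contradicting $m_R>0$. For $P$, note that $P$ is nonvanishing on $\mathcal S_R$. Positive-dimensional components of an affine algebraic set meeting the open ball must exit through the sphere, by the maximum principle applied to coordinate functions. So all zeros of $P$ in $B_R$ are isolated, and the local degree sum equals the Bezout-bounded isolated count.

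If some zeros of $g$ have high multiplicity, the set count is only smaller. This completes the plan.
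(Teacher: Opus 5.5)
This is essentially the paper's proof: your Step 3 is exactly its argument, namely Cauchy-estimate truncation of the Taylor series of $\nabla F$ on the polydisk, then multivariate Rouch\'e on the ball (finiteness coming from compact analytic subsets of an open set being finite), then B\'ezout for the polynomial approximant; the slicing/Jensen preamble and Step 2 play no role. One slip needs fixing: estimate the tail only on $B_R$, which suffices since Rouch\'e only needs $\mathcal S_R$. There $|z_j|\le R$ gives a bound $C_dM_\rho(N+d)^{d-1}(R/\rho)^{N+1}$. Your bound on $B_{2R}$ with ratio $(2R/\rho)^N$ gives no decay at all when $\rho=2R$, and in general it leads to the wrong denominator $\log(\rho/2R)$. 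In addition, the factor $(N+d)^{d-1}$ from counting multi-indices, which you dropped, must be absorbed by taking $N\ge N_0$ with $N_0$ large and using $\rho/R\ge2$, as the paper does.
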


We apply \prettyref{prop:jensen-critical-count} with $F=F_{n,\widehat G}$ and $\rho=4R$, choosing $R$ large enough so that $K\subset B_R$. It remains to bound $M_{4R}$ from above and $m_R$ from below. The upper bound for $M_{4R}$ is relatively straightforward: For radius $R$ chosen below, we show in \prettyref{sec:growth} that $
M_{4R}\le e^{CR^2} 
$ with high probability, which follows from the Lipschitz estimate in \prettyref{sec:lip} and the Hellinger density estimation error bounds in \prettyref{sec:shell}. The lower bound for $m_R$ is the main difficulty, and we sketch below the main steps.

The first step is the following decomposition of $F_{n,G}$ that is essentially the same as \prettyref{eq:CLT_LLN}: 
\begin{align}\label{eq:decomposition}
F_{n,G}(z) &= \sqrt{n} \Big(\Expect_n \frac{p_z}{p_G} - 1\Big) \notag \\
&= \sqrt{n} 
\Expect_n\Big(\frac{p_z}{p_\star} - 1 \Big) + \Expect_n \Big(\frac{p_z}{p_\star}\cdot \sqrt{n}\frac{p_\star-p_G}{p_G}  \Big) \notag \\
&= \sqrt{n} 
(\Expect_n-\Expect_\star)\Big(\frac{p_z}{p_\star} \Big) + \Expect_n \Big(\frac{p_z}{p_\star}\cdot \sqrt{n}\frac{p_\star-p_G}{p_G}  \Big) \notag \\
&=: W_n(z) - T_nf_{n,G}(z), 
\end{align}
where we have used that
$L_z=p_z/p_\star$ defined in (\ref{eq:Lz}) satisfies
 $\E_\star[L_z]=\int p_z=1$, and
\begin{align}
     W_n(z) &:=\sqrt n(\Expect_n-\Expect_\star)L_z, \label{eq:Wn} \\
     T_nf(z) &:= \Expect_n [fL_z], \label{eq:Tn} \\
    f_{n,G}(x) &:=\sqrt n\left(1-\frac{p_\star(x)}{p_G(x)}\right)
 =\sqrt n\,\frac{p_G(x)-p_\star(x)}{p_G(x)}.
 \label{eq:fG}
\end{align}
Note that 
$W_n(z) = F_{n,G_\star}(z)$, which can be viewed as the \textit{oracle} version of the finite-sample KKT function $F_{n,\widehat G}(z)$ in \prettyref{eq:KKT-function}, replacing the fitted density $p_{\widehat G}$ in the denominator by the true density $p_\star$.

The key advantage of the decomposition \prettyref{eq:decomposition} is that it allows us to exploit the stochastic nature of $F_{n,\widehat G}$, which we describe in two parts:
\begin{itemize}
\item Using the heuristic (\ref{eq:CLT_LLN}), the first term $W_n$, representing the empirical fluctuation of $L_z$, converges in distribution to a Gaussian process as $n\to\infty$.
Consequently, after normalization by the energy of $L_z$ (see details below), we expect that, with high probability, the gradient $\nabla W_n(z)$ exhibits no substantial decay as $\|z\|$ increases.
\item In contrast, the second term $T_n f_{n,\widehat G}$ can be controlled using the statistical properties of the NPMLE $\widehat G$.
Specifically, the Gaussian mixture structure, together with the Hellinger closeness
$p_{\widehat G}\approx p_\star$, yields a
low-degree Hermite approximation of the relative
density estimation error, with a rapidly decaying
remainder. This allows us to show that the normalized gradient of $T_nf_{n,\widehat G}$ decays rapidly as $\|z\|$ increases.
\end{itemize}
Together, these observations yield a lower bound on $m_R$ for sufficiently large $R$. 
This decomposition \prettyref{eq:decomposition} marks a sharp departure from the approach of \cite{PolyanskiyWu2020}, which studies the same KKT function in one dimension, but mostly from a deterministic perspective.

We provide more details following the decomposition \prettyref{eq:decomposition}. It will be more convenient to work with the following normalized version of the gradient:
\begin{equation}\label{eq:gradient-field}
D_zF := w_z\nabla F(z),
\end{equation}
where, with $r_\star$ given by (\ref{eq:rG}),
\begin{equation}
w_z :=\sqrt{r_\star(2\Rea z)}e^{-\norm{z}^2/2}.
\label{eq:wz}
\end{equation}
Here the normalization $w_z$ is chosen to account for the energy of $L_z$ (i.e., the exponential growth of the second moment of $L_z$.) Indeed, writing $z=a+ib$, we have
\[
\E_\star|L_z|^2
=e^{\norm{z}^2}\int_{\R^d}
\frac{\varphi(t)}{r_\star(2a+t)}\d t
\asymp
\frac{e^{\norm{z}^2}}{r_\star(2a)} = \frac{1}{w_z^2},
\]
where the middle step is justified in Lemma \ref{lem:weight-bounds} below. 
Using \prettyref{eq:r-bound}, the sought-after infimum $m_R$ of $\|\nabla F\|$ is related to that of the normalized $\|D_zF\|$ as follows:
\begin{align}\label{eq:normalization_connection}
m_R \ge e^{R^2/2-\gamma_KR}\inf_{z\in \calS_R} \norm{D_zF}. \end{align}
It remains to lower bound $\inf_{z\in \mathcal S_R} \norm{D_zF}$. Using the decomposition \prettyref{eq:decomposition} and the triangle inequality,
\begin{equation}
\inf_{z\in\mathcal S_R}\norm{D_zF_{n,G}} \ge\inf_{z\in\mathcal S_R}\norm{D_zW_n}
   -\sup_{z\in\mathcal S_R}\norm{D_zT_nf_{n,G}}.
 \label{eq:gradient-triangle}
\end{equation}
We then bound the two terms separately:
\begin{enumerate}[label=(\roman*),leftmargin=*]
\item \emph{Upper bound for the density estimation error term.}  In \prettyref{sec:shell}, we first prove a \emph{deterministic} bound for the normalized gradient of its population counterpart $Tf_{n,G}$, defined as
\begin{equation} \label{eq:Tn-T}
Tf(z) := \Expect_\star[fL_z] = \int_{\R^d}f(x)p_z(x)\d x.
\end{equation}
Our bound states the following: for every $G_\star\in\cP(K)$, $h>1$, and large $R$:
\begin{align}\label{eq:population_correction_general}
\sup_{\substack{G\in\mathcal P(K)\\ \sqrt n\,H(p_G,p_\star)\le h}} \sup_{z\in \mathcal S_R}\norm{D_zTf_{n,G}}
&\leq Ch\,e^{-cR} + C\sqrt n\,e^{-cR^2\log R}.
\end{align}
Furthermore, if $G_\star\in\cP_s(K)$ and $R\ge C_{K,s}$, for a constant $C_{K,s}$ depending only on $K,s$, then
\begin{align}\label{eq:population_correction_finite}
\sup_{\substack{G\in\mathcal P(K)\\
                 \sqrt n\,H(p_G,p_\star)\le h}}
\sup_{z\in \mathcal S_R}\norm{D_zTf_{n,G}} &\leq Ch\,e^{-cR}.
\end{align}
For arbitrary $G_\star\in\cP(K)$, \prettyref{lem:universal-H} gives $\sqrt nH(p_{\widehat G},p_\star)\le h$ with $h=C_K(\log n)^{(d+1)/2}$ and failure probability at most $n^{-c}$, uniformly over $G_\star$. For a fixed finitely supported $G_\star$, the parametric Hellinger bound in \cite[Theorem~2.1(1)]{ZhangVolgushev2026} gives $\sqrt nH(p_{\widehat G},p_\star)=O_{\Prob}(1)$. 
Thus a sufficiently large deterministic $h$, independent of $n$, makes the probability of $\sqrt nH(p_{\widehat G},p_\star)> h$ arbitrarily small for all sufficiently large $n$.

We then bound the normalized gradient of the empirical approximation error $(T_n-T)f_{n,G}$:  If $\log n\ge C(R^2+\log h)$, then, with probability at least $1-n^{-c}$,
\[
 \sup_{\substack{G\in\cP(K)\\\sqrt n\,H(p_G,p_\star)\le h}}
 \sup_{z\in \mathcal S_R}\norm{D_z(T_n-T)f_{n,G}}\le n^{-c}.
\]
Since $T_n=T+(T_n-T)$, the corresponding bound for $\norm{D_zT_nf_{n,G}}$ is the population bounds (\ref{eq:population_correction_general}) and (\ref{eq:population_correction_finite}) plus $n^{-c}$ on this event.  For the NPMLE, the additional failure probability is the probability that $\sqrt nH(p_{\widehat G},p_\star)>h$; \prettyref{prop:fitted-shell} states the resulting estimate.

\item \emph{Lower bound for the fluctuation.}  In \prettyref{sec:centered-gradient}, we prove the \emph{probabilistic} estimate that
\begin{align}\label{eq:empirical-field}
 \inf_{z\in \mathcal S_R} \norm{D_zW_n} \ge e^{-cR}
\end{align}
with probability at least $1-e^{-c'R}$, provided that $n\geq e^{CR^2}$. The proof combines a pointwise anti-concentration argument with a union bound. The high-level idea is as follows:
For a typical $z\in \mathcal S_R$, \prettyref{lem:point-small-ball} gives a small-ball probability bound $\Prob(\norm{D_zW_n}\le \eps)=O(\eps^{2d})$, consistent with the CLT intuition that $D_zW_n$ behaves as a $d$-dimensional complex Gaussian vector.
Using the typical Lipschitz continuity of $z\mapsto D_zW_n$ established in \prettyref{lem:sphere-variation}, we extend this pointwise bound uniformly over $\mathcal S_R$ via an $\eps$-net of cardinality $(R/\eps)^{2d-1}$.
\textit{Crucially}, the small-ball exponent $2d$ exceeds the dimension $2d-1$ of the sphere $\mathcal S_R$, leaving a factor of $\eps$ after the union bound.
This argument is formalized in \prettyref{prop:centered-gradient-lower}. 
\end{enumerate}

Combining the gradient lower bound \prettyref{eq:empirical-field} and the gradient upper bound \prettyref{eq:population_correction_general} or \prettyref{eq:population_correction_finite} with a suitable Hellinger threshold $h$, we conclude from \prettyref{eq:gradient-triangle} that 
\[
\inf_{z\in\mathcal S_R}\norm{D_zF_{n,\widehat G}} \ge \frac{1}{2}e^{-cR}
\]
with high probability. 
This holds for $R=O(\sqrt{\frac{\log n}{\log\log n}})$ when $G_\star\in \calP(K)$ is arbitrary, and for $R=O(1)$ when $G_\star$ is $s$-atomic.
By \prettyref{eq:normalization_connection}, this implies $m_R\ge e^{cR^2}$ for sufficiently large $R>0$.
Applying the Jensen estimate in \prettyref{prop:jensen-critical-count} together with the high-probability bound $M_{4R}\le e^{CR^2}$, we obtain the desired upper bound of $O(R^{2d})$ on the number of critical points in $\text{int} K$.
The number of critical points on the boundary $\partial K$ is controlled in \prettyref{app:boundary} using similar arguments and bounded by $O(R^{2(d-1)})$. 
Finally, the choice $R=O(\sqrt{\frac{\log n}{\log\log n}})$ yields \prettyref{thm:continuous-prior}, while the choice $R=O(1)$ yields \prettyref{thm:main}.

\section{Discussion}\label{sec:discussions}

In this section we compare the adaptation property of NPMLE for Gaussian mixtures with  
other nonparametric models and conclude with open questions.

\paragraph{Structural adaptation in other nonparametric models.} 

At a high level, adaptation concerns whether an estimator can exploit
simplicity in the true model without knowing its complexity in advance.
Statistical adaptation means achieving the estimation rate available
in that simpler model, while structural adaptation means that the fitted
model itself has low complexity.
For Gaussian mixtures, these two forms of adaptation go hand in hand:
when $G_\star$ is fixed and finitely supported, the NPMLE achieves
the parametric rate
$H^2(p_{\widehat G},p_{G_\star})=O_{\Prob}(1/n)$
by \cite[Theorem~2.1(1)]{ZhangVolgushev2026}, and fits only $O_{\Prob}(1)$ components
by \prettyref{thm:main}.
The connection between these two forms of adaptation can be seen
through the log-likelihood: Statistical bounds control its fluctuations
near the population, whereas structural bounds require more nuanced understanding 
of its directional derivative (KKT function) at the maximizer.
Such strong simultaneous adaptation is not universal among nonparametric
models, as the following example illustrates.

Consider the class of monotonically decreasing densities on $[0,1]$, or  equivalently, the family of scale mixtures of uniform distributions: $p_\theta = \rm Unif[0,\theta]$ for $\theta > 0$ and the mixing distribution is supported on $K =[0,1]$. In this case, the NPMLE is the celebrated Grenander estimator \cite{grenander1956theory}, which is known to be piecewise-constant, and the analogous model complexity parameter is its number of pieces. Even when $G_\star = \delta_1$ is a point mass (so $p_\star = \rm Unif[0,1]$), \cite{andersen1954fluctuations} (see also \cite[Theorem 4]{groeneboom2021grenander}) shows that the number of fitted pieces is $\Theta_{\Prob}(\log n)$, with a limiting normal distribution after centering and scaling, rather than $O_{\Prob}(1)$. This difference can be seen from two perspectives. Statistically, \cite[Theorem 2]{groeneboom2021grenander} 
shows that the Grenander estimator achieves a squared Hellinger density estimation rate $O_{\Prob}(\frac{\log n}{n})$, slower than the parametric rate.
Structurally, the oracle version of the KKT function in \prettyref{eq:Wn}, namely 
$W_n(\theta) = \sqrt{n}(\E_n \frac{p_\theta}{p_\star}-1)$, converges weakly to the random function $\frac{\mathbb B(\theta)}{\theta}$ on $\theta\in (0,1)$, where $\mathbb B$ denotes the Brownian bridge. 
This limiting function is almost surely nowhere differentiable with infinitely many local maxima in $(0,1)$, reflecting the roughness of the finite-sample KKT function $F_{n,\widehat G}$. In contrast, the corresponding limiting function for Gaussian mixtures is almost surely analytic, consistent with the regularity
of $F_{n,\widehat G}$ that plays a crucial role in the proof of our main results.

\paragraph{Further questions.} 
\begin{itemize}
\item 
Can analogous self-regularization and structural adaptation results
be established for multivariate mixtures in other exponential families,
extending the one-dimensional results of \cite{PolyanskiyWu2020}? 
A notable example is the class of Poisson mixtures, where the critical points of the KKT function are determined by a polynomial system. In this case \cite{PolyanskiyWu2020} proves an  $O_\Prob(\log n)$ cardinality bound for the NPMLE when the true prior is sub-exponential. We expect the proof technique in the present paper can be adapted to the Poisson setting to prove the same $O_{\Prob}(L_n^d)$ and $O_{\Prob}(1)$ bound in the respective settings of Theorems \ref{thm:continuous-prior} and \ref{thm:main}, but this will be pursued in a separate work.
\item A more challenging problem concerns structural properties of the \emph{unconstrained} NPMLE. While the multivariate Jensen-type bound in Proposition \ref{prop:jensen-critical-count} is still applicable, a priori we can only localize the support of the NPMLE to the convex hull of the sample, contained in a ball of radius $\max_{i\in[n]}\pnorm{X_i}{}$. This requires us to lower bound the KKT gradient on a much larger radius of $O(\sqrt{\log n})$ (as opposed to the  $O(\sqrt{\log n/\log\log n})$ for \prettyref{thm:continuous-prior}). Under the current decomposition \prettyref{eq:decomposition} of the KKT function into an empirical fluctuation term and a density estimation error term, many of the analytic estimates for these terms are difficult to generalize directly. Partly due to this technical obstacle, we expect the self-regularization and structural adaptation of unconstrained NPMLE to be weaker.\footnote{\label{fn:support-divergence}In fact, the AI-generated manuscript available at \url{https://yangpliu.github.io/repo/support-divergence/paper.pdf} claims that, even when $G_\star$ is a point mass, the support size of the unconstrained NPMLE diverges logarithmically with high probability.}
\end{itemize}

\section{Gradient upper bound for the density estimation error term}\label{sec:shell}

In this section, we upper bound the supremum in \prettyref{eq:gradient-triangle} as a function of the density estimation error in Hellinger distance. For a given true prior $G_\star$, let
\begin{equation}\label{eq:fitted-hellinger-failure}
 a_n(h) := \Prob_{G_\star}\Big(\sqrt n\,H(p_{\widehat G},p_\star)>h
 \Big). 
\end{equation}

\begin{proposition}\label{prop:fitted-shell}
There are constants $C,c,R_0,n_0>0$, depending only on $K$, such that the following holds.  Let $G_\star\in\cP(K)$, $h>1$, $R\ge R_0$, and $n\ge n_0$ satisfy
\begin{equation}
 \log n\ge C(R^2+\log h).
 \label{eq:fitted-sample-condition}
\end{equation}
With probability at least $1-a_n(h)-n^{-c}$,
\begin{equation}\label{eq:fitted-shell}
 \sup_{z\in\mathcal S_R}
 \norm{D_zT_nf_{n,\widehat G}}
 \le Ch e^{-cR}+C\sqrt n\,e^{-cR^2\log R}+n^{-c}.
\end{equation}
Furthermore, for each integer $s\ge1$ there is a constant $C_{K,s}>0$, depending only on $K,s$, such that if $G_\star\in\cP_s(K)$ and $R\ge C_{K,s}$, then, with the same probability guarantee,
\begin{equation}
 \sup_{z\in\mathcal S_R}
 \norm{D_zT_nf_{n,\widehat G}}
 \le Ch e^{-cR}+n^{-c}.
 \label{eq:fitted-shell-atomic}
\end{equation}
\end{proposition}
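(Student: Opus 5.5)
The plan follows the outline in \prettyref{sec:proof-outline}. Write $T_n=T+(T_n-T)$ and prove two separate bounds. The first is a deterministic population bound \prettyref{eq:population_correction_general}/\prettyref{eq:population_correction_finite} for $\norm{D_zTf_{n,G}}$. It should hold uniformly over all $G\in\cP(K)$ with $\sqrt n H(p_G,p_\star)\le h$ and all $z\in\mathcal S_R$. The second is a high-probability bound $n^{-c}$ for $\norm{D_z(T_n-T)f_{n,G}}$, uniform over the same class. On the event $\{\sqrt nH(p_{\widehat G},p_\star)\le h\}$, which has probability at least $1-a_n(h)$, we plug in $G=\widehat G$. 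A union bound then gives the stated probability $1-a_n(h)-n^{-c}$.

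\textbf{Population term.} Write $Tf(z)=\int f(x)\varphi(x-z)\,\d x$. Then $\nabla Tf(z)=\int f(x)(x-z)\varphi(x-z)\,\d x$, which is the action of a Gaussian kernel on $f=f_{n,G}$.
\begin{itemize}
\item Expand $f$ in Hermite polynomials in $L_2(\varphi)$. Since $\int x^\alpha\varphi(x-z)\,\d x$ is polynomial in $z$, a degree-$k$ Hermite component of $f$ contributes a term of size roughly $\norm{z}^k/\sqrt{k!}$ times its coefficient. After multiplying by $w_z\approx e^{-R^2/2}\sqrt{r_\star(2\Rea z)}$, this is tiny unless the coefficient is large.
\item The key input is a relative Hermite approximation (the \prettyref{prop:Hermite-approx} alluded to). Because $p_G/p_\star$ and $p_\star/p_G$ are ratios of Gaussian mixtures with means in $K$, $f_{n,G}$ is well approximated by a degree-$m$ polynomial in weighted $L_2(p_\star)$. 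The approximation error decays like $e^{-cm\log m}$, and the low-degree part is controlled by $\sqrt n H(p_G,p_\star)\le h$ via $\E_\star f^2\lesssim nH^2$ (using \prettyref{eq:r-bound} to handle the denominator $p_G$).
\item Choosing $m\asymp R^2$ gives two contributions. The low-degree part contributes $Ch e^{-cR}$, from Cauchy--Schwarz with the weight $w_z$ and the Gaussian decay of the normalized kernel. The remainder, bounded crudely by $\sqrt n$ times $e^{-cR^2\log R}$, gives the second term.
\item For $G_\star\in\cP_s(K)$ the remainder is avoided. Here $p_\star$ is a finite mixture, so the relevant component of $f$ is governed by finitely many exponentials $e^{\theta\cdot x}$, and the normalized kernel decays like $e^{-cR}$ once $R\ge C_{K,s}$. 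This yields \prettyref{eq:population_correction_finite} with no $\sqrt n$ term.
\end{itemize}

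\textbf{Empirical term.} Here $D_z(T_n-T)f = w_z(\E_n-\E_\star)[f\,\nabla_z L_z]$. The steps are:
\begin{itemize}
\item Truncate at $\norm{x}\le\sqrt{C\log n}$; the tails are controlled by Gaussian tail bounds together with \prettyref{eq:r-bound}.
\item On the truncated region, $|f_{n,G}|\le\sqrt n\,e^{C\sqrt{\log n}}$ and $|w_z\nabla L_z|\le e^{CR\sqrt{\log n}}$.
\item Apply a Bernstein/Hoeffding bound to each fixed pair $(G,z)$.
\item Make the bound uniform by taking an $n^{-C}$-net on $z\in\mathcal S_R$ (Lipschitz in $z$ by analyticity). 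For $G$, take a net of $\cP(K)$ in sup norm on $p_G$ over the truncated ball; moment matching gives a net of size $\exp(C(\log n)^{d+1})$.
\item The condition $\log n\ge C(R^2+\log h)$ ensures that $e^{CR^2}\cdot h/\sqrt n$-type deviations are $\le n^{-c}$.
\end{itemize}

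The main obstacle is the Hermite relative-error approximation of $f_{n,G}$. It must hold uniformly in $G$, with $h$ appearing only linearly. It also needs sharp decay $e^{-cR}$ after the $w_z$ normalization, which requires tracking $r_\star(2\Rea z)$ against the growth $e^{\norm{z}^2/2}$ carefully. I would first attempt it by writing $f=\sqrt n(p_G-p_\star)/p_G$, moving $p_G$ into the weight via \prettyref{eq:r-bound}, and bounding moments $\E_\star[f\,x^\alpha]$ using the Hellinger distance and the exponential-family structure.
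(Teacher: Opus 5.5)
Your overall architecture is the paper's: a deterministic population bound uniform over the Hellinger ball, a uniform $n^{-c}$ bound for $(T_n-T)f_{n,G}$, then substituting $\widehat G$ on $\{\sqrt n\,H(p_{\widehat G},p_\star)\le h\}$. Your empirical-term plan (truncation, Bernstein, a net in $z$, a sup-norm cover of $\{p_G\}$ on the truncated ball) is essentially Proposition~\ref{prop:empirical-shell}. One nit: $z\mapsto w_z\nabla L_z$ is not holomorphic because of $w_z$, so its Lipschitz bound comes from real derivatives controlled via \eqref{eq:logr-derivative}.

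The gap is in the population term. You Hermite-expand $f_{n,G}$ itself and assert a degree-$m$ approximation error $e^{-cm\log m}$. This is false. Since $f_{n,G}=\sqrt n\,(1-r_\star/r_G)$, it contains $1/r_G$, and $r_G$ can vanish at complex points at distance $O(1/\gamma_K)$ from $\R^d$. For example, with $d=1$ and $G=\frac12(\delta_{-M}+\delta_M)$, $r_G\propto\cosh(Mx)$ vanishes at $x=i\pi/(2M)$. So $f_{n,G}$ is analytic only in a strip, and its Hermite coefficients decay like $e^{-c\sqrt k}$, not superexponentially. With $m=\kappa R^2$ and $\kappa<1$ (forced by Remark~\ref{rmk:hellinger-localization}), the tail is at best $e^{-c\sqrt\kappa R}$. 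In the general case this gives $\sqrt n\,e^{-c'R}$ rather than $\sqrt n\,e^{-cR^2\log R}$, which is useless at $R\asymp\sqrt{L_n}$. Even a relative version cannot absorb the $e^{C\|\Rea z\|}$ growth of the normalized kernel's $L_2$ norm when $\|\Rea z\|\approx R$. Your fallback of moving $p_G$ into the weight via \eqref{eq:r-bound} means taking absolute values, which kills the oscillation $e^{i\Ima z\cdot x}$. That oscillation is the only source of decay when $\Ima z$ is large. The atomic case is also misdiagnosed: $f_{n,G}$ is not governed by finitely many exponentials, since $G$ is arbitrary. Finally, $\E_\star f_{n,G}^2\lesssim nH^2$ is unjustified. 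The natural truncation argument loses a factor $e^{C\sqrt{\log n}}$, which is fatal for \eqref{eq:fitted-shell-atomic} at fixed $R$.

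The missing idea is to separate numerator and denominator. The paper expands only $\Delta r_G=r_G-r_\star$, whose Hermite coefficients are the moment differences $\Delta_\alpha(G)/\sqrt{\alpha!}$. This yields three bounds:
\begin{itemize}
\item $\|\Pi_m\Delta r_G\|_{L_2(\varphi)}\le C_KHe^{C_K\sqrt m}$, by Cauchy--Schwarz against Hellinger;
\item a superexponential absolute tail, from $|\Delta_\alpha(G)|\le2\gamma_K^{|\alpha|}$;
\item for $G_\star\in\cP_s(K)$, a tail proportional to $H$, via moment comparison (Lemma~\ref{lem:polarization}). This is what removes the $\sqrt n$ term.
\end{itemize}
The remainder $R_m/r_G$ is handled crudely by Cauchy--Schwarz at cost $C(1+R)e^{3\gamma_KR}$, which $e^{-cm\log m}$ absorbs.

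The polynomial part $P_m/r_G$, with $m=\lfloor\kappa R^2\rfloor$, needs a case split.
\begin{itemize}
\item If $\|\Rea z\|\ge D_K\sqrt m$: shifting $x\mapsto x+\Rea z$ exposes the factor $e^{-\|\Rea z\|^2/2}$, and $P_m(\cdot+\Rea z)$ is controlled by Lemma~\ref{lem:Gaussian-translation}.
\item Otherwise $\|\Ima z\|\ge R/2$: shift the contour by $i\tau_K\Ima z/\|\Ima z\|$, which is legitimate because $r_G$ is zero-free on that strip (Lemma~\ref{lem:zero-free}). This produces $e^{-\tau_K\|\Ima z\|}$, which beats $e^{3\gamma_K\|\Rea z\|+C_K\sqrt m}$ for small $\kappa$.
\end{itemize}
This mechanism is the source of the $Che^{-cR}$ term, and it is absent from your proposal.
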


\begin{remark}[Role of Hellinger localization]
\label{rmk:hellinger-localization}
We comment on the role of the Hellinger upper bound $\sqrt{n}H(p_{\widehat G},p_\star)\le h$ in the proof of \prettyref{prop:fitted-shell}. Roughly speaking, $\sqrt{n}H(p_{\widehat G},p_\star)$ corresponds to the $L_2(p_\star)$-norm of the relative error $f_{n,\widehat G}$ in \prettyref{eq:fG}. One might therefore expect that an upper bound $\E_\star u^2 \le h^2$ would, in general, imply a uniform decay of $D_zT_nu$ on the sphere $\calS_R$ as $R$ increases. However, this implication fails even for $d=1$ and the population case $n=\infty$.
For example, when $p_\star=\varphi$, take an integer $r=R^2\ge1$, and
$u(x)=h\,H_r(x)/\sqrt{r!}$, where $H_r$ is the Hermite polynomial defined below.
Then $\Expect_\star u^2=h^2$ and $Tu(z)=h\,z^r/\sqrt{r!}$.
For every $z\in \C$ with $|z|=R$, Stirling's approximation gives
\[
 \norm{D_zTu}
 =h\,e^{-R^2/2}\frac{rR^{r-1}}{\sqrt{r!}}
 \asymp h\sqrt{R},
\]
which is growing in $R$. In other words, the Hellinger localization by itself is not enough to ensure the exponential decay $h e^{-cR}$ in \prettyref{eq:fitted-shell-atomic}. 

By contrast, our proof heavily relies on the additional structure of the function $f_{n,G}$ in (\ref{eq:fG}). For example, when $G_\star$ is $s$-atomic, \prettyref{prop:Hermite-approx} gives, for $m\ge C_{K,s}$, 
\[
 r_G-r_\star=P_m+R_m,\qquad \deg P_m\le m,\qquad
 \norm{R_m}_{L_2(\varphi)}\le C_KH(p_G,p_\star) e^{-c_Km\log m}.
\]
Consequently, we can write
$f_{n,G}=\sqrt n(P_m+R_m)/r_G$.
In the proof of \prettyref{lem:population-shell}, we will choose $m=\kappa R^2$ for a sufficiently small fixed constant $\kappa>0$.
Since $\kappa<1$, the low-degree component $P_m$ avoids the obstruction illustrated by the preceding counterexample, which involves a polynomial of degree $R^2$.
Meanwhile, the high-degree remainder $R_m$ will be controlled using its $L_2(\varphi)$ norm.
\end{remark}

We first establish the needed Hermite approximation in Section \ref{sec:atomic-approx} to bound the normalized gradient of the population counterpart $Tf_{n,G}$ in Section \ref{subsec:fitted_correction_pop}.  We then control in Section \ref{subsec:fitted_correction_err} the normalized gradient of the empirical approximation error $(T_n-T)f_{n,G}$ uniformly over the Hellinger neighborhood.  Combining these two estimates proves the proposition.

\subsection{Hermite approximation}
\label{sec:atomic-approx}

Let $H_j$ be the degree-$j$ Hermite polynomial orthogonal under the scalar standard normal density with $\int_\reals H_iH_j \varphi = i!\Indc\{i=j\}$. In  multiple dimensions, for a multi-index $\alpha=(\alpha_1,\ldots,\alpha_d)\in\integers_+^d$, let
\[
 H_\alpha(x):=\prod_{j=1}^dH_{\alpha_j}(x_j),
 \qquad \psi_\alpha:=\frac{H_\alpha}{\sqrt{\alpha!}}
\]
where $\alpha!=\prod_{j=1}^d\alpha_j!$. Then $(\psi_\alpha)$ is an orthonormal polynomial basis for $L_2(\varphi)$.  For $h\in L_2(\varphi)$, the truncated Hermite expansion
\[
 \Pi_m h:=\sum_{|\alpha|\le m}
       \langle h,\psi_\alpha\rangle_{L_2(\varphi)}\psi_\alpha
\]
is the best degree-$m$ polynomial approximation of $h$ in $L_2$.

Define the error of likelihood ratios by $\Delta r_G:=r_G-r_\star$, with $r_G$ and $r_\star$ given by (\ref{eq:rG}). The main result of this subsection bounds its polynomial approximation and the remainder of $\Delta r_G$ in both absolute and relative sense. While the absolute error bound is well-known as the Hermite coefficients are given by moment differences, the relative bound proportional to 
$H(p_G,p_\star)$ is the key which relates to the Hellinger distance between $p_G$ and $p_\star$.

\begin{proposition}\label{prop:Hermite-approx}
There are constants $C_K,c_K>0$, depending only on $K$, such that for every $G,G_\star\in\cP(K)$ and integer $m\ge2$,
\begin{equation}
 \norm{\Pi_m(\Delta r_G)}_{L_2(\varphi)}
 \le C_K H(p_G,p_\star)e^{C_K\sqrt m},
 \label{eq:low-poly}
\end{equation}
and the remainder satisfies the following absolute bound:
\begin{equation}\label{eq:absolute-tail}
 \norm{\Delta r_G-\Pi_m(\Delta r_G)}_{L_2(\varphi)}
 \le C_Ke^{-c_Km\log m}.
\end{equation}

Furthermore, for each integer $s\ge1$ there is a constant $C_{K,s}>0$, depending only on $K,s$, such that if $G_\star\in\cP_s(K)$ and $m\ge C_{K,s}$, then
the remainder satisfies the following relative bound:
\begin{equation}
 \norm{\Delta r_G-\Pi_m(\Delta r_G)}_{L_2(\varphi)}
 \le C_K H(p_G,p_\star)e^{-c_Km\log m}.
 \label{eq:relative-tail}
\end{equation}
\end{proposition}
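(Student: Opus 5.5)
The plan is to work entirely in Hermite coordinates. For $\theta\in\R^d$ we have the generating-function identity $\exp\{\theta\cdot x-\norm{\theta}^2/2\}=\sum_{\alpha}\theta^\alpha H_\alpha(x)/\alpha!$. Integrating against $G$ gives
\[
 \langle \Delta r_G,\psi_\alpha\rangle_{L_2(\varphi)}=\frac{\mu_\alpha(G)-\mu_\alpha(G_\star)}{\sqrt{\alpha!}},\qquad \mu_\alpha(G):=\int\theta^\alpha\,\d G(\theta).
\]
All three bounds will come from estimates on these moment differences.

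\emph{Absolute tail \eqref{eq:absolute-tail}.} Since $|\mu_\alpha(G)-\mu_\alpha(G_\star)|\le 2M^{|\alpha|}$, we get
\[
 \norm{\Delta r_G-\Pi_m\Delta r_G}^2_{L_2(\varphi)}\le 4\sum_{|\alpha|>m}\frac{M^{2|\alpha|}}{\alpha!}\le 4\sum_{k>m}\frac{(dM^2)^k}{k!},
\]
using $\sum_{|\alpha|=k}1/\alpha!=d^k/k!$. The right-hand side is at most $e^{-c_Km\log m}$ by Stirling.

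\emph{Low-degree bound \eqref{eq:low-poly}.} I will not compare polynomial norms. Instead I bound each coefficient directly by Cauchy--Schwarz against the Hellinger-type weight. Using $(p-q)^2=(\sqrt p-\sqrt q)^2(\sqrt p+\sqrt q)^2\le 2(\sqrt p-\sqrt q)^2(p+q)$, we get
\[
 \int\frac{(\Delta r_G)^2\varphi}{r_G+r_\star}=\int\frac{(p_G-p_\star)^2}{p_G+p_\star}\le 2H^2(p_G,p_\star).
\]
Hence
\[
 |\langle\Delta r_G,\psi_\alpha\rangle|\le \sqrt2\,H(p_G,p_\star)\Big(\int\psi_\alpha^2\varphi\,(r_G+r_\star)\Big)^{1/2}.
\]
By \eqref{eq:r-bound} and $e^{\gamma_K\norm{x}}\le\prod_j(e^{\gamma_Kx_j}+e^{-\gamma_Kx_j})$, the last integral factorizes into one-dimensional terms $\int H_k^2e^{\pm tx}\varphi/k!$ with $t=\gamma_K$. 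Shifting $x\mapsto x+t$ and using $H_k(x+t)=\sum_j\binom kjt^{k-j}H_j(x)$, each term equals $e^{t^2/2}\sum_{j\le k}\binom kj t^{2j}/j!\le e^{t^2/2+2t\sqrt k}$. Summing over the $(m+1)^d=e^{o(\sqrt m)}$ multi-indices with $|\alpha|\le m$ then gives \eqref{eq:low-poly}.

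\emph{Relative tail \eqref{eq:relative-tail}.} This is the main step. Let $\supp G_\star=\{\theta_1,\dots,\theta_s\}$ and $q(\theta):=\prod_{k}\norm{\theta-\theta_k}^2$. This is a nonnegative polynomial of degree $2s$ vanishing exactly on the atoms. Since $\Expect_{G_\star}q=0$, the quantity $\Expect_Gq$ is a fixed linear combination of moment differences of order at most $2s$. By the coefficient bound just proved (with $m=2s$), this gives $\Expect_Gq\le C_{K,s}H(p_G,p_\star)$.

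Next, for each $\alpha$ let $I_\alpha$ be the polynomial of degree at most $2s$ (say) that interpolates $\theta^\alpha$ and its gradient at every atom (Hermite interpolation). I will show
\[
 |\theta^\alpha-I_\alpha(\theta)|\le C\,|\alpha|^2\Lambda^{|\alpha|}q(\theta)\quad\text{on }K,
\]
with $C,\Lambda$ depending on $K$ and the atoms. Near an atom, the remainder vanishes to second order, and Taylor's theorem bounds it by its Hessian, which is of size $|\alpha|^2M^{|\alpha|}$. Away from the atoms, $q$ is bounded below. The interpolation coefficients are bounded by $\Lambda^{|\alpha|}$ because they are linear in the values and gradients of $\theta^\alpha$ at the atoms.

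Then
\[
 |\mu_\alpha(G)-\mu_\alpha(G_\star)|\le|\Expect_GI_\alpha-\Expect_{G_\star}I_\alpha|+C|\alpha|^2\Lambda^{|\alpha|}\Expect_Gq\le C'|\alpha|^2\Lambda'^{|\alpha|}H(p_G,p_\star).
\]
Here the first term again uses the low-order coefficient bound. Summing $\Lambda'^{2|\alpha|}|\alpha|^4/\alpha!$ over $|\alpha|>m$ gives $e^{-c_Km\log m}$ once $m\ge C$, which is \eqref{eq:relative-tail}.

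The expected obstacle is making the constants in this last step depend only on $(K,s)$ rather than on the atom locations. Interpolation and the lower bound on $q$ degenerate when atoms nearly collide. I would first try to handle this by clustering atoms at a scale adapted to $m$, or by replacing exact interpolation with divided-difference (confluent) interpolation, whose bounds are uniform in the atom configuration. If this fails, I would settle for constants depending on $G_\star$. That weaker form still suffices for \prettyref{thm:main}, since $G_\star$ is fixed there.
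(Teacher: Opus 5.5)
Your absolute tail and low-degree bound are essentially the paper's argument: Cauchy--Schwarz against the Hellinger distance with $f=H_\alpha$, plus $\Expect H_k(Z+t)^2\le k!\,e^{2|t|\sqrt k}$. Your weight $r_G+r_\star$ with $r\le e^{\gamma_K\norm{x}}$ just replaces the paper's $\sup_{\theta\in K}\Expect_{N(\theta,I)}H_\alpha^2$.

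\textbf{The gap is in the relative tail \eqref{eq:relative-tail}.} The proposition asserts that $C_{K,s}$ and the constants depend only on $(K,s)$, i.e.\ the bound holds uniformly over $G_\star\in\cP_s(K)$. Your multivariate Hermite-interpolation step gives $|\theta^\alpha-I_\alpha|\le C|\alpha|^2\Lambda^{|\alpha|}q$ only with $C,\Lambda$ that blow up as the minimal atom separation $\delta\to0$. The blow-up enters in three places: the lower bound $q\gtrsim\delta^{2s}$ away from the atoms, the factor $\delta^{-2(s-1)}$ near an atom, and the conditioning of the interpolation basis. Your fallback suffices for Theorem~\ref{thm:main}, where $G_\star$ is fixed, but it is strictly weaker than the statement. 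Your suggested repairs are not carried out, and in $d\ge2$ it is not evident how to choose $I_\alpha$ of bounded degree so that the remainder is uniformly dominated by $\prod_k\norm{\theta-\theta_k}^2$ near clusters of atoms.

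\textbf{The missing idea is to project to one dimension, where confluent interpolation gives exactly the uniformity you need.}

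Fix a unit vector $v$. The pushforward $v_\#G_\star$ is supported on at most $s$ points $y_1,\dots,y_s\in[-\gamma_K,\gamma_K]$, possibly coinciding. Set $q(y)=\prod_k(y-y_k)^2\ge0$, and let $I_\ell$ be the Newton interpolant of $y^\ell$ at the doubled nodes $(y_1,y_1,\dots,y_s,y_s)$.

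The error formula reads $y^\ell-I_\ell(y)=q(y)\,h_{\ell-2s}(y_1,y_1,\dots,y_s,y_s,y)$. Divided differences of $y^\ell$ are complete homogeneous symmetric polynomials, so $|h_{\ell-j}|\le\binom{\ell}{j}\gamma_K^{\ell-j}$ no matter how close the nodes are. The same bound controls the Newton coefficients of $I_\ell$.

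Since $q\ge0$ and $\int q\,\d(v_\#G_\star)=0$, you obtain
$|\int(v\cdot\theta)^\ell\,\d(G-G_\star)|\le C_s\ell^{2s}(1+\gamma_K)^{\ell+2s}\max_{j\le2s}|\int(v\cdot\theta)^j\,\d(G-G_\star)|$,
uniformly in the atoms. The right side is at most $C_{K,s}H(p_G,p_\star)$ by your low-order coefficient bound.

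Each $\Delta_\alpha(G)$ with $|\alpha|=\ell$ is an entry of the symmetric tensor $T_\ell=\int\theta^{\otimes\ell}\,\d(G-G_\star)$. For symmetric tensors, $\norm{T_\ell}_{\mathrm{op}}=\sup_{\norm v=1}|\langle T_\ell,v^{\otimes\ell}\rangle|$; alternatively, polarization costs only a factor $\ell^\ell/\ell!\le e^\ell$. This yields $|\Delta_\alpha(G)|\le C_{K,s}^{\ell+1}H(p_G,p_\star)$, which is \eqref{eq:mixed-high}.

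This is precisely the moment-comparison inequality \cite[Lemma~6.3]{ZhangVolgushev2026} that the paper invokes. After it, \eqref{eq:relative-tail} follows from the same Stirling summation you already wrote.
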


The proof of \prettyref{prop:Hermite-approx} uses the Hermite coefficients of $\Delta r_G$. Recall the Hermite generating identity 
\begin{equation}
 e^{z\cdot x-z\cdot z/2}
 =\sum_{\alpha\in\Nzero^d}\psi_\alpha(x)\frac{z^\alpha}{\sqrt{\alpha!}}.
 \label{eq:Hermite-generating}
\end{equation}
Consequently, the density-ratio error has the expansion
\begin{equation}
 \Delta r_G(x)=\sum_\alpha \Delta_\alpha(G)\frac{H_\alpha(x)}{\alpha!}
 =\sum_\alpha\frac{\Delta_\alpha(G)}{\sqrt{\alpha!}}\psi_\alpha(x), \qquad \Delta_\alpha(G):=\int_K\theta^\alpha \d(G-G_\star)(\theta).
 \label{eq:ratio-error-Hermite}
\end{equation}
The following lemma provides two estimates of the moment difference $\Delta_\alpha(G)$ in terms of $H(p_G,p_\star)$, for small and large $|\alpha|:=\sum_{j=1}^d \alpha_j$, respectively. 

\begin{lemma}\label{lem:polarization}
There is a constant $C_K>0$, depending only on $K$, such that for every $G,G_\star\in\cP(K)$ and every multi-index $\alpha\in\Nzero^d$,
\begin{equation}
 \frac{|\Delta_\alpha(G)|}{\sqrt{\alpha!}}
 \le C_K H(p_G,p_\star)e^{C_K\sqrt{|\alpha|}}.
 \label{eq:low-coeff}
\end{equation}
For each integer $s\ge1$, there is a constant $C_{K,s}>0$, such that if $G_\star\in\cP_s(K)$, then for $|\alpha|=\ell>2s$,
\begin{equation}
 |\Delta_\alpha(G)|
 \le C_{K,s}^{\ell+1}H(p_G,p_\star).
 \label{eq:mixed-high}
\end{equation}
\end{lemma}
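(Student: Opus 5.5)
The plan is to prove the two estimates separately. For \eqref{eq:low-coeff} I would use a Cauchy--Schwarz argument against the Hellinger distance. For \eqref{eq:mixed-high} I would reduce to one dimension by polarization and then divide by a nonnegative polynomial vanishing on the atoms of $G_\star$. The key design requirement is that no constant depends on the geometry of $\supp(G_\star)$, such as atom separation, only on $K$ and $s$.

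\emph{Step 1: proof of \eqref{eq:low-coeff}.} By \eqref{eq:ratio-error-Hermite}, $\Delta_\alpha(G)/\sqrt{\alpha!}=\langle \Delta r_G,\psi_\alpha\rangle_{L_2(\varphi)}=\int(p_G-p_\star)\psi_\alpha$. Writing $p_G-p_\star=(\sqrt{p_G}-\sqrt{p_\star})(\sqrt{p_G}+\sqrt{p_\star})$ and applying Cauchy--Schwarz gives
\[
 \frac{|\Delta_\alpha(G)|}{\sqrt{\alpha!}}\le H(p_G,p_\star)\Big(2\sup_{\theta\in K}\int\psi_\alpha^2\,p_\theta\Big)^{1/2}.
\]
The supremum factorizes over coordinates. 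For the scalar case, the shift formula $\psi_k(x+t)=\sum_{j\le k}\sqrt{\binom kj}\,\frac{t^j}{\sqrt{j!}}\,\psi_{k-j}(x)$, which follows from \eqref{eq:Hermite-generating}, gives
\[
 \mathbb E\,\psi_k(Z+t)^2=\sum_{j}\binom kj\frac{t^{2j}}{j!}\le\sum_j\frac{(k t^2)^j}{(j!)^2}\le e^{2|t|\sqrt k}.
\]
Taking the product over coordinates with $|t|\le M$ yields a bound $e^{C_K\sqrt{|\alpha|}}$, which proves \eqref{eq:low-coeff}. As a by-product, every moment difference $\Delta_\beta(G)$ with $|\beta|\le 2s$ is at most $C_{K,s}H(p_G,p_\star)$.

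\emph{Step 2: polarization.} Let $\ell=|\alpha|$, and list the coordinate vectors with multiplicity as $e_{j(1)},\dots,e_{j(\ell)}$, with $\alpha_j$ copies of $e_j$. The polarization identity for symmetric multilinear forms gives
\[
 \theta^\alpha=\frac{1}{\ell!\,2^\ell}\sum_{\varepsilon\in\{\pm1\}^\ell}\varepsilon_1\cdots\varepsilon_\ell\,(u_\varepsilon\cdot\theta)^\ell,
 \qquad u_\varepsilon=\sum_i\varepsilon_ie_{j(i)},\quad \norm{u_\varepsilon}\le\ell.
\]
Write $u_\varepsilon=\norm{u_\varepsilon}v_\varepsilon$ with $v_\varepsilon$ a unit vector. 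Since $\ell^\ell/\ell!\le e^\ell$, we get $\Delta_\alpha(G)=\sum_\varepsilon c_\varepsilon\,\delta_\ell(v_\varepsilon)$ with $\sum_\varepsilon|c_\varepsilon|\le e^\ell$. Here $\delta_k(v):=\int t^k\,\d(G_v-G_{\star,v})(t)$, where $G_v$ and $G_{\star,v}$ are the pushforwards of $G$ and $G_\star$ under $\theta\mapsto v\cdot\theta$. These are supported on $[-\gamma_K,\gamma_K]$, and $G_{\star,v}$ has at most $s$ atoms $t_1,\dots,t_s$. It therefore suffices to show $|\delta_\ell(v)|\le C^{\ell+1}_{K,s}H(p_G,p_\star)$ uniformly in unit $v$.

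\emph{Step 3: one-dimensional division.} Let $Q(t)=\prod_{j=1}^s(t-t_j)^2$, which is monic, nonnegative, of degree $2s$, vanishes on $\supp G_{\star,v}$, and has roots in $[-\gamma_K,\gamma_K]$. Euclidean division gives $t^\ell=A(t)Q(t)+B(t)$ with $\deg B<2s$. Because $Q$ is monic with bounded roots, synthetic division shows that all coefficients of $A$ and $B$ are at most $(C_K)^{\ell}$, regardless of how the $t_j$ cluster. Hence $\sup_{|t|\le\gamma_K}|A|\le C_K^{\ell}$. Since $Q\ge0$ and $\int Q\,\d G_{\star,v}=0$,
\[
 |\delta_\ell(v)|\le \norm{A}_\infty\int Q\,\d(G_v-G_{\star,v})+\Big|\int B\,\d(G_v-G_{\star,v})\Big|.
\]
Both integrals involve only projected moment differences $\delta_k(v)$ with $k\le 2s$, with coefficients at most $C_K^{\ell+1}$. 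Each such $\delta_k(v)$ expands as $\sum_{|\beta|=k}\binom{k}{\beta}v^\beta\Delta_\beta(G)$ and is therefore at most $d^{k}C_{K,s}H(p_G,p_\star)$ by Step 1. This gives \eqref{eq:mixed-high}. Using a nonnegative $Q$ is what keeps the bound linear in $H$: a Cauchy--Schwarz bound on $\int AQ\,\d G$ would only give $\sqrt H$.

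The main obstacle is uniformity over configurations of $G_\star$. A multivariate Lagrange or Hermite interpolation on $\supp(G_\star)$ would have constants that blow up as atoms merge. The polarization step avoids this by reducing to the univariate case, where division by a monic polynomial with roots in a fixed interval has constants depending only on $K$ and $\ell$. I would check the division coefficient bound carefully; bounding $\sum_i|a_i|$ of $Q$ by $(1+\gamma_K)^{2s}$ and iterating the long division should give $C_K^{\ell}$ with $C_K=2(1+\gamma_K)^{2s}$.
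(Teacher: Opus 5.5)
Your proof is correct. For \eqref{eq:low-coeff} your argument is the paper's: Cauchy--Schwarz against $\sqrt{p_G}-\sqrt{p_\star}$, plus the Hermite shift formula giving $\Expect\,\psi_k(Z+t)^2\le e^{2|t|\sqrt k}$. (The factor in your display should be $(4\sup_\theta\int\psi_\alpha^2p_\theta)^{1/2}$ rather than $2$, which is immaterial.)

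For \eqref{eq:mixed-high} the routes differ. The paper controls the moment-difference tensors $T_j=\int\theta^{\otimes j}\,\d(G-G_\star)$, $j\le 2s$, in operator norm via \eqref{eq:low-coeff}. It then imports the moment-comparison inequality $\norm{T_\ell}_{\mathrm{op}}\le\ell(\gamma_K+1)^{2s\ell}\max_{j\le2s}\norm{T_j}_{\mathrm{op}}$ from \cite[Lemma~6.3]{ZhangVolgushev2026} as a black box, and reads off $|\Delta_\alpha(G)|\le\norm{T_\ell}_{\mathrm{op}}$ entrywise.

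You instead prove such a comparison from scratch. Polarization reduces to one-dimensional projections at a cost of $\ell^\ell/\ell!\le e^\ell$. Dividing $t^\ell$ by the monic, nonnegative $Q=\prod_j(t-t_j)^2$ then expresses $\delta_\ell(v)$ through $\delta_k(v)$ with $k\le 2s$. The positivity of $Q$ together with $\int Q\,\d G_{\star,v}=0$ gives $|\int AQ\,\d G_v|\le\norm{A}_\infty\int Q\,\d(G_v-G_{\star,v})$, which is exactly what keeps the bound linear in $H$.

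The coefficient bound you flagged does hold. The recursion $r_{k+1}=t\,r_k \bmod Q$ gives remainder coefficients of $\ell_1$-size at most $(1+\gamma_K)^{2sk}$, and hence $\sup_{|t|\le\gamma_K}|A|\le\ell(1+\gamma_K)^{(2s+1)\ell}$. Neither depends on atom separation. Your ``$C_K$'' there actually depends on $s$, which is harmless since the target constant is $C_{K,s}$. This is essentially the same geometric growth as in the cited lemma.

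The paper's route buys brevity. Yours is self-contained and shows transparently why no separation condition on $\supp(G_\star)$ is needed. Incidentally, you could avoid the $e^\ell$ polarization loss via Banach's theorem ($\norm{T}_{\mathrm{op}}=\sup_{\norm{v}=1}|T(v,\ldots,v)|$ for symmetric tensors), but it is absorbed into $C_{K,s}^{\ell+1}$ anyway.
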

The proof of \prettyref{lem:polarization} is deferred to Appendix~\ref{app:mixed-moments}. In fact, the second bound follows from the first by applying the moment-comparison result in \cite[Lemma~6.3]{ZhangVolgushev2026}, which states that higher order moment differences can be controlled by the first $2s$ ones. We will use the two estimates in \prettyref{lem:polarization} to control the polynomial part and, when $G_\star$ is atomic, the relative remainder.

\begin{proof}[Proof of \prettyref{prop:Hermite-approx}]
Write $H=H(p_G,p_\star)$.  By \prettyref{lem:polarization}, the coefficient bound \prettyref{eq:low-coeff} holds for arbitrary $G,G_\star\in\cP(K)$. 
By orthogonality of the Hermite polynomials, we get
\[
 \norm{\Pi_m(\Delta r_G)}_{L_2(\varphi)}^2
 =\sum_{|\alpha|\le m}\frac{|\Delta_\alpha(G)|^2}{\alpha!}
 \le C_KH^2(m+1)^de^{C_K\sqrt m}.
\]
Here we used $\#\{\alpha\in\Nzero^d:|\alpha|\le m\}=\binom{m+d}{d}\le(m+1)^d$.  Taking square roots and absorbing $(m+1)^{d/2}$ into $e^{C_K\sqrt m}$ proves \prettyref{eq:low-poly}.

For the absolute remainder bound, since \(G\) and \(G_\star\) are supported on \(K\), we have
$|\Delta_\alpha(G)|\le2\gamma_K^{|\alpha|}$.  We then get from orthogonality and $\sum_{|\alpha|=\ell}1/\alpha!=d^\ell/\ell!$ that
\[
 \norm{\Delta r_G-\Pi_m(\Delta r_G)}_{L_2(\varphi)}^2=\sum_{\ell>m}\sum_{|\alpha|=\ell}
   \frac{|\Delta_\alpha(G)|^2}{\alpha!}\le4\sum_{\ell>m}\frac{(d\gamma_K^2)^\ell}{\ell!},
\]
which, by Stirling's approximation, yields \prettyref{eq:absolute-tail} for all $m\ge C_{K}$, provided that $C_{K}$ is sufficiently large. The relative bound \prettyref{eq:relative-tail} follows from the same calculation, where we use \prettyref{eq:mixed-high} in \prettyref{lem:polarization} instead. 
\end{proof}

\subsection{Population version}\label{subsec:fitted_correction_pop}

With the convention \prettyref{eq:Tn-T}, the population version of $T_nf_{n,G}$ is
\begin{equation}
 Tf_{n,G}(z)=\sqrt n\,\Expect_0\left\{
 \frac{\Delta r_G(X)}{r_G(X)}e^{z\cdot X-z\cdot z/2}\right\}.
 \label{eq:Tf-formula}
\end{equation}
The following lemma establishes the population version of \prettyref{prop:fitted-shell}.
\begin{lemma}\label{lem:population-shell}
There are constants $R_0>1$ and $c,C>0$, depending only on $K$, such that the following holds.  Let $G,G_\star\in\cP(K)$, $n\ge1$, $R\ge R_0$, and $h>1$ satisfy
\begin{equation}
 \sqrt n\,H(p_G,p_\star)\le h.
 \label{eq:H-ball}
\end{equation}
Then
\begin{equation}
 \sup_{z\in\mathcal S_R}\norm{D_zTf_{n,G}}
 \le Ch e^{-cR}+C\sqrt n\,e^{-cR^2\log R}.
 \label{eq:population-shell}
\end{equation}

In addition, for each integer $s\ge1$, there is a constant $C_{K,s}>0$, depending only on $K,s$, such that if $G_\star\in\cP_s(K)$ and $R\ge C_{K,s}$, then
\begin{equation}\label{eq:population-shell-atomic}
\sup_{z\in\mathcal S_R}\norm{D_zTf_{n,G}}
 \le Ch e^{-cR}.
\end{equation}
\end{lemma}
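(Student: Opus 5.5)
We will bound $\nabla Tf_{n,G}(z)$ directly from the formula \prettyref{eq:Tf-formula}. Fix $m=\kappa R^2$ with a small constant $\kappa>0$ to be chosen, and split $\Delta r_G=P_m+R_m$ with $P_m=\Pi_m(\Delta r_G)$ and $R_m$ the remainder. Then
\[
Tf_{n,G}(z)=\sqrt n\,\Expect_0\Big\{\frac{P_m(X)}{r_G(X)}e^{z\cdot X-z\cdot z/2}\Big\}+\sqrt n\,\Expect_0\Big\{\frac{R_m(X)}{r_G(X)}e^{z\cdot X-z\cdot z/2}\Big\}=:I_1(z)+I_2(z).
\]
Differentiating under the integral gives $\nabla_z e^{z\cdot X-z\cdot z/2}=(X-z)e^{z\cdot X-z\cdot z/2}$. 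We multiply by $w_z$ and bound the two pieces separately. Each bound has the form of a Cauchy--Schwarz estimate in $L_2(\varphi)$ against $|X-z|\,|e^{z\cdot X-z\cdot z/2}|/r_G(X)$.

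\textbf{Remainder term $I_2$.} Write $z=a+ib$. We have $|e^{z\cdot x-z\cdot z/2}|=e^{a\cdot x-\norm a^2/2+\norm b^2/2}$. By Cauchy--Schwarz,
\[
|\nabla I_2(z)|\le\sqrt n\,\norm{R_m}_{L_2(\varphi)}\Big(\Expect_0\frac{\norm{X-z}^2e^{2a\cdot X-\norm a^2+\norm b^2}}{r_G(X)^2}\Big)^{1/2}.
\]
Shifting $X\to X+2a$ and using \prettyref{eq:r-bound} to compare $r_G(2a+t)$ with $r_\star(2a)$ up to factors $e^{C\norm t+CR}$, the second factor is at most $e^{CR}e^{\norm z^2/2}/\sqrt{r_\star(2a)}\cdot\sqrt{r_\star(2a)}/r_G(2a)$. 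After multiplying by $w_z$, what remains is $e^{CR}\,\sqrt{r_\star(2a)}/r_G(2a)\le e^{C'R}$, again by \prettyref{eq:r-bound} with $\norm a\le R$. Combining this with \prettyref{eq:absolute-tail} gives $\sqrt n\,e^{C'R-c\kappa R^2\log R}\le C\sqrt n\,e^{-cR^2\log R}$. In the atomic case, \prettyref{eq:relative-tail} instead gives $h\,e^{C'R-c\kappa R^2\log R}\le h e^{-cR}$ once $m\ge C_{K,s}$, which holds when $R\ge C_{K,s}$. We use $\sqrt n H\le h$ here.

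\textbf{Polynomial term $I_1$ (main obstacle).} The crude Cauchy--Schwarz estimate above, combined with \prettyref{eq:low-poly}, gives only $h\,e^{C\sqrt m+CR}$. That bound grows with $R$, so we need to exploit the low degree. Both $1/r_G$ and the weight ratio are smooth, slowly varying multipliers. The plan is to use that $\nabla$ of the Gaussian transform of a degree-$m$ polynomial $q$ against the tilted measure concentrates near $X\approx 2a$. After the shift $X=2a+t$, the integrand becomes $q(2a+t)(t-\bar z\text{-type shift})\varphi(t)$ times $e^{-\norm z^2/2}$-type phases. The key estimate we aim for is
\[
\sup_{\norm x\le CR}|P_m(x)|\,e^{-\norm x^2/8}\ \lesssim\ \norm{P_m}_{L_2(\varphi)}\,e^{Cm\log(R^2/m)}e^{-R^2/8},
\]
a Bernstein/Nikolskii-type inequality for Hermite-weighted polynomials. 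Note that $|e^{z\cdot x-z\cdot z/2}|\varphi(x)=\varphi(x-a)e^{\norm b^2/2}$, and the Gaussian factor $e^{-\norm z^2/2}$ in $w_z$ beats polynomial growth $R^{m}=e^{\kappa R^2\log R}$. This requires care, and the counterexample in \prettyref{rmk:hellinger-localization} shows that it fails when $m=R^2$. Carrying it through, we expect $w_z|\nabla I_1|\le h\,e^{C\sqrt m+C\kappa R^2\log(1/\kappa)-R^2/4+CR}$. For $\kappa$ small this is at most $he^{-cR}$.

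The delicate step is handling $1/r_G$, which is not polynomial. We would treat it by expanding $1/r_G(2a+t)=\frac{1}{r_G(2a)}\cdot$(factor bounded by $e^{C\norm t}$), and absorbing that factor into $\varphi(t)$. If this pointwise route proves lossy, the fallback is to apply Cauchy--Schwarz in $L_2(\varphi(\cdot-2a))$ and use the fact that $\norm{q}_{L_2(\varphi(\cdot-2a))}\le e^{C\norm a\sqrt m+\dots}\norm q_{L_2(\varphi)}$ for degree-$m$ $q$, via the Mehler/translation operator on Hermite coefficients. This would give a factor $e^{CR\sqrt m}=e^{C\sqrt\kappa R^2}$, which is again beaten by $e^{-R^2/4}$-type decay for small $\kappa$, provided the normalization leaves such decay. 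Verifying that this decay genuinely survives the normalization $w_z$ is the crux. Summing the bounds for $I_1$ and $I_2$ yields \prettyref{eq:population-shell} and \prettyref{eq:population-shell-atomic}.
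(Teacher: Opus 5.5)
Your handling of the remainder $R_m$ is correct and matches the paper. After the shift $X\mapsto X+2a$, Cauchy--Schwarz gives $\norm{D_zT(R_m/r_G)}\le C(1+R)e^{3\gamma_KR}\norm{R_m}_{L_2(\varphi)}$, and \prettyref{eq:absolute-tail} or \prettyref{eq:relative-tail} with $m\asymp\kappa R^2$ finishes.

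The gap is in the polynomial term, and it is structural rather than a matter of care. Write $z=a+ib$. You correctly note that $|e^{z\cdot x-z\cdot z/2}|\varphi(x)=\varphi(x-a)e^{\norm{b}^2/2}$. So after multiplying by $w_z$, only $\sqrt{r_\star(2a)}\,\varphi(x-a)e^{-\norm{a}^2/2}$ survives: the factor $e^{-\norm{z}^2/2}$ is exactly cancelled in the imaginary directions. At $z=ib$ the modulus of the normalized integrand is $\sqrt{r_\star(0)}\,\varphi(x)|P_m(x)|\norm{x-ib}/r_G(x)$, which does not decay in $R$ at all. Hence every bound that moves absolute values inside the integral gives at best order $hRe^{C\sqrt m}$ there. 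This covers your Nikolskii-type estimate, the pointwise comparison of $1/r_G(2a+t)$ with $e^{C\norm{t}}/r_G(2a)$, and the Cauchy--Schwarz fallback.

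Your anticipated factor $e^{-R^2/4}$ is therefore false for small $\norm{a}$, and in general only exponential decay is available. For instance, take $d=1$ and $G=\frac12(\delta_\theta+\delta_{-\theta})$. Then $r_G(x)=e^{-\theta^2/2}\cosh(\theta x)$ vanishes at $x=i\pi/(2\theta)$, so the oscillatory integral at frequency $R$ generically decays like $e^{-\pi R/(2\theta)}$ and no faster. Your displayed key estimate also cannot hold as written: near $x=0$ its left side is of order $\norm{P_m}_{L_2(\varphi)}$, while its right side is $e^{-cR^2}\norm{P_m}_{L_2(\varphi)}$ for small $\kappa$.

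What is missing is a case split on $\norm{\Rea z}$ and, crucially, a contour shift.

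\emph{Large real part.} When $\norm{a}\ge D_K\sqrt m$, substitute $x=y+a$, not $2a$; Cauchy--Schwarz against $\varphi(\cdot-2a)$ throws away the factor $e^{-\norm{a}^2/2}$. Then \prettyref{lem:Gaussian-translation} and \prettyref{eq:low-poly} give the exponent $-\norm{a}^2/2+\norm{a}\sqrt m+C_K(\norm{a}+\sqrt m)\le-\norm{a}^2/4$, which is Gaussian decay.

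\emph{Small real part.} When $\norm{a}<D_K\sqrt m$, choose $\kappa$ with $D_K\sqrt\kappa\le\frac12$; this forces $\norm{b}\ge R/2$. Now the decay must be extracted from the oscillation $e^{ib\cdot x}$. Move the integration in the direction $e=b/\norm{b}$ to $x+i\tau_Ke$, which Cauchy's theorem permits because $|r_G(x+iy)|\ge r_G(x)/2$ for $\norm{y}\le\tau_K$ (\prettyref{lem:zero-free}). This yields the factor $e^{-\tau_K\norm{b}}\le e^{-\tau_KR/2}$. The costs are: $e^{\tau_K\sqrt m}$ for the complex shift of $P_m$ (\prettyref{lem:Gaussian-translation}), $e^{C_K\sqrt m}$ from \prettyref{eq:low-poly}, and $e^{3\gamma_K\norm{a}}$ from the weight. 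Since $\norm{a}<D_K\sqrt\kappa R$, these total at most $e^{C\sqrt\kappa R}$ with $C$ depending only on $K$. Taking $\kappa$ small gives $\sqrt n\norm{D_zT(P_m/r_G)}\le Che^{-cR}$.

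Without this use of the analyticity of $1/r_G$ in a strip, your argument produces no decay in the imaginary directions. So the polynomial bound, and hence the lemma, is not established.
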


We prove \prettyref{lem:population-shell} in the rest of this section. We start with a simple lemma for bounding the denominator.
\begin{lemma}
\label{lem:zero-free}
There is $\tau_K>0$ such that for every $G\in\cP(K)$,
\begin{equation}
 |r_G(x+iy)|\ge\frac12r_G(x),
 \qquad x,y\in\R^d,\quad \norm{y}\le\tau_K.
 \label{eq:zero-free}
\end{equation}
\end{lemma}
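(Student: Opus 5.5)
The plan is to compare $r_G(x+iy)$ with $r_G(x)$ directly from the integral representation \prettyref{eq:rG}. The integrand $\exp\{\theta\cdot x-\norm{\theta}^2/2\}$ is positive, so $G$ together with this weight defines a positive measure whose total mass is $r_G(x)$. Set
\[
\mu_x(\d\theta):=\frac{\exp\{\theta\cdot x-\norm{\theta}^2/2\}}{r_G(x)}\,G(\d\theta),
\]
which is a probability measure on $K$. Then
\[
\frac{r_G(x+iy)}{r_G(x)}=\int_K e^{i\theta\cdot y}\,\mu_x(\d\theta).
\]
The problem is thus to show that a characteristic-function-type average stays bounded away from zero when $\norm{y}$ is small, uniformly in the probability measure. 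This uniformity is what makes the estimate independent of both $x$ and $G$.

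The key step is a uniform phase bound. For $\theta\in K$ we have $|\theta\cdot y|\le\gamma_K\norm{y}$ by Cauchy--Schwarz. I then take the real part:
\[
\Rea\int_K e^{i\theta\cdot y}\,\mu_x(\d\theta)=\int_K\cos(\theta\cdot y)\,\mu_x(\d\theta)\ge\cos(\gamma_K\norm{y}).
\]
This holds as long as $\gamma_K\norm{y}\le\pi/2$, since $\cos$ is decreasing on $[0,\pi/2]$ and even. I would choose $\tau_K:=\pi/(3\gamma_K)$, so that $\cos(\gamma_K\tau_K)=1/2$. Then $|r_G(x+iy)|\ge\Rea\, r_G(x+iy)\ge\frac12 r_G(x)$ for all $\norm{y}\le\tau_K$. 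If $\gamma_K=0$ the statement is trivial, though for the cube $K=[-M,M]^d$ we have $\gamma_K=M\sqrt d>0$ anyway.

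I expect no real obstacle here. The only point needing care is to use the real part rather than attempting a Taylor expansion in $y$: the real-part bound gives a clean constant that is uniform in $x$ and $G$, whereas a Taylor expansion would produce $x$-dependent error terms. The constant $\tau_K$ depends only on $\gamma_K$, hence only on $K$, as required.
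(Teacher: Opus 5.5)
Your proof is correct and is essentially the paper's argument. The paper also writes $r_G(x+iy)/r_G(x)$ as an average of $e^{i\theta\cdot y}$ against the positive weights $e^{-\norm{x-\theta}^2/2}\,G(\d\theta)$, which define the same tilted (posterior) measure as your $\mu_x$. It then chooses $\tau_K$ so that $|\theta\cdot y|\le\pi/3$ on $K$ and concludes that the real part of the ratio is at least $1/2$.
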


\begin{proof}
Choose $\tau_K$ so that $|\theta\cdot y|\le\pi/3$ for $\theta\in K$ and $\norm{y}\le\tau_K$. Then
\begin{align*}
\frac{r_G(x+iy)}{r_G(x)} &= \frac{\varphi(x)}{\varphi(x+iy)}\cdot \frac{\int_K e^{-(x-\theta+iy)\cdot (x-\theta+iy)/2}\d G(\theta)}{\int_K e^{-\norm{x-\theta}^2/2} \d G(\theta)} \\
&= \frac{\int_K e^{-\norm{x-\theta}^2/2 + i\theta\cdot y}\d G(\theta)}{\int_K e^{-\norm{x-\theta}^2/2} \d G(\theta)}
\end{align*}
has real part at least $1/2$, implying \prettyref{eq:zero-free}.  
\end{proof}

We also need a bound on complex translations of the polynomial part.  The following lemma is proved in Appendix~\ref{app:polynomial-translations}.

\begin{lemma}\label{lem:Gaussian-translation}
Let $m\ge 0$ be an integer, and $P$ be a polynomial in $d$ variables with complex coefficients and total degree at most $m$.  For every $z\in\C^d$,
\begin{equation}
 \norm{P(\cdot+z)}_{L_2(\varphi)}
 \le e^{\norm{z}\sqrt m}\norm P_{L_2(\varphi)}.
 \label{eq:Gaussian-translation}
\end{equation}
\end{lemma}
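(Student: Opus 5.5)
We expand $P$ in the orthonormal Hermite basis $(\psi_\alpha)$ of $L_2(\varphi)$ and write the complex translation as a Taylor series in the directional derivative $z\cdot\nabla$. The key point is that differentiation acts on Hermite polynomials as a weighted shift, so its operator norm on polynomials of degree at most $m$ is at most $\sqrt m$.

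\emph{Step 1 (Taylor expansion).} Since $P$ is a polynomial of total degree at most $m$, its Taylor expansion in the complex direction $z$ is a finite sum, valid identically in $x$:
\[
P(x+z)=\sum_{k=0}^{m}\frac{\big((z\cdot\nabla)^kP\big)(x)}{k!},\qquad z\cdot\nabla=\sum_{j=1}^d z_j\partial_j .
\]
By the triangle inequality in $L_2(\varphi)$, it then suffices to prove the single-step bound
\begin{equation*}
\norm{(u\cdot\nabla)Q}_{L_2(\varphi)}\le \norm{u}\sqrt m\,\norm{Q}_{L_2(\varphi)}
\end{equation*}
for every $u\in\C^d$ and every polynomial $Q$ of degree at most $m$. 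Indeed, each application of $z\cdot\nabla$ keeps the degree at most $m$, so the bound iterates and gives
\[
\norm{P(\cdot+z)}_{L_2(\varphi)}\le\sum_{k\ge0}\frac{(\norm z\sqrt m)^k}{k!}\,\norm P_{L_2(\varphi)}=e^{\norm z\sqrt m}\norm P_{L_2(\varphi)}.
\]

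\emph{Step 2 (the derivative as a weighted shift).} For scalar Hermite polynomials we have $H_a'=aH_{a-1}$. Hence $\partial_j\psi_\alpha=\sqrt{\alpha_j}\,\psi_{\alpha-e_j}$, with the convention that the term is $0$ when $\alpha_j=0$. Write $Q=\sum_{|\alpha|\le m}c_\alpha\psi_\alpha$. Collecting terms by the output index $\beta$, the coefficient of $\psi_\beta$ in $(u\cdot\nabla)Q$ is
\[
\sum_j u_j\sqrt{\beta_j+1}\,c_{\beta+e_j}.
\]

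\emph{Step 3 (Cauchy--Schwarz).} Applying Cauchy--Schwarz over $j$ to this coefficient, with $\norm{u}^2=\sum_j|u_j|^2$, gives
\[
\Big|\sum_j u_j\sqrt{\beta_j+1}\,c_{\beta+e_j}\Big|^2\le\norm u^2\sum_j(\beta_j+1)|c_{\beta+e_j}|^2 .
\]
Summing over $\beta$ and reindexing by $\alpha=\beta+e_j$ yields
\[
\norm{(u\cdot\nabla)Q}_{L_2(\varphi)}^2\le\norm u^2\sum_\alpha|\alpha|\,|c_\alpha|^2\le \norm u^2\, m\,\norm Q_{L_2(\varphi)}^2 ,
\]
which is the single-step bound of Step 1.

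All steps are routine. The only point needing care is that $u$ is complex: the bound must use $\norm u^2=\sum_j|u_j|^2$, which agrees with the paper's convention $\norm{z}^2=\norm{\Rea z}^2+\norm{\Ima z}^2$. The bound must also be proved through orthogonality of the images $\psi_{\alpha-e_j}$, not via a crude per-coordinate estimate. A per-coordinate bound would lose a factor $\sqrt d$ or $d$ in the exponent.
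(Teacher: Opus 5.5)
Your proposal is correct and follows essentially the same route as the paper: you use the Hermite expansion and the weighted-shift identity $\partial_j\psi_\alpha=\sqrt{\alpha_j}\,\psi_{\alpha-e_j}$ to get the one-step bound $\norm{(u\cdot\nabla)Q}_{L_2(\varphi)}\le\norm{u}\sqrt m\,\norm{Q}_{L_2(\varphi)}$, then iterate it and conclude with the finite Taylor expansion and the triangle inequality. The only cosmetic difference is that you apply Cauchy--Schwarz to each Hermite coefficient, whereas the paper applies it pointwise in $x$ to get $\norm{(z\cdot\nabla)P}^2_{L_2(\varphi)}\le\norm{z}^2\sum_j\norm{\partial_jP}^2_{L_2(\varphi)}$ and then uses $\sum_j\norm{\partial_jP}^2_{L_2(\varphi)}=\sum_\alpha|\alpha|\,|c_\alpha|^2$.
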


\begin{proof}[Proof of \prettyref{lem:population-shell}]
Choose a sufficiently small constant $\kappa>0$, depending only on $K$, as specified below, and set $m=\lfloor\kappa R^2\rfloor$.  Increasing $R_0$ ensures that for $R \geq R_0$,
\[
 2\le\frac\kappa2R^2\le m\le\kappa R^2.
\]
Write $\Delta r_G=P_m+R_m$ with $P_m=\Pi_m(\Delta r_G)$. In the atomic case, assume in addition that $R\ge C_{K,s}$, with this radius threshold chosen large enough that $m=\lfloor\kappa R^2\rfloor$ satisfies the degree threshold in \prettyref{prop:Hermite-approx}. With $H=H(p_G,p_\star)$, \prettyref{prop:Hermite-approx} gives
\begin{align}
 \norm{P_m}_{L_2(\varphi)}
 &\le C_KH e^{C_K\sqrt m}, \label{eq:polynomial-bound} \\
  \norm{R_m}_{L_2(\varphi)}
 &\le \begin{cases}C_Ke^{-c_Km\log m} &\text{if } G_\star \in \calP(K), \\
 C_K He^{-c_Km\log m} &\text{if } G_\star\in \calP_s(K). 
 \end{cases} \label{eq:remainder-bound}
\end{align}
Write $z=a+ib$, with $a=\Rea z$ and $b=\Ima z$. We bound the remainder and polynomial parts separately. \\

\noindent \emph{Contribution of the remainder.} For the remainder, we prove that
\begin{equation}
\sqrt n\sup_{z\in \mathcal S_R}\norm{D_zT(R_m/r_G)} \le 
 \begin{cases}
     C\sqrt{n}e^{-cR^2\log R} &\text{if } G_\star\in \calP(K), \\
     Che^{-cR^2\log R} &\text{if } G_\star\in \calP_s(K)\text{ and } \sqrt{n}H(p_G,p_\star)\le h.
 \end{cases}
 \label{eq:scaled-tail-shell-bound}
\end{equation}
To prove \prettyref{eq:scaled-tail-shell-bound}, recall the definition of $T$ in \prettyref{eq:Tf-formula} and 
the normalization in \prettyref{eq:gradient-field}.
Applying the Gaussian derivative identity 
$\nabla_zp_z(x)=(x-z)p_z(x)$, we have
\begin{align}\label{eq:DzT-formula}
D_z T(R_m/r_G) = \sqrt{r_\star(2a)} \Expect_0\left\{
   \frac{R_m(X)}{r_G(X)}e^{z\cdot X-z\cdot z/2 - \norm{z}^2/2}(X-z)\right\}. 
\end{align}
Simple algebra gives $|e^{z\cdot X-z\cdot z/2 - \norm{z}^2/2}| = e^{a\cdot X-\norm{a}^2}$, so that by Cauchy--Schwarz, 
\begin{align*}
 \norm{D_zT(R_m/r_G)}
 &\le\sqrt{r_\star(2a)}\Expect_0\left\{
   \frac{|R_m(X)|}{r_G(X)}e^{a\cdot X-\norm{a}^2}\norm{X-z}\right\}\\
 &\le \norm{R_m}_{L_2(\varphi)}
 \left[r_\star(2a)\Expect_0\left\{
   \frac{e^{2a\cdot X-2\norm{a}^2}\norm{X-z}^2}{r_G(X)^2}
 \right\}\right]^{1/2}.
\end{align*}
For $\norm{z} = R$, we bound the second factor as follows: 
\begin{align}
 r_\star(2a)\Expect_0\left\{
   \frac{e^{2a\cdot X-2\norm{a}^2}\norm{X-z}^2}{r_G(X)^2}
 \right\}
 &\stepa{=}r_\star(2a)\Expect_0\left\{
   \frac{\norm{X+a}^2+\norm{b}^2}{r_G(X+2a)^2}\right\}\notag\\
 &\stepb{\le} e^{\gamma_K^2+2\gamma_K\norm{a}}
       \Expect_0\{(\norm{X+a}^2+\norm{b}^2)e^{2\gamma_K\norm{X+2a}}\}\notag\\
 &\stepc{\le} 2e^{\gamma_K^2+6\gamma_K\norm{a}}
       \Expect_0\{(\norm{X}^2 + R^2)e^{2\gamma_K\norm{X}}\}\notag\\
 &\stepd{\le} C_K(1+R^2)e^{6\gamma_K\norm{a}}. \label{eq:normalized-kernel-second-moment}
\end{align}
Here (a) uses $\varphi(x)e^{2a\cdot x-2\norm{a}^2}=\varphi(x-2a)$ and a change of variable $X\mapsto X+2a$, (b) follows from the bounds $r_\star(2a)\le e^{2\gamma_K\norm{a}}$ and $r_G(x)^{-2}\le e^{\gamma_K^2+2\gamma_K\norm{x}}$ in \prettyref{eq:r-bound}, (c) uses $\norm{X+a}^2+\norm{b}^2\le 2(\norm{X}^2+R^2)$, and (d) follows from the finiteness of corresponding Gaussian moments. Taking square roots yields
\begin{equation}
 \norm{D_zT(R_m/r_G)}
 \le C_K(1+R)e^{3\gamma_KR}\norm{R_m}_{L_2(\varphi)}.
 \label{eq:tail-shell-bound}
\end{equation}
Using \prettyref{eq:remainder-bound} and the choice $m=\lfloor \kappa R^2 \rfloor$, we arrive at \prettyref{eq:scaled-tail-shell-bound} by absorbing factors into $e^{-cR^2\log R}$. \\ 

\noindent \emph{Contribution of the polynomial.} Next we treat the polynomial part $P_m$ and show that
\begin{equation}
 \sqrt n\norm{D_zT(P_m/r_G)}
 \le Ch e^{-cR} \qquad \text{for all } z\in \mathcal S_R.\label{eq:polynomial-target}
\end{equation}
To prove \prettyref{eq:polynomial-target}, we distinguish two cases according to $a=\Rea z$. 

\paragraph{Case I: $\norm{a}\ge D_K\sqrt{m}$.} Here $D_K>0$ is a large constant depending only on $K$. 
Applying the change of variable $x=y+a$ and $\varphi(y+a)=\varphi(y)e^{-a\cdot y-\norm{a}^2/2}$ to \prettyref{eq:DzT-formula}, we obtain the exact vector identity
\[
 D_zT(P_m/r_G)
 =\sqrt{r_\star(2a)}e^{-\norm{a}^2/2}
 \int_{\R^d}\varphi(y)\frac{P_m(y+a)}{r_G(y+a)}
       e^{ib\cdot y}(y-ib)\d y.
\]
To proceed, \prettyref{eq:r-bound} gives
\begin{align*}
 \frac1{r_G(y+a)}
 \le e^{\gamma_K\norm{y+a}+\gamma_K^2/2}
 \le e^{\gamma_K\norm{a}+\gamma_K^2/2}e^{\gamma_K\norm{y}}, \qquad \sqrt{r_\star(2a)}\le e^{\gamma_K\norm{a}}.
\end{align*}
In addition, $\norm{y-ib}\le \norm{y}+R\le (1+R)(1+\norm{y})$.  Taking the norm inside the integral therefore gives
\begin{align*}
 \norm{D_zT(P_m/r_G)}
 &\le (1+R)e^{-\norm{a}^2/2+2\gamma_K\norm{a}+\gamma_K^2/2}
 \Expect_0\{|P_m(X+a)|e^{\gamma_K\norm{X}}(1+\norm{X})\} \\
 &\le (1+R)e^{-\norm{a}^2/2+2\gamma_K\norm{a}+\gamma_K^2/2}\cdot \norm{P_m(\cdot+a)}_{L_2(\varphi)}
 \left[\Expect_0\{e^{2\gamma_K\norm{X}}(1+\norm{X})^2\}\right]^{1/2} \\
 &\le C_K(1+R)e^{-\norm{a}^2/2+2\gamma_K\norm{a}}
\norm{P_m(\cdot+a)}_{L_2(\varphi)},
\end{align*}
where the second inequality is Cauchy--Schwarz. Applying \prettyref{lem:Gaussian-translation} to $P_m$ with shift $a$ and using \prettyref{eq:polynomial-bound}, we obtain
\begin{equation}
 \norm{D_zT(P_m/r_G)}
 \le C_K(1+R)H
 \exp\left\{-\frac{\norm{a}^2}{2}+\norm{a}\sqrt m+C_K\norm{a}+C_K\sqrt m\right\}.
 \label{eq:large-real}
\end{equation}
Choose $D_K$ large enough that, for $m\ge1$ and $\norm{a}\ge D_K\sqrt m$, the exponent in \prettyref{eq:large-real} is at most $-\norm{a}^2/4$. With $m=\lfloor \kappa R^2\rfloor$, \prettyref{eq:large-real} implies \prettyref{eq:polynomial-target} in this case (with an even better exponent $e^{-cR^2}$). 

\paragraph{Case II: $\norm{a}<D_K\sqrt m$.}
Choose a small enough $\kappa$ so that $D_K\sqrt\kappa\le1/2$.  Then $\norm{a}\le R/2$, so $\norm{b}\ge R/2>0$; let $e=b/\norm{b}$. For the constant $\tau_K>0$ given in \prettyref{lem:zero-free}, we begin with a key contour identity: 
\begin{align}
 D_zT(P_m/r_G)
 &=\sqrt{r_\star(2a)}e^{-\norm{z}^2/2}\int_{\R^d}\varphi(x)\frac{P_m(x)}{r_G(x)}
       e^{z\cdot x-z\cdot z/2}(x-z)\d x  \label{eq:population-contour-shift} \\
 &=\sqrt{r_\star(2a)}e^{-\norm{z}^2/2}\int_{\R^d}\varphi(x+i\tau_Ke)
       \frac{P_m(x+i\tau_Ke)}{r_G(x+i\tau_Ke)}
       e^{z\cdot(x+i\tau_Ke)-z\cdot z/2}
       (x+i\tau_Ke-z)\d x. \notag
\end{align}
Here the first equality is the same as \prettyref{eq:DzT-formula}, so the key is to justify the second equality. Denote the vector integrand in the first line by $\Phi_z(x)$, i.e.,
\begin{align*}
\Phi_z(x) = \varphi(x)\frac{P_m(x)}{r_G(x)}
       e^{z\cdot x-z\cdot z/2}(x-z).
\end{align*}
Decomposing $\R^d$ into the direction $e$ and its orthogonal complement $e^\perp$, by Fubini it suffices to show that, for every $u\in e^\perp$,
\begin{align}\label{eq:contour_identity}
\int_{\R} \Phi_z(u+te) \d t = \int_{\R} \Phi_z(u+(t+i\tau_K)e) \d t.  
\end{align}
Indeed, every $x\in \R^d$ can be written as $x = u + te$ with $u\in e^\perp$, so integrating \prettyref{eq:contour_identity} over $u\in e^\perp$ and applying Fubini give $\int_{\R^d} \Phi_z(x)\d x=\int_{\R^d} \Phi_z(x+i\tau_Ke)\d x$, which is \prettyref{eq:population-contour-shift}.

Consider the rectangular contour with vertices $-T,T,T+i\tau_K,-T+i\tau_K$, with $T\to\infty$.
By Cauchy's integral theorem, to prove (\ref{eq:contour_identity}), it further suffices to show that: 
\begin{enumerate}
    \item the map $\zeta\mapsto \Phi_z(u+\zeta e)$ is holomorphic on a neighborhood of the strip $0\le\Ima\zeta\le\tau_K$; 
    \item the total contribution of the vertical sides of the rectangle vanishes as $T\to\infty$: 
    \[
 \sum_{\sigma\in\{-1,1\}}\int_0^{\tau_K}
       \norm{\Phi_z(u+(\sigma T+iv)e)}\d v \longrightarrow 0, \qquad \text{uniformly in }u\in e^\perp. 
\]
\end{enumerate}
For the first claim, it suffices to check that the denominator $r_G(u+\zeta e)$ is zero-free on the strip $0\le\Ima\zeta\le\tau_K$; since $\norm{\Ima(u+\zeta e)} \le \tau_K$, this follows from \prettyref{lem:zero-free}. For the second claim, the polynomial degree bound and \prettyref{eq:r-bound} give, for $0\le v\le\tau_K$,
\[
 \norm{\Phi_z(u+(t+iv)e)}
 \le C(1+\norm{u}+|t|)^{m+1}
       e^{-(\norm{u}^2+t^2)/2+(\gamma_K+\norm{a})(\norm{u}+|t|)},
\]
where $C$ may depend on the fixed $P_m,z,m,K$. For $t\in \{\pm T\}$ and integrating over $v\in [0,\tau_K]$, the second claim follows. This proves (\ref{eq:contour_identity}) and hence also \prettyref{eq:population-contour-shift}. 

In the second integral of \prettyref{eq:population-contour-shift}, note that
\begin{align*}
 |\varphi(x+i\tau_Ke)|&=\varphi(x)e^{\tau_K^2/2},\\
 e^{-\norm{z}^2/2}\left|e^{z\cdot(x+i\tau_Ke)-z\cdot z/2}\right|
 &=e^{a\cdot x-\norm{a}^2-\tau_K\norm{b}}.
\end{align*}
Taking the norm inside the integral and using \prettyref{lem:zero-free}, we have
\begin{align*}
 \norm{D_zT(P_m/r_G)}
 &\le e^{\tau_K^2/2-\tau_K\norm{b}}\sqrt{r_\star(2a)}
 \Expect_0\left\{\frac{|P_m(X+i\tau_Ke)|}{r_G(X)/2}
 e^{a\cdot X-\norm{a}^2}
 \norm{X+i\tau_Ke-z}\right\}\\
 &\le2e^{\tau_K^2/2-\tau_K\norm{b}}
 \norm{P_m(\cdot+i\tau_Ke)}_{L_2(\varphi)}
 \left[r_\star(2a)\Expect_0\left\{
 \frac{e^{2a\cdot X-2\norm{a}^2}
       \norm{X+i\tau_Ke-z}^2}
      {r_G(X)^2}
 \right\}\right]^{1/2}.
\end{align*}
Similar to the analysis in \prettyref{eq:normalized-kernel-second-moment}, the final factor is at most $C_K(1+R)e^{3\gamma_K\norm{a}}$. Therefore, 
\begin{align}
 \norm{D_zT(P_m/r_G)}
 &\le C_K(1+R)e^{-\tau_K\norm{b}+3\gamma_K\norm{a}}
\norm{P_m(\cdot+i\tau_Ke)}_{L_2(\varphi)} \notag \\
  &\le C_K(1+R)H
       \exp\{-\tau_K\norm{b}+3\gamma_K\norm{a}+C_K\sqrt m\},
 \label{eq:large-imag}  
\end{align}
where the last step uses \prettyref{lem:Gaussian-translation} applied to $P_m$ with shift $i\tau_Ke$, as well as \prettyref{eq:polynomial-bound}. 
Choose $\kappa$, depending only on $K$, small enough that in addition
\[
 (3\gamma_KD_K+C_K)\sqrt\kappa\le\frac{\tau_K}{4}.
\]
Then $\norm{b}\ge R/2$, $\norm{a}<D_K\sqrt m$, and $\sqrt m\le\sqrt\kappa R$ make the exponent in \prettyref{eq:large-imag} at most $-\tau_KR/4$. This proves \prettyref{eq:polynomial-target} in the case where $\norm{a}$ is small. \\

\noindent \emph{Summary.} Summing up, both bounds \prettyref{eq:population-shell} and \prettyref{eq:population-shell-atomic} follow from \prettyref{eq:scaled-tail-shell-bound} and \prettyref{eq:polynomial-target}. 
\end{proof}

\subsection{Empirical approximation error}\label{subsec:fitted_correction_err}

The estimation error term decomposes into its population counterpart and an empirical approximation error:
\begin{equation}\label{eq:empirical-decomp}
 T_nf_{n,G}=Tf_{n,G}+V_{n,G},
 \qquad
 V_{n,G}(z):=(\Expect_n-\Expect_\star)\{f_{n,G}L_z\}.
\end{equation}
Here $V_{n,G}=(T_n-T)f_{n,G}$ is the empirical approximation error, and we recall the definition of $L_z$ in \prettyref{eq:Lz}. The next proposition controls the normalized gradient of $V_{n,\widehat G}$, with the data-dependent NPMLE $\widehat G$, through a uniform upper bound of $\norm{D_zV_{n,G}}$ over all $G\in \calP(K)$ with $\sqrt{n} H(p_G,p_\star)\le h$. 

\begin{proposition}\label{prop:empirical-shell}
There are constants $C,c,R_0,n_0>0$, depending only on $K$, such that the following holds.  For every $n\ge n_0$, $R\ge R_0$, and $h>1$ satisfying the sample-size condition
\[
 \log n\ge C(R^2+\log h),
\]
uniformly over $G_\star\in\cP(K)$, with probability at least $1-n^{-c}$,
\begin{equation}
 \sup_{\substack{G\in\cP(K):\\ \sqrt nH(p_G,p_\star)\le h}}
 \sup_{z\in\mathcal S_R}\norm{D_zV_{n,G}}
 \le n^{-c}.
 \label{eq:empirical-shell}
\end{equation}
\end{proposition}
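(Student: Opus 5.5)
The approach is to treat $D_zV_{n,G}$ as a centered empirical process indexed by the pair $(G,z)$, and to run the classical three-step argument: truncation, a pointwise Bernstein bound, and a union bound over a net.

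\textbf{Setup.} Write the normalized gradient as
\[
D_zV_{n,G}=(\Expect_n-\Expect_\star)\big\{g_{G,z}\big\},\qquad g_{G,z}(x):=w_z\,\sqrt n\,\frac{\Delta r_G(x)}{r_G(x)}\,L_z(x)\,(x-z),
\]
using $\nabla_z L_z(x)=(x-z)L_z(x)$.

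\textbf{Truncation.} Fix $\rho_n:=C_1\sqrt{\log n}$. The event $E=\{\max_i\norm{X_i}\le\rho_n\}$ has probability at least $1-n^{-c}$, uniformly over $G_\star\in\cP(K)$, by Gaussian tails together with $\supp G_\star\subset K$. On $E$ the empirical average only sees $g_{G,z}\Indc\{\norm{x}\le\rho_n\}$. The population tail $\Expect_\star\{\norm{g_{G,z}}\Indc\{\norm X>\rho_n\}\}$ can be bounded as follows:
\begin{itemize}
\item \prettyref{eq:r-bound} gives $|\Delta r_G/r_G|\le e^{C\norm{x}}$.
\item We have $|w_zL_z(x)|\le e^{C R+ R\norm{x}}$ with no $e^{\norm{z}^2}$ factor. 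This is because $|e^{z\cdot x-z\cdot z/2}|e^{-\norm{z}^2/2}=e^{a\cdot x-\norm a^2}$.
\end{itemize}
Hence the tail is at most $\sqrt n\,e^{CR\rho_n-\rho_n^2/4}$. Under $\log n\ge C(R^2+\log h)$ with $C$ large, we have $R\rho_n\le \rho_n^2/100$, so this tail is $\le n^{-2}$.

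\textbf{Envelope and variance.} For $\norm x\le\rho_n$, the same estimates give the envelope
\[
\norm{g_{G,z}(x)}\le \sqrt n\, e^{C R\rho_n}\le n^{1/2+\eta},
\]
where $\eta$ is small once the constant in the sample-size condition is large. For the variance I use the Hellinger structure rather than $\chi^2$:
\[
\frac{(p_G-p_\star)^2}{p_G^2}\,p_\star=\big(\sqrt{p_G}-\sqrt{p_\star}\big)^2\,\frac{(\sqrt{p_G}+\sqrt{p_\star})^2p_\star}{p_G^2}.
\]
On $\norm x\le\rho_n$ the second factor is at most $e^{C\rho_n}$ by \prettyref{eq:r-bound}. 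Therefore
\[
\Expect_\star\norm{g_{G,z}}^2\Indc\{\norm X\le\rho_n\}\le n H^2 e^{CR\rho_n}\le h^2n^{\eta}.
\]
Bernstein's inequality then gives, for each fixed $(G,z)$,
\[
\Prob\big(\norm{(\Expect_n-\Expect_\star)g_{G,z}\Indc_{\le\rho_n}}>n^{-c}\big)\le \exp(-n^{c'}),
\]
since the scale $h\,n^{\eta}/\sqrt n+n^{1/2+\eta}/n$ is a negative power of $n$. Here I use that $\log h\le \log n/C$, so $h\le n^{1/C}$.

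\textbf{Discretization.} This is the step I expect to be the main obstacle, specifically the net over $G$.
\begin{itemize}
\item \emph{In $z$:} on $\norm x\le\rho_n$, the map $z\mapsto g_{G,z}(x)$ is Lipschitz with constant $n^{O(1)}$ (including $w_z$, which involves $r_\star(2a)$). So a net of $\mathcal S_R$ of mesh $n^{-C}$ and cardinality $n^{O(d)}$ suffices.
\item \emph{In $G$:} a naive grid on $\cP(K)$ has entropy polynomial in $n$, which is too large. Instead I will exploit analyticity. On the ball of radius $\rho_n$, $r_G$ is determined up to error $n^{-C}$ (relative to the lower bound $e^{-C\rho_n}$ on $r_G$) by its moments of order $k\lesssim \log n$. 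This follows by truncating the Hermite/Taylor expansion \prettyref{eq:Hermite-generating}, whose tail is $\sum_{\ell>k}(\gamma_K\rho_n)^\ell/\ell!$.
\item By Carathéodory, every $G$ can be replaced by a $G'$ with $O((\log n)^d)$ atoms matching these moments. Its atoms and weights can then be rounded to a grid of mesh $n^{-C}$.
\item The resulting net has log-cardinality $O((\log n)^{d+1})$. Its elements approximate $\Delta r_G/r_G$ uniformly on the ball to within $n^{-C}$, while keeping $\sqrt nH(p_{G'},p_\star)\le 2h$ after discarding net points outside this enlarged constraint.
\end{itemize}

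\textbf{Conclusion.} The perturbation from replacing $(G,z)$ by its net point is at most $n^{1/2+\eta}\cdot n^{-C}\le n^{-c}$, both for the empirical and population averages. A union bound over $\exp(O((\log n)^{d+1}))$ pairs against $\exp(-n^{c'})$ probabilities, together with the event $E$, yields \prettyref{eq:empirical-shell}.
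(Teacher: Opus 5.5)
Your proposal is correct and follows essentially the same route as the paper. Both arguments truncate at radius $\asymp\sqrt{\log n}$, use an $n^{1/2+\eta}$ envelope, feed a Hellinger-based variance bound into a pointwise Bernstein inequality, and finish with a union bound over an $n^{-C}$-mesh net in $z$ and a truncated-sup-norm net in $G$ of log-cardinality $O((\log n)^{d+1})$.

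The differences are minor. The paper cites the Saha--Guntuboyina covering bound rather than building the $G$-net by moment matching and Carath\'eodory as you do, and chooses net representatives inside the Hellinger ball rather than enlarging it to $2h$. It also bounds the variance without truncation via Cauchy--Schwarz, which gives a factor $H$ instead of your $H^2$. Neither difference affects the conclusion.
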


\begin{proof}

Throughout the proof, let $\eta\in (0,\frac{1}{10})$ be a fixed universal constant; we will use $n^\eta$ to denote a small polynomial factor in $n$. 
Define
\begin{equation} \label{eq:Kz}
 \K_z(x) := D_z L_z = \sqrt{r_\star(2\Rea z)}  \frac{e^{z\cdot x - z\cdot z/2 -\norm{z}^2/2}}{r_\star(x)}(x-z)\in\C^d,
\end{equation}
where we apply $\partial_z L_z(x) = L_z(x)(x-z)$. Under this notation, 
\begin{equation}
 D_zV_{n,G}
 =(\Expect_n-\Expect_\star)\{f_{n,G}\K_z\}.
 \label{eq:empirical-gradient}
\end{equation}
We first bound its variance for fixed $G$ and $z$.  Abbreviate $H\equiv H(p_G,p_\star)$ and, within this calculation, $u=\sqrt{r_G}-\sqrt{r_\star}$, so that $\Expect_0 u^2=H^2$.  By \prettyref{eq:r-bound} and $|u|\le\sqrt{r_G}+\sqrt{r_\star}$,
\[
 \frac{[\Delta r_G(x)]^2}{r_G(x)^2r_\star(x)}
 =\frac{u(x)^2(\sqrt{r_G(x)}+\sqrt{r_\star(x)})^2}
        {r_G(x)^2r_\star(x)}
 \le C_K|u(x)|e^{C_K\norm{x}}.
\]
As a result, with $a=\Rea z$, we have the upper bound
\begin{align}
 \frac1n\Expect_\star\norm{f_{n,G}\K_z}^2
 &=r_\star(2a)\Expect_0\left\{
   \frac{(\Delta r_G)^2}{r_G^2r_\star}
   e^{2a\cdot X-2\norm{a}^2}\norm{X-z}^2\right\}\notag\\
 &\le C_Kr_\star(2a)\left[\Expect_0\left\{u(X)^2
 \right\}\right]^{1/2}
 \left[\Expect_0\left\{
 e^{4a\cdot X-4\norm{a}^2+2C_K\norm{X}}\norm{X-z}^4
 \right\}\right]^{1/2}\notag\\
 &\le C_KH e^{C_K R^2}.
 \label{eq:empirical-variance-raw}
\end{align}
Here the first inequality is Cauchy--Schwarz, and the second inequality uses $r_\star(2a)\le e^{2\gamma_K\norm{a}}\le e^{2\gamma_KR}$ and the Gaussian moment generating function. Since $H\le h/\sqrt n$ and $R^2+\log h\le C^{-1}\log n$, choosing $C$ and then $n_0$ sufficiently large gives
\begin{equation}
 \sup_{G,z}\frac1n\Expect_\star\norm{f_{n,G}\K_z}^2
 \le C_Kh n^{-1/2}e^{C_KR^2}
 \le n^{-1/2+\eta},
 \label{eq:empirical-variance}
\end{equation}
where the supremum is over $G\in\cP(K)$ with $\sqrt nH(p_G,p_\star)\le h$ and $\norm{z}\le R$. Note that using \prettyref{eq:empirical-gradient} and \prettyref{eq:empirical-variance}, Chebyshev's inequality already leads to \prettyref{eq:empirical-shell} for \emph{fixed} $G$ and $z$. 

The rest of the proof is to apply an $\varepsilon$-net argument to establish \prettyref{eq:empirical-shell} uniformly in $G$ and $z$. To begin with, we apply a truncation argument: for a constant $C_{\mathrm{tr},K}$ to be chosen below, define
\begin{equation}
 t_n=C_{\mathrm{tr},K}\sqrt{\log n},\qquad
 \mathcal X_n=\{x\in\R^d:\norm{x}\le t_n\},\qquad
 \mathcal E_n=\bigcap_{i=1}^n\{X_i\in \mathcal X_n\}.
 \label{eq:empirical-truncation-event}
\end{equation}
We establish several properties of $\mathcal X_n$ and $\mathcal E_n$: 
\begin{enumerate}
    \item\label{property1} First, $\sup_{G_\star\in\cP(K)}\Prob_{G_\star}(\mathcal E_n^c) \le n^{-20}$ when $C_{\mathrm{tr},K}$ and then $n_0$ are sufficiently large. This is because each observation has the representation $X_i=\Theta_i+Z_i$, with $\norm{\Theta_i}\le \gamma_K$ and $Z_i\sim N(0,I)$. The usual Gaussian concentration and union bound give this claim. 
    \item\label{property2} On $\calX_n$, one has $|f_{n,G}|\le n^{1/2+\eta}$ for $n_0$ large enough. In fact, by definition of $f_{n,G}$ in \prettyref{eq:fG} and \prettyref{eq:r-bound}, we have $|f_{n,G}|\le C_K\sqrt{n}e^{C_K\sqrt{\log n}}$, which is at most $n^{1/2+\eta}$ for $n\ge n_0$ large enough. The same arguments also give $n^{-\eta}\le r_G\le n^{\eta}$ on $\calX_n$ for all $G\in \calP(K)$. 
    \item\label{property3} On $\calX_n$ and $\{\norm{z}\le R\}$, one has $\norm{\K_z}\le n^\eta$ for $C$ and $n_0$ large enough. In fact, by \prettyref{eq:Kz} and \prettyref{eq:r-bound}, it is clear that $\norm{\K_z} \le C_Ke^{C_K R\sqrt{\log n}}$, which is at most $n^\eta$ for $n\ge \max\{n_0,e^{CR^2}\}$ with $C$ and $n_0$ large enough. 
    \item\label{property4} On $\calX_n$ and $\{\norm{z}\le R\}$, one has $\max_{i\in [2d]}\norm{\partial_{\xi_i} \K_z}\le n^\eta$ for $C$ and $n_0$ large enough, where $\xi\equiv(a,b)\in\R^{2d}$ consists of the real coordinates of $z=a+ib$. This follows from a similar analysis to Property~\ref{property3}, with an additional observation that $\log r_\star(2a)$, which is a log-partition function, has bounded derivatives thanks to the compact support of $G_\star$ (cf. \prettyref{eq:logr-derivative}). In particular, this bound implies that on $\mathcal X_n$, 
    \begin{equation}\label{eq:Kz-Lipschitz}
    \norm{ \K_{z_1} - \K_{z_2} } \le n^\eta \norm{z_1 - z_2}, \qquad \text{for all }z_1,z_2\in \mathcal S_R,
    \end{equation}
    by slightly enlarging $\eta$ if necessary to absorb additional constant factors. 
    \item\label{property5} For large enough $C$ and $n_0$, it holds that \begin{align}
 \sup_{z\in \mathcal S_R}
 \Expect_\star\left[\norm{f_{n,G}\K_z}
       \mathbf1_{\mathcal X_n^c}\right]
 \le n^{-20}.
 \label{eq:empirical-truncation-tail}
\end{align}
This is because the formulas for $f_{n,G}$ and $\K_z$, together with \prettyref{eq:r-bound}, give
\[
\norm{f_{n,G}(x)\K_z(x)}
 \le C_K\sqrt n\,(1+R+\norm{x})^2
       e^{C_KR^2+(2R+C_K)\norm{x}},
 \qquad z\in \mathcal S_R. 
\]
Integrating $\norm{f_{n,G}(x)\K_z(x)} \varphi(x)r_\star(x)$ on the event $\{\norm{x}>t_n\}$ gives a factor of $C_K\sqrt{n} e^{C_KR^2-c_Kt_n^2}$, as long as the constant $C_{\mathrm{tr},K}$ is large enough. Then \prettyref{eq:empirical-truncation-tail} follows from $R^2\le C^{-1}\log n$ for suitably large $C$.
\end{enumerate}

Now we are ready to present detailed estimates on the truncated sum for fixed $z,G$
and an $\varepsilon$-net argument to obtain uniform bound in both arguments. We define the truncated empirical approximation error by
\[
 V_{n,G}^{\mathrm{tr}}(z)
 :=(\Expect_n-\Expect_\star)\{f_{n,G}L_z\mathbf1_{\mathcal X_n}\}.
\]
Then $D_zV_{n,G}^{\mathrm{tr}}=(\Expect_n-\Expect_\star)\{f_{n,G}\K_z\mathbf1_{\mathcal X_n}\}$.
Recall Bernstein's inequality for a real bounded variable $Y$:
\[
 \Prob\{|(\Expect_n-\Expect_\star)Y|>t\}
 \le2\exp\left\{-\frac{cnt^2}
 {\Expect_\star Y^2+\norm{Y}_{\infty} t}\right\}.
\]
Applying this to the real and imaginary parts of each of the $d$ coordinates, together with the variance bound $\E_\star Y^2\le n^{1/2+\eta}$ in \prettyref{eq:empirical-variance} and $\norm{Y}_{\infty} \le n^{1/2+2\eta}$ by Properties~\ref{property2}--\ref{property3}, we obtain 
\begin{equation}
 \Prob\left\{
\norm{D_zV_{n,G}^{\mathrm{tr}}}>n^{-\eta}
 \right\}
 \le C_K\exp\{-c_K n^{1/2-3\eta}\}
 \label{eq:Bernstein-empirical}
\end{equation}
for every fixed $G$ in the Hellinger neighborhood and $z\in\mathcal S_R$. In addition, on the event $\mathcal E_n$, the empirical average of the discarded part is zero, so
\[
 D_zV_{n,G}-D_zV_{n,G}^{\mathrm{tr}}
 =-\Expect_\star\{f_{n,G}\K_z\mathbf1_{\mathcal X_n^c}\}.
\]
Consequently, \prettyref{eq:empirical-truncation-tail} in Property~\ref{property5} gives, on this event,
\begin{equation}\label{eq:truncation-difference}
 \sup_{G\in\cP(K)} \sup_{z\in \mathcal S_R}\norm{D_zV_{n,G}
                  -D_zV_{n,G}^{\mathrm{tr}}}
 \le n^{-20}.
\end{equation}

Next we construct $\varepsilon$-nets over both $G$ and $z$. The $\varepsilon$-net over $z\in \mathcal S_R$ is easy: we take a $\delta_n$-net $\mathcal Z_n=\{z_1,\ldots,z_{N_z}\}\subseteq\mathcal S_R$ in Euclidean distance, with $\delta_n=n^{-12}$.  Since $\mathcal S_R$ is a $(2d-1)$-dimensional sphere, we may take $N_z\le C(1+R/\delta_n)^{2d-1}$, so $\log N_z=O(\log n)$. 

To construct an $\varepsilon$-net over $G$, let
\[
 \mathcal H_n=\{G\in\cP(K):\sqrt nH(p_G,p_\star)\le h\},
 \qquad
 \norm{f}_{\infty,t_n}=\sup_{\norm{x}\le t_n}|f(x)|.
\]
Let $\{r_{G_1},\ldots,r_{G_N}\}$ be an $\eps_n$-net for the density ratios
$\{r_G:G\in\mathcal H_n\}$ in the truncated sup-norm $\norm{\cdot}_{\infty,t_n}$, where $\eps_n=n^{-15}$ and every $G_j$ belongs to $\mathcal H_n$. Thus, for every $G\in\mathcal H_n$, some $j$ satisfies $\| r_G-r_{G_j}\|_{\infty,t_n}\le\eps_n$. 
Using existing covering number bound for mixture density under the same norm, we can ensure $\log N \le C_K (\log n)^{d+1}$. 
Indeed, set
\[
 \gamma_n=\frac{\eps_n}{2}\inf_{x\in\mathcal X_n}\varphi(x)
 =\frac{n^{-15}}{2}(2\pi)^{-d/2}e^{-t_n^2/2}.
\]
By \cite[Theorem~4.1, (4.1)]{SahaGuntuboyina2020} (see earlier \cite[Lemma 2]{Zhang2009} for one dimension), the class $\sth{p_G: G\in \calP(K)}$ admits a $\gamma_n$-cover under $\norm{\cdot}_{\infty,t_n}$ with log-cardinality at most $C_K(\log n)^{d+1}$.  
From each covering ball intersecting $\{p_G:G\in\mathcal H_n\}$, choose a representative $p_{G_j}$ with $G_j\in\mathcal H_n$.  The resulting net has $\log N\le C_K(\log n)^{d+1}$, and every $G\in\mathcal H_n$ is covered by some $G_j$, so that
\[
 \norm{r_G-r_{G_j}}_{\infty,t_n}
 \le\frac{\| p_G-p_{G_j}\|_{\infty,t_n}}
              {\inf_{x\in\mathcal X_n}\varphi(x)}
 \le\frac{2\gamma_n}{\inf_{x\in\mathcal X_n}\varphi(x)}
 =\eps_n.
\]

We now carry out the approximation arguments and bound the error from replacing $G,z$ by net elements $G_j,z_k$.  Choose $j,k$ so that $\|r_G-r_{G_j}\|_{\infty,t_n}\le\eps_n$ and $\norm{z-z_k}\le\delta_n$. By Property~\ref{property2} and the $G$-covering, 
\[
 |f_{n,G}(x)-f_{n,G_j}(x)|
 =\sqrt n\,\frac{r_\star(x)|r_G(x)-r_{G_j}(x)|}
 {r_G(x)r_{G_j}(x)}
 \le n^{-14+3\eta}.
\]
As a result, by Properties~\ref{property2}--\ref{property4}, 
\begin{align*}
 \sup_{x\in\mathcal X_n}
 \norm{f_{n,G}\K_z-f_{n,G_j}\K_{z_k}}
 &\le
 \sup_{x\in\mathcal X_n}|f_{n,G}-f_{n,G_j}|\norm{\K_z}
 +\sup_{x\in\mathcal X_n}|f_{n,G_j}|\norm{\K_z-\K_{z_k}}\\
 &\le n^{-14+4\eta}
           +n^{1/2+2\eta}\delta_n
 \le2n^{-10}.
\end{align*}
Since $\norm{(\Expect_n-\Expect_\star)h}\le2\sup_x\norm{h(x)}$ for any bounded vector-valued function $h$, this estimate gives, for sufficiently large $n$,
\[
 \norm{D_zV_{n,G}^{\mathrm{tr}}
       -D_{z_k}V_{n,G_j}^{\mathrm{tr}}}
 \le4n^{-10}\le n^{-9}.
\]
Taking suprema yields the deterministic reduction
\begin{equation*}
 \sup_{G\in\mathcal H_n}\sup_{z\in\mathcal S_R}
 \norm{D_zV_{n,G}^{\mathrm{tr}}}
 \le\max_{1\le j\le N}\max_{1\le k\le N_z}
 \norm{D_{z_k}V_{n,G_j}^{\mathrm{tr}}}+n^{-9}.
\end{equation*}
On the event $\mathcal E_n$ from \prettyref{eq:empirical-truncation-event}, the inequality \prettyref{eq:truncation-difference} also gives
\begin{equation}
 \sup_{G\in\mathcal H_n}\sup_{z\in\mathcal S_R}
 \norm{D_zV_{n,G}}
 \le\max_{1\le j\le N}\max_{1\le k\le N_z}
 \norm{D_{z_k}V_{n,G_j}^{\mathrm{tr}}}+2n^{-9}.
 \label{eq:empirical-net-reduction-full}
\end{equation}

It remains to bound this finite maximum. Using \prettyref{eq:Bernstein-empirical} and the union bound, we have
\[
 \Prob\left\{
 \max_{1\le j\le N}\max_{1\le k\le N_z}
 \norm{D_{z_k}V_{n,G_j}^{\mathrm{tr}}}>n^{-\eta}
 \right\}
 \le C_K NN_z\exp\{-c_Kn^{1/2-3\eta}\}.
\]
Combining this with \prettyref{eq:empirical-net-reduction-full} and $\Prob(\mathcal E_n^c)\le n^{-20}$ from Property~\ref{property1}, we obtain
\[
 \Prob\left\{
 \sup_{G\in\mathcal H_n}\sup_{z\in\mathcal S_R}
 \norm{D_zV_{n,G}}>n^{-\eta} + 2n^{-9}
 \right\}
 \le n^{-20}+C_K NN_z\exp\{-c_1n^{1/2-3\eta}\}
 \le n^{-\eta}.
\]
The desired inequality \prettyref{eq:empirical-shell} then follows
from $\log(NN_z)=O((\log n)^{d+1})$ and $1/2-3\eta>0$.  All constants are uniform over $G_\star\in\cP(K)$.
\end{proof}

\subsection{Proof of \prettyref{prop:fitted-shell}}
We are now ready to conclude the proof of \prettyref{prop:fitted-shell} from \prettyref{lem:population-shell} and \prettyref{prop:empirical-shell}. By the triangle inequality, 
\begin{align*}
\norm{D_z T_n f_{n,\widehat G}} \le \norm{D_z Tf_{n,\widehat G}} + \norm{D_z V_{n,\widehat G}}. 
\end{align*}
Let $\mathcal E_h$ be the event that $\sqrt nH(p_{\widehat G},p_\star)\le h$; by definition, $\Prob_{G_\star}(\mathcal E_h^c)=a_n(h)$. \prettyref{lem:population-shell} shows that the event $\mathcal E_h$ implies
\begin{align*}
\sup_{z\in \mathcal S_R} \norm{D_zTf_{n,\widehat G}} \le \begin{cases}
Che^{-cR} + C\sqrt{n}e^{-cR^2\log R} &\text{if } G_\star\in \calP(K), \\
Che^{-cR}  &\text{if } G_\star\in \calP_s(K). 
\end{cases}
\end{align*}
For the empirical approximation error, \prettyref{prop:empirical-shell} yields
\begin{align*}
\Prob\pth{ \calE_h \cap \sth{
 \sup_{z\in\mathcal S_R}
 \norm{D_zV_{n,\widehat G}} > n^{-c}} } \le n^{-c}. 
\end{align*}
Therefore, with a common failure probability $a_n(h)+n^{-c}$, the claimed upper bounds in \prettyref{prop:fitted-shell} hold. 

\section{Gradient lower bound of the empirical fluctuation}
\label{sec:centered-gradient}

Recall the normalized gradient defined in \prettyref{eq:gradient-field} and the fluctuation term \prettyref{eq:Wn}.
We now bound the probability that $\|D_zW_n\|$ is small 
somewhere on the sphere $\calS_R$. 
This is the first term on the right-hand side of \prettyref{eq:gradient-triangle}.
This argument applies to every $G_\star$ supported on the fixed compact set $K$. 

\begin{proposition}
\label{prop:centered-gradient-lower}
There are $R_0,C>0$, depending only on $K$, such that the following
holds.  Define
\begin{equation}
 \Delta_{n,R}=Cn^{-1/6}e^{CR^2}.
 \label{eq:point-resolution}
\end{equation}
For every $R\ge R_0$ and $0<\eps<1/4$ satisfying
$\eps\ge\Delta_{n,R}$,
\begin{equation}\label{eq:direct-sphere-final}
\Prob\Big(\inf_{z\in\mathcal S_R}\norm{D_zW_n}\le\eps\Big)
\le CR^{4d+8}\eps^{1/(d+1)}.
\end{equation}
\end{proposition}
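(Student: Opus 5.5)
The plan is a chaining-free net argument: a pointwise small-ball estimate for $D_zW_n$, a Lipschitz bound for $z\mapsto D_zW_n$ on $\mathcal S_R$ in expectation, and a union bound with the net mesh chosen to balance the two errors. For a fixed $z\in\mathcal S_R$, $D_zW_n=\sqrt n(\Expect_n-\Expect_\star)\K_z$ is a normalized sum of iid centered vectors in $\C^d\cong\R^{2d}$, with $\K_z=D_zL_z$ as in \prettyref{eq:Kz}. The normalization $w_z$ is designed to make the covariance of $\K_z$ of order one. The pointwise ingredient is \prettyref{lem:point-small-ball}, to be proved via the direct Esseen-type estimate of \prettyref{lmm:esseen}:
\[
\Prob\big(\norm{D_zW_n}\le \delta\big)\le CR^{b}\,\delta^{2d}\qquad\text{for all }\delta\ge\Delta_{n,R}.
\]
The proof has two parts. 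First, lower bound the smallest eigenvalue of the $2d\times 2d$ real covariance of $(\Rea\K_z,\Ima\K_z)$ by $R^{-O(1)}$; this uses the explicit Gaussian form of $L_z$ and \prettyref{lem:weight-bounds}. Second, control the characteristic function of the sum up to frequency $\delta^{-1}$. The Esseen smoothing error is where the $n^{-1/6}e^{CR^2}$ resolution comes from: third moments of $\K_z$ are only bounded by $e^{CR^2}$, and balancing the smoothing scale against the Berry--Esseen-type remainder produces the factor $n^{-1/6}$.

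The second ingredient is \prettyref{lem:sphere-variation}. Differentiate $D_zW_n$ in the $2d$ real coordinates $\xi=(a,b)$ of $z$; the derivative of $\log r_\star(2a)$ is bounded, as in Property~\ref{property4}. This gives a random Lipschitz constant
\[
\Lambda:=\sup_{z\in\mathcal S_R}\max_i\norm{\partial_{\xi_i}D_zW_n}
\]
for which I aim to show $\Expect\Lambda^2\le CR^{c}$. The route is to bound the second moment of $\partial_{\xi_i}D_zW_n$ pointwise by $R^{O(1)}$, again thanks to the normalization $w_z$. To pass to the supremum over the sphere, bound the sup of a holomorphic-type field by an average over a slightly enlarged region: use the mean value property or Sobolev embedding in $\xi$, which costs only polynomial factors in $R$. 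Markov then gives $\Prob(\Lambda>\lambda)\le CR^c\lambda^{-2}$.

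Next comes the union bound. Fix $\lambda\ge1$ and mesh $\eta=\eps/\lambda$, and take an $\eta$-net $\mathcal Z$ of $\mathcal S_R$. On $\{\Lambda\le\lambda\}$, if $\norm{D_zW_n}\le\eps$ somewhere, then some net point $z_k$ has $\norm{D_{z_k}W_n}\le 2\eps$. Since $2\eps\ge\Delta_{n,R}$, the pointwise bound applies at each net point, so
\[
\Prob\Big(\inf_{\mathcal S_R}\norm{D_zW_n}\le\eps\Big)\le \frac{CR^c}{\lambda^2}+C\Big(\frac{R\lambda}{\eps}\Big)^{2d-1}R^b\eps^{2d}.
\]
The second term is roughly $R^{O(1)}\lambda^{2d-1}\eps$, which is the crucial surplus factor of $\eps$. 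Slack in the Lipschitz step (for example, needing a net of a thin shell around $\mathcal S_R$, i.e.\ $2d$-dimensional, to apply the sup-by-average argument) would cost one more power of $\lambda$. Balancing $\lambda^{2d}\eps$ against $\lambda^{-2}$ with $\lambda=\eps^{-1/(2d+2)}$ then yields the stated exponent $\eps^{1/(d+1)}$, and collecting powers gives the factor $R^{4d+8}$.

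The main obstacle is the pointwise small-ball bound with the sharp exponent $2d$ and only polynomial dependence on $R$. Two difficulties arise: the covariance nondegeneracy must hold uniformly over $z\in\mathcal S_R$ and over all $G_\star\in\cP(K)$, and $\K_z$ is heavy-tailed, with moments growing like $e^{CR^2}$. I would first prove nondegeneracy by computing $\Expect_\star[\K_z\K_z^*]$ and $\Expect_\star[\K_z\K_z^\top]$ explicitly. I would then show that for unit $v\in\C^d$ the real part of $v^*\K_z$ cannot be essentially deterministic, by exploiting the oscillating factor $e^{ib\cdot x}$ and the linear factor $(x-z)$.
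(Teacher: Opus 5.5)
Your architecture matches the paper's: an Esseen-type pointwise anti-concentration bound, $\Expect\Lambda^2\le CR^{O(1)}$ via Sobolev embedding, and a $\delta$-net with $\delta=\eps/L$ plus a union bound. The gap is that the pointwise ingredient you plan to prove, $\Prob(\norm{D_zW_n}\le\delta)\le CR^b\delta^{2d}$ uniformly on $\mathcal S_R$, is false. The real points $z=a\in\R^d$ with $\norm{a}=R$ lie on $\mathcal S_R$. There $\K_a(x)=U_a(x)(x-a)$ is real, so $\Ima D_aW_n\equiv0$. Thus the $2d\times2d$ covariance of $(\Rea\K_z,\Ima\K_z)$ is singular, and $D_aW_n$ is effectively a real $d$-dimensional vector whose small-ball probability is of order $\eps^d$, not $\eps^{2d}$. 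The oscillating factor $e^{ib\cdot x}$ you intend to exploit is exactly what disappears as $b=\Ima z\to0$. The correct statement, after removing the constant phase $e^{ia\cdot b}$, is a covariance lower bound $c(1+R)^{-2}\diag(I_d,\tau_z^2I_d)$ with $\tau_z=1\wedge\norm{\Ima z}$. Through the Esseen bound this gives only $\Prob(\norm{D_zW_n}\le\eps)\le CR^{2d}\eps^{2d}(\eps^2+\tau_z^2)^{-d/2}$. You also cannot fall back on the worst case $\eps^d$ at every net point: multiplied by $(RL/\eps)^{2d-1}$ points it gives $R^{2d-1}L^{2d-1}\eps^{1-d}$, which does not decay even for $d=1$.

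The missing idea is to count net points by their distance to the real slice $\mathcal S_R\cap\R^d$. This is a $(d-1)$-sphere of codimension $d$ inside the $(2d-1)$-sphere. A volume argument therefore shows that a $\delta$-packing has at most $M(t)\le CR^{d-1}\delta^{-(2d-1)}(t+\delta)^d$ points with $\norm{\Ima z}\le t$. Summing $(1\wedge\eps/\norm{\Ima z})^d$ against $\d M(t)$ by peeling, the near-real region contributes $CR^{3d-1}L^{2d-1}\eps\log(1/\eps)$. This is the same surplus factor $\eps$ as the bulk, up to a logarithm. That $\log(1/\eps)$ is the actual reason the paper takes $L=\eps^{-1/(2d+2)}$ and ends with $\eps^{1/(d+1)}$ rather than $\eps^{2/(2d+1)}$. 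It is not extra slack in the Lipschitz step, which the paper handles by Sobolev embedding on the ball $B_R$ at no cost in powers of $L$.
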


By \prettyref{eq:Wn} and the fact that $\E_\star \K_z = 0$, we have
\begin{equation}\label{eq:normalized-field}
D_zW_n = \sqrt n(\Expect_n-\Expect_\star)\K_z = \frac1{\sqrt n}\sum_{i=1}^n\K_z(X_i),
\end{equation}
where $\K_z$ is given by (\ref{eq:Kz}). This exact centering will be used in the covariance and derivative moment estimates.

\subsection{Discretization and the union bound}

We reduce the continuous infimum over the sphere
to a finite minimum using a Lipschitz bound.

\begin{lemma}\label{lem:covering_lemma}
Let $R\ge1$, $0<\eps<1$, and $L\ge1$. Let $\Lambda_R$ denote the Lipschitz constant of $z\mapsto D_zW_n$ on $\calS_R$ with respect to the Euclidean distance. Set $\delta=\eps/L$, and choose a deterministic maximal $\delta$-separated subset
$\N_\delta\subset\calS_R$. Then $\N_\delta$ is a $\delta$-net with
\begin{equation}
|\N_\delta|\le C_d(R/\delta)^{2d-1},
\label{eq:Ndelta}
\end{equation}
and
\begin{equation}\label{eq:direct-sphere-union}
\Prob\Big(\inf_{z\in\calS_R}\norm{D_zW_n}\le\eps\Big)
\le \sum_{z\in\N_\delta}\Prob\big(\norm{D_zW_n}\le2\eps\big)
+ \Prob\big(\Lambda_R>L\big).
\end{equation}
\end{lemma}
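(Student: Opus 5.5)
The argument is a standard covering plus union bound, and it has two parts: the cardinality bound \prettyref{eq:Ndelta} and the probabilistic reduction \prettyref{eq:direct-sphere-union}.

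\emph{Cardinality.} First, any maximal $\delta$-separated subset of $\calS_R$ is automatically a $\delta$-net. Indeed, if some point were at distance greater than $\delta$ from every element of $\N_\delta$, we could add it and contradict maximality.

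To bound $|\N_\delta|$, we identify $\C^d$ with $\R^{2d}$, so that $\calS_R$ becomes the Euclidean sphere of radius $R$ in $\R^{2d}$. Two cases arise.

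\begin{itemize}
\item If $\delta\ge R$, then every point of $\calS_R$ is within distance $2R\le 2\delta$ of any fixed point. In this regime $|\N_\delta|$ is bounded by a dimensional constant, since the balls of radius $\delta/2$ around the points are disjoint and contained in a ball of radius comparable to $\delta$.
\item If $\delta<R$, we use a volume argument. The balls of radius $\delta/2$ centered at the points of $\N_\delta$ are pairwise disjoint. They are all contained in the shell $\{R-\delta/2\le \norm{x}\le R+\delta/2\}$, whose volume is at most $C_d R^{2d-1}\delta$. Each ball has volume $c_d\delta^{2d}$. Dividing gives $|\N_\delta|\le C_d (R/\delta)^{2d-1}$.
\end{itemize}

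Since $R\ge1$ and $\delta=\eps/L<1$, we have $R/\delta>1$, so the constant from the first case is also absorbed into $C_d(R/\delta)^{2d-1}$.

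\emph{Union bound.} Suppose the event $\{\inf_{z\in\calS_R}\norm{D_zW_n}\le\eps\}$ occurs. By continuity of $z\mapsto D_zW_n$ on the compact set $\calS_R$, the infimum is attained at some $z_0$ with $\norm{D_{z_0}W_n}\le\eps$. (Without continuity, we would instead pick $z_0$ with $\norm{D_{z_0}W_n}\le\eps+\eta$ and let $\eta\downarrow0$.)

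Choose $z\in\N_\delta$ with $\norm{z-z_0}\le\delta$. On the event $\{\Lambda_R\le L\}$,
\[
\norm{D_zW_n}\le \norm{D_{z_0}W_n}+\Lambda_R\,\delta\le \eps+L\cdot\eps/L=2\eps .
\]
This yields the inclusion
\[
\Big\{\inf_{\calS_R}\norm{D_zW_n}\le\eps\Big\}\subseteq\{\Lambda_R>L\}\cup\bigcup_{z\in\N_\delta}\{\norm{D_zW_n}\le2\eps\}.
\]
Because $\N_\delta$ is deterministic and finite, applying the union bound to this inclusion gives \prettyref{eq:direct-sphere-union}.

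\emph{Main care point.} No step here is genuinely hard; the only detail needing attention is the volume count. The shell estimate (rather than the full-ball estimate) is what produces the exponent $2d-1$ instead of $2d$. We also need $\delta\le R$ in that computation, which the case split above handles.
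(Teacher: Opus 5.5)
Your proposal is correct and follows essentially the same route as the paper. Maximality of the separated set gives the $\delta$-covering, and a packing (volume) count on the $(2d-1)$-dimensional sphere gives the cardinality bound; you carry out the volume computation that the paper cites from a textbook, and your case $\delta\ge R$ is actually vacuous since $\delta=\eps/L<1\le R$. On $\{\Lambda_R\le L\}$, the Lipschitz bound moves a point with $\norm{D_{z_0}W_n}\le\eps$ to a net point with $\norm{D_zW_n}\le 2\eps$, and the union bound over the finite deterministic net concludes.
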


\begin{proof}
Maximality implies that $\N_\delta$ is a
$\delta$-covering (see, e.g., \cite[Theorem 27.2]{PW-it}). The cardinality bound follows
from the standard packing bound for the
$(2d-1)$-dimensional sphere; see
\cite[Corollary 27.4]{PW-it}.

On the event $\{\Lambda_R\le L\}$, for every
$w\in\calS_R$ there exists $z\in\N_\delta$
such that $\norm{w-z}\le\delta$. Hence
\[
\norm{D_wW_n}\ge \norm{D_zW_n}-L\norm{w-z} \ge \norm{D_zW_n}-L\delta.
\]
Taking the infimum over $w$ gives
\[
\inf_{w\in\calS_R}\norm{D_wW_n} \ge\min_{z\in\N_\delta}\norm{D_zW_n} - \eps.
\]
The claim follows from the union bound.
\end{proof}

To apply \prettyref{eq:direct-sphere-union}, we next bound the pointwise small-ball probabilities $\Prob(\norm{D_zW_n}\le\eps)$ in
\prettyref{sec:small-ball} and control $\Lambda_R$ in \prettyref{sec:lip}. We then complete the proof of
\prettyref{prop:centered-gradient-lower} in \prettyref{sec:pf-centered-gradient-lower}.

\subsection{Anti-concentration  estimates}
\label{sec:small-ball}

For $z\in\C^d$, let $\tau_z=1\wedge\norm{\Ima z}$.  The following estimate controls the pointwise probabilities in \prettyref{eq:direct-sphere-union}.

\begin{lemma}\label{lem:point-small-ball}
There exists $C>0$ depending only on $K$ (not on $G_\star$), such that the following holds for $R \geq 1$ and $\norm{z}\leq 3R$. Let $\Delta_{n,R}$ be as in
\prettyref{eq:point-resolution}, with its constant chosen sufficiently large.
For $\Delta_{n,R}\le\eps\le1/2$,
\begin{equation}\label{eq:point-small-ball}
\Prob\big(\norm{D_zW_n} \leq \eps\big) \leq C(1+R)^{2d}\frac{\eps^{2d}}{(\eps^2 + \tau_z^2)^{d/2}}.
\end{equation}
\end{lemma}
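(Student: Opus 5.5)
We prove \prettyref{eq:point-small-ball} by a direct Esseen-type small-ball inequality for the normalized sum $D_zW_n=n^{-1/2}\sum_{i}\K_z(X_i)$. We view it as a sum of iid centered random vectors in $\R^{2d}$, namely $(\Rea\K_z,\Ima\K_z)(X_i)$, with no Gaussian limit or quantitative CLT. Esseen's inequality gives, for the ball of radius $\eps$ in $\R^{2d}$,
\[
\Prob(\norm{D_zW_n}\le\eps)\le C_d\,\eps^{2d}\int_{\norm{t}\le 1/\eps}|\phi_n(t)|\,\d t ,
\]
where $\phi_n(t)=\big(\phi(t/\sqrt n)\big)^n$ and $\phi$ is the characteristic function of $Y=(\Rea\K_z,\Ima\K_z)(X)$. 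The task is therefore to show that $\int_{\norm{t}\le1/\eps}|\phi_n|$ is at most $C(1+R)^{2d}(\eps^2+\tau_z^2)^{-d/2}$.

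\textbf{Step 1: covariance lower bound.} First we show $\Cov(Y)\succeq c\,\diag(I_d,\ \tau_z^2 I_d)$ in suitable coordinates, up to $(1+R)^{-2}$ factors. Writing $\K_z=D_zL_z$ and $z=a+ib$, we compute $\E_\star[\K_z\K_z^{*}]$ and $\E_\star[\K_z\K_z^{\top}]$. The first is of order one by the choice of the normalization $w_z$, via \prettyref{eq:normalized-kernel-second-moment} and the lemma on weight bounds. The second is a pseudo-covariance carrying a phase factor $e^{2ib\cdot x}$. Its size relative to the first controls the degeneracy between the real and imaginary parts. When $b=0$, $\K_z$ is real and the imaginary part vanishes identically. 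For $\norm{b}\ge\tau$, Gaussian averaging of the phase gives a damping factor, so the smallest eigenvalue of $\Cov(Y)$ is at least $c\,\tau_z^2/(1+R)^{2}$. The $d$ "real" directions have eigenvalues of order at least $(1+R)^{-2}$. This follows because the factor $(x-z)$ has a nondegenerate covariance under the tilted Gaussian $N(2a,I)$, up to the bounded distortion from $r_\star$ controlled by \prettyref{eq:r-bound}.

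\textbf{Step 2: characteristic-function bounds.} For $\norm{t}\le c\sqrt n/M_3$, where $M_3$ bounds $\E\norm{Y}^3$ (at most $e^{CR^2}$ by the same moment computations), Taylor expansion gives $|\phi(t/\sqrt n)|\le \exp(-t^\top\Cov(Y)t/(4n))$. Hence $|\phi_n(t)|\le\exp(-t^\top\Cov(Y) t/4)$. Integrating this Gaussian over $\norm{t}\le1/\eps$ gives the following contribution. The $d$ nondegenerate directions contribute $(1+R)^{d}$. The $d$ directions with variance $\tau_z^2(1+R)^{-2}$ contribute $\min(1/\eps,(1+R)/\tau_z)^d\lesssim(1+R)^d(\eps^2+\tau_z^2)^{-d/2}$. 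Together this produces exactly the claimed factor.

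\textbf{Step 3: the range $c\sqrt n/M_3\le\norm{t}\le1/\eps$.} This is nonempty only when $\eps$ is small compared with $M_3/\sqrt n$, and is the expected obstacle. The condition $\eps\ge\Delta_{n,R}=Cn^{-1/6}e^{CR^2}$ is designed to dispose of it. I would first avoid it entirely by choosing the constant in $\Delta_{n,R}$ large enough that $1/\eps\le c\sqrt n e^{-CR^2}\le c\sqrt n/M_3$. Since $n^{1/6}\le n^{1/2}$, this already holds. If instead the Taylor range must be restricted to $\norm{t}\lesssim n^{1/6}$ (for instance through a truncation of $Y$ at level $n^{1/3}$ to handle heavy tails), the argument is as follows. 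Truncate $X$ at radius $C\sqrt{\log n}$, exactly as in Property~\ref{property5} of \prettyref{prop:empirical-shell}. Then apply the Taylor bound with the truncated third moment, and absorb the truncation error into $\eps^{2d}$ using $\eps\ge n^{-1/6}e^{CR^2}$. The polynomial factors in $R$ come from the $(x-z)$ factor and from the norm $\norm{z}\le3R$. The assumption $\eps\le1/2$ keeps the small-$\norm{t}$ region inside the integration range.
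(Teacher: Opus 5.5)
You have the same architecture as the paper: Esseen's concentration inequality for the $2d$ real coordinates of $D_zW_n$, the covariance bound $P_z\Sigma_zP_z^\top\succeq c(1+R)^{-2}\diag(I_d,\tau_z^2I_d)$ of Lemma~\ref{lem:point-covariance}, the third-moment bound $m_{3,z}\le e^{CR^2}$, and a Gaussian integral. However, Step~2 does not hold as written.

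Taylor expansion with the full third moment gives only $|\phi(u)|^2\le 1-u^\top\Sigma u+CM_3\norm{u}^3$. To get $|\phi(u)|\le\exp(-u^\top\Sigma u/4)$ from this, you must dominate the cubic term by the quadratic one. Along an eigendirection with eigenvalue $\lambda$, that requires $\norm{u}\lesssim\lambda/M_3$. On your range $\norm{u}\le c/M_3$ this forces $\lambda\gtrsim c$. Yet your own Step~1 says the weak (imaginary) eigenvalues are only of order $\tau_z^2(1+R)^{-2}$, which is arbitrarily small as $\Ima z\to0$. The lemma must hold uniformly in $\tau_z\in[0,1]$ with $\Delta_{n,R}$ independent of $z$, and small $\tau_z$ is exactly the regime that produces the factor $(\eps^2+\tau_z^2)^{-d/2}$.

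Shrinking the range to $\norm{u}\lesssim\lambda_{\min}/M_3$ does not help, because then $\{\norm{t}\le1/\eps\}$ no longer lies inside it and your Step~3 reduction fails. Relatedly, the condition you extract from $\eps\ge\Delta_{n,R}$, namely $1/\eps\le c\sqrt n/M_3$, only gives $M_3\norm{t}^3/\sqrt n\le c\norm{t}^2$. That cannot be absorbed against a nearly degenerate quadratic form. Your remark that the condition holds ``since $n^{1/6}\le n^{1/2}$'' misses why the exponent is $1/6$.

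The repair, which is what the paper does in Lemma~\ref{lmm:esseen}, is to bound the cubic term absolutely rather than relative to the quadratic term. From $|\phi(u)|^2\le\exp(-u^\top\Sigma u+CM_3\norm{u}^3)$ you get, for every $t$, $|\phi_n(t)|\le\exp\{-\tfrac12t^\top\Sigma t+CM_3\norm{t}^3/\sqrt n\}$. On $\norm{t}\le1/\eps$ the cubic term is $O(1)$ as soon as $\sqrt n\,\eps^3\ge M_3$, which is precisely what $\eps\ge Cn^{-1/6}e^{CR^2}$ and $M_3\le e^{CR^2}$ give. Then $e^{-t^\top\Sigma t/2}\le e^{1/2}e^{-t^\top(\Sigma+\eps^2I)t/2}$ on that ball, so the integral is at most $C\det(\Sigma+\eps^2I)^{-1/2}$, with no nondegeneracy of $\Sigma$ needed. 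The covariance bound then gives $\det(\Sigma_z+\eps^2I)^{1/2}\ge c^d(1+R)^{-2d}(\eps^2+\tau_z^2)^{d/2}$.

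Alternatively, you could rescue Step~2 with directional third-moment bounds. After the rotation $P_z$, the weak block $\Ima(e^{-i\phi}\K_z)$ is pointwise of order $\norm{\Ima z}$, so its directional third moments scale like $\tau_z^3$. This is more work than the paper's route.

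Two smaller points:
\begin{itemize}
\item Your Step~1 is only a sketch of Lemma~\ref{lem:point-covariance}. ``Gaussian averaging of the phase'' is not directly available, since the weight $\rho_z$ is not Gaussian. The paper instead uses the pointwise bound $\rho_z\ge c\,\varphi_{\frac12I}$ to reduce every quadratic form to an explicit Gaussian computation.
\item The truncation discussion in Step~3 is unnecessary.
\end{itemize}
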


The proof applies small-ball probability estimate for the empirical average $D_zW_n$ in \prettyref{eq:normalized-field}
based on its second and third moment, which we first control.

\subsubsection{Covariance and third moment}

For $z\in\C^d$ define the real-valued weight
\begin{equation} \label{eq:second-moment-weight}
\rho_z(t) = \varphi(t)\frac{r_\star(2\Rea z)}{r_\star(2\Rea z+t)}, \quad t\in\R^d,
\end{equation}
where $\varphi$ is the standard Gaussian density in $\R^d$, and $r_\star = r_{G_\star}$ is defined by (\ref{eq:rG}). This weight determines the covariance of $\K_z$ after the change of variables $t=x-2\Rea z$, as shown by (\ref{eq:cov_second_moment}) below.

\begin{lemma}\label{lem:weight-bounds}
\begin{enumerate}
\item Define
\begin{equation}\label{eq:gradient-U}
U_a(x) = \sqrt{r_\star(2a)}\frac{e^{a\cdot x-\norm{a}^2}}{r_\star(x)},
\end{equation}
Then with $z = a + bi$, we have
\begin{align*}
\K_z(x) &= U_a(x)e^{ib\cdot (x-a)}(x-z).
\end{align*}
\item For every $x\in \R^{d}$, 
\begin{align}\label{eq:change_measure}
U_a(x)^2p_\star(x) = \rho_z(x - 2a).
\end{align}
\item There exist some $c, C > 0$ depending only on $(M,d)$ such that for any $t\in \R^d$ and $z \in \C^d$,
\begin{align*}
c \varphi_{\frac{1}{2}I}(t) &\leq \rho_z(t) \leq e^{C \norm{t}}\varphi(t).
\end{align*}
Consequently, for every fixed $m\ge 0$,
\begin{equation}
\sup_{z\in\C^d} \int_{\R^d}(1+\norm{t})^m\rho_z(t)\d t \le C_{M,m,d}.
\label{eq:rho-moment}
\end{equation}
\item For every multi-index $\alpha$ with $|\alpha| = m\ge 1$,
\begin{equation}
\sup_{x\in\R^d} |\partial^\alpha\log r_\star(x)| \le C_m M^m.
\label{eq:logr-derivative}
\end{equation}
\end{enumerate}
\end{lemma}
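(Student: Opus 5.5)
Parts 1 and 2 are direct algebra, part 3 reduces to comparing $r_\star$ at two points, and part 4 is a cumulant bound.

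\emph{Part 1.} Start from the formula \prettyref{eq:Kz} for $\K_z$ and write $z=a+ib$. Expanding gives
\[
z\cdot x - z\cdot z/2 - \norm{z}^2/2 = a\cdot x - \norm{a}^2 + i(b\cdot x - a\cdot b).
\]
Here the $\norm{b}^2$ terms cancel: $z\cdot z/2$ contributes $+\norm{b}^2/2$ and $\norm{z}^2/2$ contributes $-\norm{b}^2/2$. The imaginary part is $b\cdot(x-a)$. Factoring out $U_a(x)$ then gives the identity $\K_z(x)=U_a(x)e^{ib\cdot(x-a)}(x-z)$.

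\emph{Part 2.} Use $p_\star=\varphi\, r_\star$ to write
\[
U_a^2 p_\star = r_\star(2a)\,\varphi(x)\,e^{2a\cdot x-2\norm{a}^2}/r_\star(x).
\]
The Gaussian identity $\varphi(x)e^{2a\cdot x-2\norm{a}^2}=\varphi(x-2a)$ turns this into $\rho_z(x-2a)$, with $t=x-2a$.

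\emph{Part 3.} Everything reduces to bounding the ratio $r_\star(2a)/r_\star(2a+t)$, uniformly in $a$.
\begin{itemize}
\item \emph{Upper bound.} Since $\log r_\star$ is convex with gradient equal to a posterior mean in $K$, it is $\gamma_K$-Lipschitz. Hence the ratio is at most $e^{\gamma_K\norm{t}}$.
\item \emph{Lower bound.} Convexity gives $\log r_\star(2a+t)\le \log r_\star(2a)+\nabla\log r_\star(2a+t)\cdot t$. Equivalently, by the Lipschitz bound, $r_\star(2a)/r_\star(2a+t)\ge e^{-\gamma_K\norm{t}}$. Then $\varphi(t)e^{-\gamma_K\norm t}\ge c\,\varphi_{\frac12 I}(t)$, because $e^{-\norm t^2/2-\gamma_K\norm t}\ge c\,e^{-\norm t^2}$ after completing the square (the constant depends only on $\gamma_K$). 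The normalizing constants differ by a factor $2^{d/2}$, which is absorbed into $c$.
\end{itemize}
The moment bound \prettyref{eq:rho-moment} then follows by integrating $(1+\norm t)^m e^{C\norm t}\varphi(t)$, which is finite.

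\emph{Part 4.} The function $\log r_\star(x)=\log\int e^{\theta\cdot x-\norm\theta^2/2}\,G_\star(\d\theta)$ is the cumulant generating function of the exponentially tilted measure $G_x(\d\theta)\propto e^{\theta\cdot x-\norm\theta^2/2}G_\star(\d\theta)$. So $\partial^\alpha\log r_\star(x)$ is a joint cumulant of order $m$ of coordinates of $\theta\sim G_x$. Each such cumulant is a universal polynomial in the moments: a sum over set partitions of products of moments, with coefficient depending only on $m$. Since $G_x$ is supported in $K$, every coordinate satisfies $|\theta_j|\le M$, so every product of moments of total order $m$ is at most $M^m$. This gives $C_mM^m$ uniformly in $x$. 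This cumulant step is the only one needing care, but with the partition formula it is routine; the main bookkeeping concern is keeping all constants independent of $G_\star$, which holds since only $\supp G_\star\subset K$ is used.
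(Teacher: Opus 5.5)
Your proposal is correct and matches the paper's proof step for step: the same algebra in parts 1 and 2, the same completing-the-square bound $\norm{t}^2/2-\gamma_K\norm{t}\ge-\gamma_K^2/2$ for the Gaussian lower bound, and the same moment--cumulant argument under the tilted posterior $G_x$ in part 4. The one small difference is in part 3. You obtain $e^{-\gamma_K\norm{t}}\le r_\star(2a+t)/r_\star(2a)\le e^{\gamma_K\norm{t}}$ from the Lipschitz bound on $\log r_\star$, whose gradient is the posterior mean with norm at most $\gamma_K$. The paper instead writes this ratio directly as $\int_K e^{\theta\cdot t}\,\d G_{2a}(\theta)$. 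Your appeal to convexity is unnecessary, since the Lipschitz bound already gives both directions.
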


\begin{proof}
(1) This follows directly from (\ref{eq:Kz}).

\noindent(2) For $t=x-2a$, we have
\begin{align*}
U_a(x)^2p_\star(x) = \frac{r_\star(2a)}{r_\star^2(x)} e^{2a\cdot x - 2\norm{a}^2} r_\star(x)\varphi(x) = \frac{r_\star(2a)}{r_\star(x)}\varphi(x-2a) = \frac{r_\star(2a)}{r_\star(2a+t)}\varphi(t) = \rho_z(t), 
\end{align*}
where we use the fact that $\varphi(x) \exp(2a\cdot x - 2\norm{a}^2) = \varphi(x-2a)$.

\noindent (3)
For $x\in\R^d$, define the tilted probability measure
\[
\d G_x(\theta)
=\frac{e^{x\cdot\theta-\norm{\theta}^2/2}}
{r_\star(x)}\,\d G_\star(\theta) = \frac{\varphi(x-\theta)}
{p_\star(x)}\,\d G_\star(\theta).
\]
In other words, $G_x$ is the \emph{posterior} distribution of $\theta \sim G_\star$ given $X=\theta+Z=x$.
Then, for every $t\in\R^d$,
\begin{equation}\label{eq:tilted-weight}
\frac{r_\star(x+t)}{r_\star(x)}
=\int_K e^{\theta\cdot t}\,\d G_x(\theta).
\end{equation}
Since $G_x$ is supported on $K=[-M,M]^d$,
we have $\norm{\theta}\le\sqrt d\,M$. Hence
\[
e^{-\sqrt{d} M\norm{t}} \le \frac{r_\star(x+t)}{r_\star(x)} \le e^{\sqrt{d} M\norm{t}}.
\]
Taking $x=2a$ and recalling the definition
of $\rho_z$, we obtain
\begin{align}\label{eq:rho_z}
\varphi(t)e^{-\sqrt{d} M\norm{t}} \le \rho_z(t) \le \varphi(t)e^{\sqrt{d} M\norm{t}}.
\end{align}
Moreover,
\begin{equation}\label{eq:weight-gaussian-lower}
\rho_z(t) \ge 2^{-d/2}e^{-dM^2/2} \varphi_{\frac12 I}(t),
\end{equation}
where we used $\norm{t}^2/2-\sqrt{d} M\norm{t}\ge-dM^2/2$. The upper bound of $\rho_z(t)$ in \prettyref{eq:rho_z} immediately yields the moment bound \prettyref{eq:rho-moment}.

\noindent (4)
For the derivative bound, taking logarithms in \prettyref{eq:tilted-weight} gives
\[
\log r_\star(x+t)-\log r_\star(x)
=\log\int_K e^{\theta\cdot t}\,\d G_x(\theta).
\]
Thus, for every multi-index $\alpha$ with $|\alpha|=m\ge1$,
$\partial^\alpha\log r_\star(x)$ is an $m$-th order joint cumulant under the posterior $G_x$. By the moment--cumulant formula, it is a finite linear combination of products of moments of total degree $m$. Since $|\theta_j|\le M$ on $K$, each such product is bounded by $M^m$, which proves \prettyref{eq:logr-derivative}.
\end{proof}

Define the real random vector and its covariance by
\begin{equation}\label{eq:gradient-covariance}
Y_z = \begin{pmatrix}\Rea\K_z\\\Ima\K_z\end{pmatrix} \in\R^{2d}, \qquad
\Sigma_z = \Cov(Y_z) = \Expect_\star\{Y_zY_z^\top\},
\end{equation}
where no centering is needed because $\Expect_\star\K_z=0$. By
\prettyref{eq:normalized-field}, $\Sigma_z$ is also the covariance of the $2d$ real coordinates of $D_zW_n$, for every $n$.

\begin{lemma}\label{lem:point-covariance}
There are constants $c,C>0$, depending only on $K$ (and not on $G_\star$), such that the following holds for $R\ge1$.
For any $\norm{z} \leq 3R$, with
\begin{align*}
P_z := 
\begin{pmatrix}
\cos \phi I_d & \sin \phi I_d \\
-\sin \phi I_d & \cos \phi I_d
\end{pmatrix},\quad \phi = \Ima(z) \cdot \Rea(z),
\end{align*}
we have 
\begin{equation}\label{eq:gradient-covariance-bounds}
\Cov
\begin{pmatrix}
\Rea(e^{-i\phi} \K_z) \\
\Ima(e^{-i\phi} \K_z)
\end{pmatrix}
= P_z \Sigma_z P_z^\top \succeq c(1+R)^{-2} \diag(I_d,\tau_z^2I_d),
\end{equation}
where $\tau_z = 1 \wedge \norm{\Ima z}$. 
Moreover,
\begin{equation}\label{eq:third-moment}
m_{3,z}\equiv \Expect_\star\norm{Y_z}^3\le e^{CR^2}.
\end{equation}
\end{lemma}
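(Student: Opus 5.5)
The plan is to reduce everything to Gaussian integrals via the change of measure in \prettyref{lem:weight-bounds}(2). Write $z=a+ib$ and $\phi=b\cdot a$. By \prettyref{lem:weight-bounds}(1),
\[
e^{-i\phi}\K_z(x)=U_a(x)\,e^{ib\cdot(x-2a)}(x-a-ib).
\]
The identity $P_z\Sigma_zP_z^\top=\Cov(\Rea(e^{-i\phi}\K_z),\Ima(e^{-i\phi}\K_z))$ is just the fact that multiplying by $e^{-i\phi}$ rotates each coordinate pair $(\Rea,\Ima)$ by the real orthogonal matrix $P_z$. For real $u,v\in\R^d$ put $\xi=u+iv$, so that $u\cdot\Rea W+v\cdot\Ima W=\Rea(\bar\xi\cdot W)$. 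Using $\E_\star\K_z=0$ and $U_a^2p_\star=\rho_z(x-2a)$, the quadratic form equals
\[
Q(u,v)=\int_{\R^d}\rho_z(t)\,g(t)^2\,\d t,\qquad g(t)=\Rea\bigl(e^{ib\cdot t}(\bar\xi\cdot t+c_0)\bigr),\qquad c_0=\bar\xi\cdot(a-ib).
\]
Here $|c_0|\le 3R\norm{\xi}$. By \prettyref{lem:weight-bounds}(3), $\rho_z\ge c\,\varphi_{\frac12 I}$, so it suffices to prove
\[
\E g(T)^2\ge c(1+R)^{-2}\bigl(\norm u^2+\tau_z^2\norm v^2\bigr)\qquad\text{for } T\sim N(0,\tfrac12 I),
\]
uniformly over $b$ and over complex constants $c_0$ with $|c_0|\le 3R\norm{\xi}$.

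Expanding, $g=\cos(b\cdot T)A-\sin(b\cdot T)B$, where $A=u\cdot T+\Rea c_0$ and $B=-v\cdot T+\Ima c_0$. To lower bound $\E g^2$, I will use the projection inequality $\E g^2\ge(\E gh)^2/\E h^2$ with explicit test functions $h$. All the resulting Gaussian expectations are computable in closed form, because $\E[e^{ib\cdot T}p(T)]$ is a polynomial times $e^{-\norm b^2/4}$. The two regimes are as follows.

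\emph{Case $\norm b\ge\beta$}, with $\beta$ a fixed constant. Here the oscillating cross terms $\E[\cos(2b\cdot T)\,\cdot]$ and $\E[\sin(2b\cdot T)\,\cdot]$ are damped by $e^{-\norm b^2}$ times polynomial factors. Hence $\E g^2\approx\frac12(\E A^2+\E B^2)\gtrsim\norm u^2+\norm v^2$, up to errors involving $c_0$. Those errors are controlled by either choosing $\beta$ large, or, when $\norm b$ is only moderate, testing against $h=\cos(b\cdot T)\,u\cdot T-\sin(b\cdot T)(-v\cdot T)$ after subtracting its projection onto the span of $\cos(b\cdot T),\sin(b\cdot T)$. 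This subtraction kills $c_0$ exactly. The factor $(1+R)^{-2}$ gives the needed room here.

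\emph{Case $\norm b<\beta$.} I will decompose $v=v_\parallel+v_\perp$ along $e=b/\norm b$ and Taylor expand in $b$. The $u$-direction is seen by testing against $h=u\cdot T$ minus its projection onto the constants and the $b$-dependent terms. The $v$-direction only enters through $\sin(b\cdot T)(v\cdot T)\approx(b\cdot T)(v\cdot T)$, a quadratic polynomial of size $\norm b\norm v$. Testing against a degree-two Hermite polynomial isolates this contribution, which gives the $\tau_z^2\norm v^2$ scaling. The contaminating $c_0$ terms are of the form $c_0\cos$ and $c_0\sin$, of size $\lesssim R\norm\xi\,\norm b$ in the relevant Hermite components. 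This is exactly why the bound can only be $(1+R)^{-2}$ rather than $\Omega(1)$.

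Making these finite-dimensional Gram-matrix estimates uniform in $(b,c_0)$ is the main obstacle. If the explicit computation gets messy, my fallback is a compactness argument. After normalizing $\norm\xi=1$, the map $(u,v,b,c_0/(1+R))\mapsto\E g^2/(\cdots)$ is continuous and positive on a compact set once the $\norm b\to\infty$ and $\norm b\to0$ limits are handled as above. Positivity holds because $g$ is real analytic and not identically zero.

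For the third moment \prettyref{eq:third-moment}, I bound
\[
\E_\star\norm{Y_z}^3=\int U_a^3p_\star\norm{x-z}^3\,\d x=\int\rho_z(t)\,U_a(t+2a)\,\norm{t+a-ib}^3\,\d t.
\]
By \prettyref{eq:r-bound}, $U_a(t+2a)=\sqrt{r_\star(2a)}\,e^{a\cdot t+\norm a^2}/r_\star(t+2a)\le e^{CR^2+CR\norm t}$. Combining this with $\rho_z(t)\le e^{C\norm t}\varphi(t)$ and $\norm{t+a-ib}^3\le(3R+\norm t)^3$, the Gaussian integral is at most $e^{CR^2}$, as claimed.
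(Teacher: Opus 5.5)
\textbf{There is a genuine gap: the reduction to a decoupled $c_0$.} Once you replace $c_0=\bar\xi\cdot(a-ib)$ by an arbitrary complex constant with $|c_0|\le 3R\norm{\xi}$, the inequality you set out to prove is false. So neither your case analysis nor the compactness fallback can establish it.

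Here is a counterexample to the relaxed inequality. Take $v=0$, $u=e:=b/\norm b$, $\norm b=r=1/(2R)$, and $c_0=i\lambda$ with $\lambda=re^{-r^2}/(1-e^{-r^2})<1/r=2R\le 3R\norm\xi$. With $s=e\cdot T\sim N(0,\tfrac12)$, your function becomes $g=s\cos(rs)-\lambda\sin(rs)$. This $\lambda$ is exactly the $L^2$-projection coefficient of $s\cos(rs)$ onto $\sin(rs)$, so
\[
\E g^2=\tfrac14(1+e^{-r^2})-\tfrac{r^2}{2}e^{-r^2}-\frac{r^2e^{-2r^2}}{2(1-e^{-r^2})}=\frac{r^4}{12}+O(r^6)\asymp R^{-4}.
\]
Your target is $c(1+R)^{-2}\norm u^2=c(1+R)^{-2}$, so the inequality fails for large $R$.

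This is the mechanism you flagged yourself. For $\norm b\asymp 1/R$, the term $\Ima c_0\sin(b\cdot T)\approx \Ima c_0\,(b\cdot T)$ has size $R\norm\xi\norm b\asymp\norm\xi$ in the degree-one Hermite component. It can therefore cancel $u\cdot T$ entirely, leaving only an $O(\norm b^2)$ residual. Consequently, testing against $u\cdot T$ with the $c_0$-directions projected out yields only $\norm b^4\norm u^2$ in your Case II.

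The compactness fallback does not help. It aims at a false statement. Moreover, $\E g^2$ depends on $c_0$ itself, not on $c_0/(1+R)$, so compactness in the normalized parameters gives no uniformity in $R$.

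\textbf{What is missing is the coupling $c_0=(u\cdot a-v\cdot b)-i(u\cdot b+v\cdot a)$.} A large $\Ima c_0$ can only come from $v\cdot a$, and in the true problem the $v$ that produces it pays for itself. The paper's proof makes this exact as follows.
\begin{itemize}
\item Rotate so that $b=re_1$ and write $t=(s,t_\perp)$. Since $t_\perp$ is independent of $s$ under $N(0,\tfrac12 I)$, after using $\rho_z\ge c\varphi_{\frac12 I}$ the quadratic form equals $\E_0(u_1f+v_1\tilde g+\sum_{j\ge2}a_jk_j)^2+\tfrac12\sum_{j\ge2}\E_0k_j^2$. Here $f,\tilde g$ are the real and imaginary parts of $(s+a_1-ir)e^{irs}$, and $k_j=u_j\cos(rs)+v_j\sin(rs)$.
\item Closed-form Gaussian computations give $\E_0 f\tilde g=0$, $\E_0f^2\gtrsim1+a_1^2$, $\E_0\tilde g^2\gtrsim\tau^2(1+a_1^2)$, and $\E_0k_j^2\gtrsim u_j^2+\tau^2v_j^2$.
\item The $v_1a_1\sin(rs)$ piece of $\Ima c_0$ is locked inside $\tilde g$, which is exactly orthogonal to $f$.
\item The $a_\perp\cdot v_\perp\sin(rs)$ piece is dominated by the separate term $\tfrac12\sum_j\E_0k_j^2\gtrsim\tau^2\norm{v_\perp}^2$. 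This costs only the factor $(1+\norm{a_\perp}^2)^{-1}\gtrsim(1+R)^{-2}$.
\end{itemize}

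The rest of your proposal is correct: the identification of the rotated covariance with $P_z\Sigma_zP_z^\top$, the change of measure via $U_a^2p_\star=\rho_z(\cdot-2a)$, and your third-moment bound.
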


\begin{proof}
We prove the covariance lower bound first for
$b=\Ima z\ne0$. We start by specifying the rotation used for the $2d$ real coordinates. Write $z=a+ib$, let $r=\norm{b}>0$, and choose
$Q\in O(d)$ such that $Qb=re_1$.
We apply the same rotation $Q$ to $a$, $b$, and
$t=x-2a$, and use the same notation for the
rotated vectors. In particular,
\[
 b=re_1,\quad t=(s,t_\perp),\quad a=(a_1,a_\perp).
\]
The same rotation is applied to the complex vector
$\K_z$. We also remove the constant phase
$e^{ia\cdot b}$. More precisely, define
\[
 \widetilde{\K}_z = e^{-ia\cdot b}Q\K_z.
\]
These operations correspond to the orthogonal transformation
\begin{align}\label{eq:def_Oz}
 O_z=
 \begin{pmatrix}
  \cos\phi\,Q & \sin\phi\,Q\\
  -\sin\phi\,Q & \cos\phi\,Q
 \end{pmatrix}\in O(2d),
 \qquad \phi=a\cdot b,
\end{align}
on the real vector $Y_z$. Hence the covariance
$\widetilde\Sigma_z$ of $(\Rea\widetilde{\K}_z, \Ima\widetilde{\K}_z)$ satisfies $\widetilde\Sigma_z = O_z\Sigma_zO_z^\top$. It suffices to prove
\begin{align}\label{eq:O_lower_bound}
O_z\Sigma_z O_z^\top \succeq c(1+R)^{-2} \diag(I_d,\tau_z^2I_d). 
\end{align}
Indeed, (\ref{eq:gradient-covariance-bounds}) follows from (\ref{eq:O_lower_bound}), since $O_z = \diag(Q,Q)P_z$, and
\begin{align*}
\diag(Q,Q)^\top \cdot c(1+R)^{-2} \diag(I_d,\tau_z^2I_d) \cdot \diag(Q,Q) = c(1+R)^{-2} \diag(I_d,\tau_z^2I_d).
\end{align*}

By \prettyref{eq:gradient-U} and the identity
$x-z=t+\overline z$, we have, in the rotated
coordinates,
\[
 \widetilde{\K}_z(x) = e^{-ia\cdot b}U_a(x)e^{ib\cdot(x-a)} Q(x-z) = U_a(x)e^{irs}
 \begin{pmatrix}
  s+a_1-ir\\
  t_\perp + a_\perp
 \end{pmatrix},
\]
Let 
\begin{align*}
f(s) &:= \Rea\{(s+a_1-ir)e^{irs}\},\\
g(s) &:= \Ima\{(s+a_1-ir)e^{irs}\}.
\end{align*}
Then we have
\begin{align*}
\Rea \widetilde \K_z &= U_a(x)
\begin{pmatrix}
f(s)\\
\cos (rs) \cdot (t_\perp + a_\perp)
\end{pmatrix},\\
\Ima \widetilde\K_z &= U_a(x)
\begin{pmatrix}
g(s)\\
\sin(rs) \cdot (t_\perp + a_\perp)
\end{pmatrix}.
\end{align*}
This further entails that for any $u,v\in\R^d$ (here and below, when $d=1$, the perpendicular vectors and the corresponding sums are empty),
\begin{align*}
u\cdot \Rea \widetilde \K_z + v \cdot \Ima \widetilde \K_z &= U_a(x)h(t),
\end{align*}
where
\begin{align*}
h(t) &= u_1 f(s) + v_1g(s) + \sum_{j=2}^d (a_j + t_j)k_j(s),
\end{align*}
and $k_j(s) = u_j\cos(rs) + v_j\sin(rs)$. Hence, with $\E_0$ denoting the expectation under $N(0,I_d/2)$,
\begin{align}
\E_\star \Big(u\cdot \Rea \widetilde \K_z + v \cdot \Ima \widetilde \K_z\Big)^2
&= \int U_a(2a+t)^2 h(t)^2p_\star(2a+t) \d t\notag\\
&\overset{(i)}{=} \int h(t)^2\rho_z(t)\d t\notag\\
&\overset{(ii)}{\geq} c\, \E_0[h(t)^2],
\label{eq:cov_second_moment}
\end{align}
where $(i)$ is by the change of variable in (\ref{eq:change_measure}), and $(ii)$ is by the Gaussian lower bound \prettyref{eq:weight-gaussian-lower}. Here $\rho_z$ is understood in the rotated coordinates, and the Gaussian lower bound \prettyref{eq:weight-gaussian-lower} remains valid because the rotated mixing measure is supported on $QK$, whose Euclidean radius is still $\gamma_K=\sqrt d\,M$, and the reference Gaussian distribution $N(0,I_d/2)$ is rotationally invariant.

Let us now lower bound this expectation. We start by noting that 
\begin{align}\label{eq:h_second_moment}
\E_0[h(t)^2] &= \E_0 \Big(u_1 f(s) + v_1g(s) + \sum_{j=2}^d a_j k_j(s)\Big)^2 + \frac{1}{2}\sum_{j=2}^d \E_0\big[k_j^2(s)\big].
\end{align}
Recall that $\tau = 1 \wedge \norm{b} = 1 \wedge r$. Next we claim that
\begin{align}
\E_0 \big(u_1 f(s) + v_1 g(s)\big)^2 &\geq c(u_1^2 + v_1^2\tau^2), \label{eq:cov_claim_1}\\
\E_0[k_j^2(s)] = \E_0 \big(u_j \cos(rs) + v_j\sin(rs)\big)^2 & \geq c(u_j^2 + v_j^2\tau^2)\label{eq:cov_claim_2}.
\end{align}
Suppose that \prettyref{eq:cov_claim_1} and \prettyref{eq:cov_claim_2} hold, then (\ref{eq:h_second_moment}) implies
\begin{align*}
\E_0[h(t)^2] &\geq \frac{1}{2}\sum_{j=2}^d \E_0\big[k_j^2(s)\big] \geq c \sum_{j=2}^d (u_j^2 + \tau^2 v_j^2),
\end{align*}
and moreover, 
\begin{align*}
c(u_1^2 + v_1^2\tau^2) &\leq \E_0 \big(u_1 f(s) + v_1 g(s)\big)^2\\
&\leq 2 \E_0\Big(u_1 f(s) + v_1 g(s) + \sum_{j=2}^d a_jk_j(s)\Big)^2 + 2 \E_0 \Big(\sum_{j=2}^d a_jk_j(s)\Big)^2\\
&\leq 2\E_0[h(t)^2] + 2\norm{a_\perp}^2\E_0 \Big[\sum_{j=2}^d k_j^2(s)\Big] \leq 8 (1 + \norm{a_\perp}^2)\E_0[h(t)^2].
\end{align*}
Together they imply that
\begin{align*}
\E_0[h(t)^2] &\geq \frac{c}{1 + \norm{a_\perp}^2}(\norm{u}^2 + \tau^2\norm{v}^2),
\end{align*}
which, in view of (\ref{eq:cov_second_moment}), concludes the covariance lower bound when $b \neq 0$.

It remains to prove \prettyref{eq:cov_claim_1} and \prettyref{eq:cov_claim_2}. For \prettyref{eq:cov_claim_1}, direct calculation shows that $\E_0[fg] = 0$, and
\begin{align*}
\E_0f^2 &= \frac{(a_1^2+\frac12)(1+e^{-r^2})+r^2}{2} \geq c(1+a_1^2),\\
\E_0g^2 &= \frac{(a_1^2+\frac12)(1-e^{-r^2})+r^2}{2}
 \ge c\tau^2(1+a_1^2),
\end{align*}
where we used $1-e^{-r^2}\geq c(1\wedge r^2) = c\tau^2$.
Consequently, for all $u_1,v_1\in\R$,
\begin{equation} \label{eq:gradient-first-gram}
\E_0(u_1f+v_1g)^2 \ge c(1+a_1^2)(u_1^2+\tau^2v_1^2),
\end{equation}
which proves \prettyref{eq:cov_claim_1}. For \prettyref{eq:cov_claim_2}, similar arguments yield
\begin{equation}\label{eq:gradient-perp-gram}
\E_0[k_j^2] = \frac{1+e^{-r^2}}2u_j^2 + \frac{1-e^{-r^2}}2v_j^2 \ge c(u_j^2+\tau^2v_j^2).
\end{equation}

At $b=0$, the kernel $\K_z(x)=U_a(x)(t+a)$ is real.
A similar argument to \prettyref{eq:cov_second_moment} gives, for every $u\in\R^d$,
\[
 \E_\star[(u\cdot\K_z)^2]
 \ge c\,\E_0\big(u\cdot(t+a)\big)^2
 =c\Big(\frac12\norm{u}^2+(u\cdot a)^2\Big).
\]
Since $\K_z$ is real at these points, the imaginary coordinates vanish. The preceding inequality proves the covariance lower bound when $\tau_z=0$.

\medskip
\noindent\textbf{Bound of third moment.}
Write $z=a+ib$. By \prettyref{eq:gradient-U},
\[
\norm{\K_z(x)}=U_a(x)\norm{x-z}.
\]
Let $\gamma_K=\sqrt d\,M$. Compact support of
$G_\star$ gives
\[
r_\star(2a)\le e^{2\gamma_K\norm{a}},
\qquad
r_\star(x)\ge e^{-\gamma_K\norm{x}-\gamma_K^2/2}.
\]
Consequently, for $\norm{z}\le3R$ and $R\ge1$,
\begin{align*}
U_a(x) \le \exp\Big(-\norm{a}^2+\gamma_K\norm{a}+\frac{\gamma_K^2}{2} +(\norm{a}+\gamma_K)\norm{x} \Big) \le C_K e^{C_KR\norm{x}},
\end{align*}
where we used
$-\norm{a}^2+\gamma_K\norm{a}\le \gamma_K^2/4$.
Since $\norm{x-z}\le \norm{x}+3R$, we obtain
\[
\norm{\K_z(x)}
\le C_K(\norm{x}+3R)e^{C_KR\norm{x}}.
\]
Under $p_\star$, write $X=\theta+G$, where
$\theta\sim G_\star$ and $G\sim N(0,I_d)$
are independent. Since $\norm{\theta}\le \gamma_K$, using the 
Gaussian moment generating function, we get
\begin{align*}
m_{3,z} =\E_\star\norm{\K_z(X)}^3 \le C_K\E\left[
(\norm{X}+3R)^3e^{3C_KR\norm{X}}
\right] \le e^{C_KR^2},
\end{align*}
which proves \prettyref{eq:third-moment}.
\end{proof}

\subsubsection{Small ball probability of empirical sum}

For a random vector $X\in\R^k$ and $\eps\ge0$, define its
Euclidean concentration function by
\begin{equation}\label{eq:ac-concentration-function}
 Q(X,\eps) = \sup_{u\in\R^k}\Prob\big(\norm{X-u}\leq \eps\big).
\end{equation}
The following bound on the concentration function is a corollary of multivariate Esseen inequality \cite[Lemma 6.1]{esseen1968concentration}. 
For completeness, we give a proof in \prettyref{app:esseen}.

\begin{lemma}\label{lmm:esseen}
Let $Y_1,\ldots,Y_n$ be iid centered random vectors in $\R^k$, with covariance matrix $\Sigma\succeq0$, and let
$S_n=n^{-1/2}\sum_{i=1}^nY_i$. Suppose that
$m_3=\Expect\norm{Y_1}^3<\infty$. There is a constant $C$, depending only on $k$, such that for every $\eps>0$ satisfying $\sqrt n\eps^3\ge m_3$,
\begin{equation} \label{eq:ac-resolved}
Q(S_n,\eps) \leq \frac{C\eps^k} {\sqrt{\det(\Sigma+\eps^2I)}}.
\end{equation}
\end{lemma}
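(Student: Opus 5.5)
We will follow Esseen's classical route: bound the concentration function by an integral of the characteristic function over a small frequency ball, and control that characteristic function with the second and third moments.

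\emph{Esseen's inequality.} The multivariate Esseen inequality states that, for any random vector $X\in\R^k$ and $\eps>0$,
\[
Q(X,\eps)\le C_k\,\eps^k\int_{\norm{t}\le 1/\eps}|\Expect e^{i t\cdot X}|\,\d t .
\]
One way to prove it is to majorize the indicator of $\{\norm{x-u}\le\eps\}$ by a nonnegative function whose Fourier transform is supported in $\{\norm{t}\le 1/\eps\}$, for instance a suitably scaled squared Fej\'er-type kernel.

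Applied to $S_n$, this reduces the lemma to the estimate
\[
\int_{\norm{t}\le 1/\eps}|\psi(t/\sqrt n)|^n\,\d t\le \frac{C}{\sqrt{\det(\Sigma+\eps^2 I)}},
\]
where $\psi$ is the characteristic function of $Y_1$.

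\emph{Bounding the characteristic function.} We use the Taylor bound
\[
|\psi(s)|\le 1-\tfrac12 s^\top\Sigma s+\tfrac16 m_3\norm{s}^3 .
\]
Together with $1-x\le e^{-x}$, this gives
\[
|\psi(t/\sqrt n)|^n\le \exp\Bigl\{-\tfrac12 t^\top\Sigma t+\tfrac{m_3\norm{t}^3}{6\sqrt n}\Bigr\},
\]
valid whenever the bracket stays in $[0,1]$. On $\norm{t}\le1/\eps$ the hypothesis $\sqrt n\eps^3\ge m_3$ gives
\[
\frac{m_3\norm{t}^3}{\sqrt n}\le \frac{m_3}{\sqrt n\,\eps^3}\le 1,
\]
so the cubic error term is bounded by an absolute constant. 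The nonnegativity of the bracket is not an issue: we use $|\psi|\le1$ to write $|\psi(s)|\le \max(0,\cdot)$ and bound by $e^{-x}$ directly. We therefore obtain $|\psi(t/\sqrt n)|^n\le e^{1/6}e^{-t^\top\Sigma t/2}$ on the frequency ball.

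\emph{Evaluating the integral.} Diagonalize $\Sigma$ with eigenvalues $\lambda_j$. The domain $\{\norm{t}\le 1/\eps\}$ is contained in the cube $\prod_j\{|t_j|\le 1/\eps\}$ in the eigenbasis. Hence the integral is at most
\[
C\prod_{j=1}^k \int_{|t_j|\le1/\eps} e^{-\lambda_j t_j^2/2}\,\d t_j \le C\prod_{j=1}^k \min\bigl(2/\eps,\ \sqrt{2\pi/\lambda_j}\bigr)\le C'\prod_{j=1}^k(\lambda_j+\eps^2)^{-1/2}.
\]
Here we used $\min(1/\eps,\lambda^{-1/2})\le \sqrt2\,(\lambda+\eps^2)^{-1/2}$. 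The final product is $C'\det(\Sigma+\eps^2I)^{-1/2}$, which together with the Esseen step gives \eqref{eq:ac-resolved}.

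\emph{Main obstacle.} The delicate point is stating Esseen's smoothing inequality with the correct dependence on the frequency radius $1/\eps$. This includes choosing the majorant kernel with compact Fourier support and verifying its positivity near the origin. Everything after that is bookkeeping. We would cite \cite[Lemma 6.1]{esseen1968concentration} for this step, or construct the kernel as $\hat\chi*\hat\chi$ with $\chi$ the indicator of a ball of radius $1/(2\eps)$.
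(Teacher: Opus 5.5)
Your proposal is correct and follows essentially the same route as the paper: Esseen's inequality, a third-order Taylor bound on the characteristic function whose cubic error is made harmless on $\norm{t}\le 1/\eps$ by $\sqrt n\,\eps^3\ge m_3$, and a Gaussian integral. The paper handles that last integral via $e^{-t^\top\Sigma t/2}\le e^{1/2}e^{-t^\top(\Sigma+\eps^2 I)t/2}$ on the ball rather than your coordinatewise bound.

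One small fix: your bound $|\psi(s)|\le 1-\tfrac12 s^\top\Sigma s+\tfrac16 m_3\norm{s}^3$ is not valid for all $s$ (e.g.\ it fails for Rademacher $Y$ at $s=2$). It needs $s^\top\Sigma s\le 2$, which does hold on your frequency range because $s^\top\Sigma s\le m_3^{2/3}\norm{s}^2\le n^{-2/3}$. The paper sidesteps this by bounding $|\phi_{Y_1}(u)|^2=\Expect\cos\langle u,Y_1-Y_1'\rangle$, for which the cubic cosine bound holds for all $u$.
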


\begin{proof}[Proof of \prettyref{lem:point-small-ball}]
Apply \prettyref{lmm:esseen} in dimension $2d$ to the iid centered vectors
$Y_z(X_i)$ from \prettyref{eq:gradient-covariance}.  Their normalized sum
consists of the real and imaginary parts of $D_zW_n$, with the same
Euclidean norm.  By \prettyref{eq:third-moment}, $m_{3,z}\le e^{CR^2}$.
Choosing the constant in \prettyref{eq:point-resolution} sufficiently large
ensures $\sqrt n\,\eps^3\ge m_{3,z}$ whenever
$\eps\ge\Delta_{n,R}$.  Thus \prettyref{eq:ac-resolved} gives
\[
\Prob\big(\norm{D_zW_n}\le\eps\big) \le\frac{C\eps^{2d}}
          {\sqrt{\det(\Sigma_z+\eps^2I)}}.
\]
Applying the covariance lower bound of Lemma \ref{lem:point-covariance}, we get
\[
 \sqrt{\det(\Sigma_z+\eps^2I)}
 \ge \{c(1+R)^{-2}+\eps^2\}^{d/2}
      \{c(1+R)^{-2}\tau_z^2+\eps^2\}^{d/2}
 \ge c^d(1+R)^{-2d}(\tau_z^2+\eps^2)^{d/2}.
\]
This proves \prettyref{eq:point-small-ball}, including $\tau_z=0$.
\end{proof}

\subsection{Bounding the Lipschitz constant}
\label{sec:lip}

Recall that $\Lambda_R$ is the Euclidean Lipschitz constant of $z \mapsto D_zW_n$ on the sphere $\mathcal S_R$, where $D_zW_n$ is given by (\ref{eq:normalized-field}). We overbound it by the Lipschitz constant on the ball $B_R$, which is at most the supremum of the Frobenius norm of the Jacobian. Then we apply a Sobolev inequality on the ball to bound this supremum by integrals of squared derivatives (Sobolev norm), whose  expectation is easy to evaluate.

\begin{lemma}
\label{lem:sphere-variation}
For $R\ge1$, 
\begin{equation}
 \Expect\Lambda_R^2\le C_KR^{4d+8},
 \label{eq:direct-sphere-variation}
\end{equation}
where $C_K$ does not depend on $n$ or $G_\star\in\cP(K)$.
\end{lemma}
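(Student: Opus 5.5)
The plan follows the route sketched just before the statement: pass from the Lipschitz constant to a sup of the Jacobian over the ball, control that sup by a Sobolev norm, and bound the expectation of the Sobolev norm by second moments. View $z=a+ib$ as $\xi=(a,b)\in\R^{2d}$ and regard $\Phi(\xi):=D_zW_n=n^{-1/2}\sum_i\K_z(X_i)\in\C^d\cong\R^{2d}$ as a smooth map on a neighbourhood of $B_R$. Since $B_R$ is convex and contains $\mathcal S_R$, we have
\[
\Lambda_R\le \sup_{\xi\in B_R}\norm{J\Phi(\xi)}_F ,
\]
where $J$ denotes the real Jacobian.

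\emph{Step 1 (Sobolev).} To control this supremum, apply the Sobolev embedding $W^{s,2}\hookrightarrow L^\infty$ in $\R^{2d}$ with integer $s=d+1>2d/2$, on the ball $B_{2R}$, to each entry of $J\Phi$. To keep constants explicit in $R$, rescale the ball to the unit ball, or use a smooth cutoff $\chi$ equal to $1$ on $B_R$ with $|\partial^\beta\chi|\le C$, supported in $B_{R+1}$; the cutoff version avoids growing constants. This gives
\[
\sup_{B_R}\norm{J\Phi}_F^2\le C_d\sum_{1\le|\beta|\le d+2}\int_{B_{R+1}}\norm{\partial_\xi^\beta\Phi(\xi)}^2\,\d\xi .
\]
Taking expectations and using Fubini,
\[
\E\Lambda_R^2\le C\,\mathrm{vol}(B_{R+1})\sup_{\norm{z}\le R+1}\max_{|\beta|\le d+2}\E\norm{\partial^\beta_\xi\Phi}^2 .
\]

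\emph{Step 2 (moments).} Since $\Phi$ is a normalized sum of iid centered terms (because $\E_\star\partial^\beta\K_z=\partial^\beta\E_\star\K_z=0$), we get $\E\norm{\partial^\beta\Phi}^2=\E_\star\norm{\partial^\beta_\xi\K_z}^2$, which has no dependence on $n$. Differentiating $\K_z(x)=U_a(x)e^{ib\cdot(x-a)}(x-z)$ in $(a,b)$ produces factors that are polynomial in $(x,z)$ of degree at most $|\beta|+1$. These come from $\partial_a\log U_a=\tfrac12\nabla\log r_\star(2a)+x-2a$, whose derivatives are bounded by \eqref{eq:logr-derivative}, and from $\partial_b$ of the phase, which gives $x-a$. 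Hence
\[
\norm{\partial^\beta_\xi\K_z(x)}\le C\,U_a(x)\,(1+\norm{x-2a}+\norm{z})^{|\beta|+1}.
\]
By \eqref{eq:change_measure}, with $t=x-2a$,
\[
\E_\star\norm{\partial^\beta\K_z}^2\le C\int(1+\norm t+R)^{2|\beta|+2}\rho_z(t)\,\d t\le C_K R^{2|\beta|+2},
\]
using \eqref{eq:rho-moment}.

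\emph{Step 3 (assembly).} With $|\beta|\le d+2$ the moment bound is $R^{2d+6}$, and the volume factor contributes $R^{2d}$. The crude product is therefore $R^{4d+6}\le R^{4d+8}$, uniformly in $n$ and $G_\star$, which gives \eqref{eq:direct-sphere-variation}.

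The main obstacle is Step 2: writing the $\xi$-derivatives of $U_a$ so that the exponential growth is absorbed exactly by the change of measure $\rho_z$, rather than by lossy bounds like $e^{CR^2}$. The key points are to differentiate $\log U_a$ rather than $U_a$, and to note that $r_\star(2a)$ enters only through $\nabla\log r_\star$, which is bounded by \eqref{eq:logr-derivative}. One must also check that $\norm{x-2a}$, and not $\norm{x}$, appears, so the polynomial weights are integrable against $\rho_z$ with only polynomial dependence on $R$.
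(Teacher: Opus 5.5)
Your proposal is correct and follows essentially the same route as the paper: you bound $\Lambda_R$ by $\sup_{B_R}\norm{J}_F$, control this by a Sobolev norm of order $d+1$ in $\R^{2d}$, interchange expectation and integration, and bound $\E_\star\norm{\partial^\beta\K_z}^2\le C_KR^{2|\beta|+2}$ via the change of measure to $\rho_z$ together with the bounded derivatives of $\log r_\star$. The only difference is that you localize with a cutoff supported in $B_{R+1}$, whereas the paper rescales to the unit ball, which costs it an extra factor $R^{2|\beta|-2d}\le R^2$; this gives you the slightly sharper $R^{4d+6}$. One harmless slip: $\partial_a\log U_a=(\nabla\log r_\star)(2a)+x-2a$, without the factor $\tfrac12$.
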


\begin{proof}
Let $Z(z)\equiv D_zW_n$, a smooth $\C^d$-valued function on $\R^{2d}$ (i.e., we identify the domain $\C^d$ as $\R^{2d}$). Denote its Jacobian by $J=\nabla_{\R^{2d}}Z \in \C^{d\times 2d}$. For a multi-index $\alpha\in\Nzero^{2d}$, write
$\partial^\alpha\equiv\partial^\alpha_{\R^{2d}}$; thus
$\partial^\alpha Z=(\partial^\alpha Z_\ell)_{\ell=1}^d$ is a vector
in $\C^d$.

To  bound $\sup_{B_R}\norm{J}_F^2$, we apply the Sobolev inequality to each real
and imaginary component of $y\mapsto J(Ry)$ on the interior of
$B_1\subset\R^{2d}$. 
Applying \cite[Theorem~8.8(iii)]{LiebLoss2001} (with $n=2d$, $p=2$,
$k=m=d+1$ and $\Omega=B_1$), we get
\[
 \sup_{B_R}\norm{J}_F^2
 \le C_d\sum_{|\beta|\le d+1}
       \int_{B_1}\norm{\partial_y^\beta[J(Ry)]}_F^2\d y.
\]
Since $\partial_y^\beta[J(Ry)]=R^{|\beta|}(\partial^\beta J)(Ry)$,
the change of variables $z=Ry$ gives
\begin{equation}\label{eq:Sobolev-ball}
 \Lambda_R^2\le\sup_{B_R}\norm{J}_F^2
 \le C_d\sum_{|\beta|\le d+1}R^{2|\beta|-2d}
       \int_{B_R}\norm{\partial^\beta J}_F^2\d z
 \le C_dR^2\sum_{1\le|\alpha|\le d+2}
       \int_{B_R}\norm{\partial^\alpha Z}^2\d z.
\end{equation}
Indeed, $J=\nabla Z$ so 
$\norm{\partial^\beta J}_F^2
=\sum_{j=1}^{2d}\norm{\partial^{\beta+e_j}Z}^2$, where $e_j$ is
the $j$th coordinate vector.

It remains to bound the second moments of the derivatives of $Z$
up to order $d+2$. 
Let $z=a+ib$ and
$h_z(x)\equiv w_zL_z(x)$, where 
$L_z$ is the likelihood ratio in \prettyref{eq:Lz}, 
$w_z :=\sqrt{r_\star(2\Rea z)}e^{-\norm{z}^2/2}$ as in \prettyref{eq:gradient-field}, 
and $r_\star = \frac{p_\star}{\varphi}$ as defined in \prettyref{eq:rG}.
Recalling \prettyref{eq:normalized-field}  and \prettyref{eq:Kz}, 
we have
\[
Z(z)
= D_zW_n = \sqrt{n} (\Expect_n - \Expect_\star) \mathbf{K}_z(X) = 
\sqrt{n} (\Expect_n - \Expect_\star) [h_z(X) (X-z)].
\]
Using the independence of the sample points, for every $\alpha$,
\[
 \Expect_\star\norm{\partial^\alpha Z(z)}^2
\le\Expect_\star\norm{\partial^\alpha\K_z(X)}^2=\Expect_\star\norm{\partial^\alpha[h_z(X)(X-z)]}^2. 
\]
The scalar factor satisfies, for every $j=1,\ldots,d$,
\begin{align*}
\partial_{a_j}h_z(x) &= \big(x_j-2a_j+(\partial_j\log r_\star)(2a)-ib_j\big)h_z(x),\\
\partial_{b_j}h_z(x) &= i(x_j-a_j)h_z(x) = i(x_j- 2a_j  + a_j)h_z(x).
\end{align*}
Using \prettyref{eq:logr-derivative}, each derivative of $\log r_\star$ is bounded, and hence for $R \geq 1$,
\begin{align*}
|\partial_{a_j} h_z(x)| + |\partial_{b_j} h_z(x)| \leq C(1 + |x_j - 2a_j| + R)|h_z(x)| \leq C_K R|h_z(x)|(1 + \norm{x-2a}).
\end{align*}
Differentiating repeatedly, we obtain,
for $R\ge1$, $z\in B_R$, and each fixed multi-index $\alpha$,
\[
 \norm{\partial^\alpha\K_z(x)}
 \le C_{K,\alpha}R^{|\alpha|+1}|h_z(x)|
                         (1+\norm{x-2a}^{|\alpha|+1}).
\]
By \prettyref{eq:change_measure},
$|h_z(x)|^2p_\star(x)=\rho_z(x-2a)$, where $\rho_z$ is in \prettyref{eq:second-moment-weight}.
Thus, with $q=|\alpha|+1$, the change of variables $t=x-2a$ gives
\[
 \Expect_\star\!\left[|h_z(X)|^2(1+\norm{X-2a}^q)^2\right]
 =\int_{\R^d}(1+\norm{t}^q)^2\rho_z(t)\,dt
 \le C_{K,\alpha},
\]
the last step applying $(1+\norm{t}^q)^2\le2(1+\norm{t}^{2q})$ and \prettyref{eq:rho-moment}.  In all, we obtain
\begin{equation}
 \Expect\norm{\partial^\alpha Z(z)}^2
 \le C_{K,\alpha}R^{2|\alpha|+2},\qquad z\in B_R.
 \label{eq:gradient-derivative-second-moment}
\end{equation}

Finally, we take expectations in \prettyref{eq:Sobolev-ball} and interchange
expectation and integration.  Since
$1\le|\alpha|\le d+2$ and $\text{vol}(B_R)=C_dR^{2d}$, applying
\prettyref{eq:gradient-derivative-second-moment} yields
$\Expect\Lambda_R^2
 \le C_KR^2R^{2d}R^{2d+6}
 =C_KR^{4d+8}$, as desired.
\end{proof}

\subsection{Proof of \prettyref{prop:centered-gradient-lower}}
\label{sec:pf-centered-gradient-lower}

We apply the discretized union bound \prettyref{eq:direct-sphere-union}:
\begin{align*}
\Prob\Big(\inf_{z\in\calS_R}\norm{D_zW_n}\le\eps\Big)
\le \sum_{z\in\N_\delta}\Prob\big(\norm{D_zW_n}\le2\eps\big)
+ \Prob\big(\Lambda_R>L\big).
\end{align*}
Let $R\geq 2$, $L\ge1$ and $\delta=\eps/L$. By \prettyref{eq:direct-sphere-variation} and Markov's inequality, the second term satisfies $\Prob\big(\Lambda_R>L\big) \leq C_KR^{4d+8}L^{-2}$.

For the first term, since $\Delta_{n,R}\le2\eps<1/2$, the small-ball estimate \prettyref{eq:point-small-ball} gives
\begin{align*}
 \sum_{z\in\N_\delta}\Prob\big(\norm{D_zW_n}\le2\eps\big)
 \le 
    CR^{2d} \eps^{2d} |\N_\delta| 
    + CR^{2d}\eps^d\sum_{z\in\N_\delta: \norm{\Ima z}\le1}
       1\wedge\left(\frac{\eps}{\norm{\Ima z}}\right)^d.
\end{align*}
The first contribution is at most
$CR^{4d-1}L^{2d-1}\eps$ 
by the cardinality bound \prettyref{eq:Ndelta} for $\N_\delta$.

To bound the second sum, we apply a peeling argument to handle the dependence of the small-ball probability on $\norm{\Ima z}$.
For $0\le t\le1$, let $M(t)=\#\{z\in\N_\delta:\norm{\Ima z}\le t\}$, which we bound using a volume calculation.
Since $\N_\delta$ is a $\delta$-packing, the balls
$B(z,\delta/3)$ centered at these points are disjoint and lie in the region
\[
 \left\{w\in\C^d:
 R-\delta/3\le \norm{w}\le R+\delta/3,\quad
 \norm{\Ima w}\le t+\delta/3\right\},
\]
which has volume at most $CR^{d-1}\delta(t+\delta)^d$. Indeed, with $A_\rho(u)$ denoting the surface area of $\{z \in \C^d: \norm{z} = \rho, \norm{\Ima z} \leq u\}$ in dimension $2d-1$, we have for $d\geq 2$
\begin{align*}
A_\rho(u) &= |S^{d-1}|\rho\int_{\norm{b}\leq u\wedge \rho} (\rho^2 - \norm{b}^2)^{(d-2)/2} \d b = |S^{d-1}|^2\rho \int_0^{u\wedge \rho} s^{d-1}(\rho^2-s^2)^{(d-2)/2}\d s\\
&\leq |S^{d-1}|^2 \rho^{d-1}  \int_0^{u\wedge \rho} s^{d-1}\d s \leq C_d \rho^{d-1}u^d,
\end{align*}
where $S^{d-1}$ denotes the unit sphere in $d$ dimensions, and $|S^{d-1}| = \frac{2\pi^{d/2}}{\Gamma(d/2)}$ is its surface measure in dimension $d-1$; for $d=1$, we have $A_\rho(u) = 4\rho\arcsin(\min\{u/\rho, 1\}) \leq 2\pi u$ for every $\rho > 0, u\geq 0$. Then the volume is bounded by $\int_{R-\delta/3}^{R+\delta/3} A_\rho(t+\delta/3) \d \rho \leq CR^{d-1}\delta(t+\delta)^d$, as claimed. Since the balls $\{B(z,\delta/3): z\in \calN_\delta\}$ are disjoint, dividing the volume bound by the $2d$-dimensional volume of $B(0,\delta/3)$ yields
\begin{equation}
 M(t)\le CR^{d-1}\delta^{-(2d-1)}(t+\delta)^d.
 \label{eq:direct-sphere-tube}
\end{equation}
Thus
\begin{align}
\sum_{z\in\N_\delta: \norm{\Ima z}\le1}
       1\wedge\left(\frac{\eps}{\norm{\Ima z}}\right)^d = \int_0^1 \left(1\wedge \frac{\eps}{t}\right)^d \d M(t) &= \eps^d M(1) + \int_{\eps}^1 \frac{d\eps^d}{t^{d+1}}M(t)\d t \notag \\
       &\overset{\prettyref{eq:direct-sphere-tube}}{\le} CR^{d-1}\delta^{-(2d-1)}\eps^d[1+\log(1/\eps)].  \label{eq:direct-sphere-peeling}
\end{align}
Combining these estimates in \prettyref{eq:direct-sphere-union} and substituting
$\delta=\eps/L$, we obtain
\begin{equation}\label{eq:direct-sphere-bound}
 \Prob\sth{\inf_{z\in\mathcal S_R}\norm{D_zW_n}\le\eps}
 \le C\{R^{4d+8}L^{-2}
       +L^{2d-1}\eps[R^{3d-1}\log(1/\eps)+R^{4d-1}]\}.
\end{equation}
Choosing $L=\eps^{-1/(2d+2)}$,
each term in \prettyref{eq:direct-sphere-bound} is at most
$CR^{4d+8}\eps^{1/(d+1)}$, proving \prettyref{eq:direct-sphere-final}.

\section{Upper bound for gradient growth}
\label{sec:growth}

We bound the supremum $M_{4R}$ of Proposition \ref{prop:jensen-critical-count} again using the decomposition $F_{n,G}=W_n-T_nf_{n,G}$ and machinery developed in \Cref{sec:shell,sec:centered-gradient}:  The Lipschitz estimate from \prettyref{sec:lip} controls the supremum of $\|D_zW_n\|$ on a sphere, and the population and empirical approximation bounds from
\prettyref{sec:shell} control $\|D_zT_nf_{n,G}\|$.

\begin{lemma}\label{lem:growth}
There are constants $C,c,R_0,n_0>0$, depending only on $K$, such that for every $G_\star\in\cP(K)$, $h>1$, $R\ge R_0$, and $n\ge n_0$ satisfying $\log n\ge C(R^2+\log h)$, with probability at least $1-n^{-c}-Ce^{-2R^2}$,
\begin{equation}
 \sup_{\substack{G\in\cP(K)\\\sqrt nH(p_G,p_\star)\le h}}
 \sup_{z\in\mathbb D_{4R}^d}\norm{\nabla F_{n,G}(z)}
 \le e^{CR^2}\left(h+\sqrt n\,e^{-cR^2\log R}\right).
 \label{eq:Fn-growth}
\end{equation}
For each integer $s\ge1$, there is a constant $C_{K,s}>0$, depending only on $K,s$, such that if $G_\star\in\cP_s(K)$ and $R\ge C_{K,s}$, then, with the same probability guarantee,
\begin{equation}
 \sup_{\substack{G\in\cP(K)\\\sqrt nH(p_G,p_\star)\le h}}
 \sup_{z\in\mathbb D_{4R}^d}\norm{\nabla F_{n,G}(z)}
 \le h e^{CR^2}.
 \label{eq:Fn-growth-atomic}
\end{equation}
\end{lemma}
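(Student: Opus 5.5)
Throughout, $z=a+ib$ ranges over the polydisk $\mathbb D_{4R}^d$, whose points satisfy $\norm{z}\le 4\sqrt d\,R$. The plan is to use the decomposition $\nabla F_{n,G}=\nabla W_n-\nabla T_nf_{n,G}$ from \prettyref{eq:decomposition} and to undo the normalization \prettyref{eq:gradient-field}. By \prettyref{eq:r-bound}, $1/w_z=e^{\norm{z}^2/2}/\sqrt{r_\star(2a)}\le e^{\norm{z}^2/2+\gamma_K\norm{z}+\gamma_K^2/2}\le e^{CR^2}$. It therefore suffices to bound $\sup_{z}\norm{D_zW_n}$ and $\sup_{z}\norm{D_zT_nf_{n,G}}$ by $e^{CR^2}$ times the claimed right-hand side. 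Since the paper's estimates are stated on spheres, I will cover the polydisk by the spheres $\mathcal S_\rho$ with $\rho\le R':=4\sqrt d\,R$. I also need the population and empirical bounds for every radius $\rho\in[R_0,R']$; for $\rho<R_0$ a crude bound suffices. The constants $C$ in the sample-size condition get enlarged accordingly.

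\emph{Fluctuation term.} I would use the Sobolev estimate \prettyref{eq:Sobolev-ball} in its sup form, applied to $Z$ itself rather than to $J$. That is, apply \cite[Theorem~8.8(iii)]{LiebLoss2001} on $B_{R'}$ to get
\[
\sup_{B_{R'}}\norm{Z}^2\le C_d\sum_{|\alpha|\le d+1}R'^{2|\alpha|-2d}\int_{B_{R'}}\norm{\partial^\alpha Z}^2\,\d z .
\]
The derivative moment bound \prettyref{eq:gradient-derivative-second-moment}, including $\alpha=0$ where $\Expect\norm{Z(z)}^2\le C$, then gives $\Expect\sup_{B_{R'}}\norm{D_zW_n}^2\le C_KR^{C}$. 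Markov's inequality yields $\sup_{B_{R'}}\norm{D_zW_n}\le e^{R^2}$ outside an event of probability at most $C_KR^Ce^{-2R^2}\le Ce^{-R^2}$. To reach the stated $e^{-2R^2}$, I would apply Markov at threshold $e^{2R^2}$ instead, which is still $\le e^{CR^2}$. Multiplying by $1/w_z$ gives $\norm{\nabla W_n}\le e^{CR^2}$, which is absorbed into $he^{CR^2}$ since $h>1$.

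\emph{Estimation-error term.} For the population part $Tf_{n,G}$, I rerun \prettyref{lem:population-shell} on each sphere $\mathcal S_\rho$. Its proof is pointwise in $z$, so it gives $\norm{D_zTf_{n,G}}\le Ch+C\sqrt n e^{-c\rho^2\log\rho}$ for $\rho\ge R_0$. For $\rho<R_0$, the Cauchy--Schwarz bound \prettyref{eq:tail-shell-bound} together with \prettyref{eq:low-poly} crudely gives $\norm{D_zTf_{n,G}}\le C\sqrt n H\,e^{C}\le Ch$; this uses that $\norm{\Delta r_G}_{L_2(\varphi)}\lesssim H$ up to constants, which follows by the same Cauchy--Schwarz trick as in \prettyref{eq:empirical-variance-raw}.

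The remainder term $C\sqrt n e^{-c\rho^2\log\rho}$ does not decay for small $\rho$. I avoid it by choosing the degree as $m=\lfloor\kappa R^2\rfloor$, tied to the outer radius $R$ rather than to $\rho$. The cost is that Case II of the proof of \prettyref{lem:population-shell} fails, but there the exponent $e^{C\sqrt m+3\gamma_K\norm{a}}$ is simply bounded by $e^{CR^2}$. Similarly, the translation bound of \prettyref{lem:Gaussian-translation} contributes $e^{\norm{z}\sqrt m}\le e^{CR^2}$. Both are acceptable losses here. The remainder contributes $C(1+R)e^{3\gamma_KR'}\sqrt n\norm{R_m}_{L_2(\varphi)}$, which is at most $e^{CR^2}\sqrt n e^{-cR^2\log R}$ by \prettyref{eq:absolute-tail}. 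In the atomic case, \prettyref{eq:relative-tail} gives instead $e^{CR^2}h$. This yields $\norm{D_zTf_{n,G}}\le e^{CR^2}(h+\sqrt n e^{-cR^2\log R})$ uniformly on the polydisk.

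For the empirical part $V_{n,G}$, I repeat \prettyref{prop:empirical-shell} with $\mathcal S_R$ replaced by the ball $B_{R'}$. The variance bound \prettyref{eq:empirical-variance} and Properties~\ref{property1}--\ref{property5} were already stated for $\norm{z}\le R$ up to constants. The $\delta_n$-net of $B_{R'}$ still has $\log$-cardinality $O(\log n)$. This gives $\sup\norm{D_zV_{n,G}}\le n^{-c}$ with probability $1-n^{-c}$.

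Combining the three bounds and multiplying by $e^{CR^2}$ proves \prettyref{eq:Fn-growth} and \prettyref{eq:Fn-growth-atomic}. I expect the main obstacle to be the population bound on the whole polydisk, specifically the bookkeeping that lets the Hermite degree $m$ be fixed by $R$ while $\norm{z}$ ranges over $[0,R']$. The point is that we only need growth $e^{CR^2}$, not decay, so every step of \prettyref{lem:population-shell} may be done crudely, and the only essential input is the $e^{-cm\log m}$ smallness of the remainder.
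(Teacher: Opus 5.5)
Your argument is correct, but it is organized differently from the paper's. The paper never leaves the outer sphere. Since $\norm{\nabla F_{n,G}}$ is plurisubharmonic, the maximum principle gives $\sup_{\mathbb D_{4R}^d}\norm{\nabla F_{n,G}}\le\sup_{\mathcal S_\tau}\norm{\nabla F_{n,G}}$ with $\tau=4\sqrt d\,R$. It then applies \prettyref{lem:population-shell} and \prettyref{prop:empirical-shell} verbatim at the single radius $\tau$. Finally, it bounds $\sup_{\mathcal S_\tau}\norm{D_zW_n}$ by $\norm{D_{z_0}W_n}+2\tau\Lambda_\tau$ using \prettyref{lem:sphere-variation}.

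Because you skip the maximum principle, you must control every radius $\rho\in[0,4\sqrt d\,R]$. At small $\rho$ the shell lemma is useless, since its degree $m$ is tied to $\rho$ and leaves the non-decaying term $\sqrt n\,e^{-c\rho^2\log\rho}$. Your fix of freezing $m=\lfloor\kappa R^2\rfloor$ is exactly right. Your Sobolev bound applied to $Z$ itself on $B_{R'}$, and the ball version of \prettyref{prop:empirical-shell}, are routine.

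Once $m$ is frozen, the Case I/II contour machinery becomes unnecessary. The Cauchy--Schwarz estimate behind \prettyref{eq:tail-shell-bound} holds for any $L_2(\varphi)$ function and any $\norm{z}\le R'$. Applying it to $P_m$ together with \prettyref{eq:low-poly} gives directly
$\sqrt n\norm{D_zT(P_m/r_G)}\le C(1+R')e^{3\gamma_KR'}h\,e^{C\sqrt m}\le he^{CR}$.
So the paper's route buys brevity through one complex-analytic step. Yours is self-contained and shows that only crude $L_2$ bounds are needed for the growth estimate.

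One paragraph of your proposal should be deleted: the treatment of $\rho<R_0$ via $\norm{\Delta r_G}_{L_2(\varphi)}\lesssim H$. The Cauchy--Schwarz trick of \prettyref{eq:empirical-variance-raw} bounds the squared norm linearly in $H$. It therefore yields only $\norm{\Delta r_G}_{L_2(\varphi)}\lesssim\sqrt H$, hence $\sqrt n\,\norm{\Delta r_G}_{L_2(\varphi)}\lesssim\sqrt h\,n^{1/4}$, which is far too large. This does not damage the proof, because your fixed-degree split $\Delta r_G=P_m+R_m$ also covers $\rho<R_0$. That paragraph is simply not correct as justified, and it is superseded by the split.
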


\begin{proof}
Let $\tau=4\sqrt{d}R$ so that $\mathbb D_{4R}^d \subset B_\tau$. 
Taking the supremum over $G\in \cP(K)$ and $\sqrt nH(p_G,p_\star)\le h$, we have
\begin{align*}
\sup_{G,z\in\mathbb D_{4R}^d}\norm{\nabla F_{n,G}(z)} \overset{(i)}{\leq} Ce^{C_KR^2} \sup_{G,z\in \calS_\tau} \norm{D_z F_{n,G}(z)} \overset{(ii)}{\leq} Ce^{C_KR^2} \Big(\sup_{z\in \calS_\tau} \pnorm{D_z W_n}{} + \sup_{G,z\in \calS_\tau} \pnorm{D_z T_n f_{n,G}}{}\Big),
\end{align*}
where $(i)$ applies the maximum principle 
(see \cite[Chapter~II, \S 12, Theorem~2]{Shabat})
$\sup_{z\in B_\tau}\|\nabla F(z)\|
=
\sup_{z\in\mathcal S_\tau}\|\nabla F(z)\|$
and the lower bound in (\ref{eq:r-bound}); $(ii)$ follows from the decomposition \prettyref{eq:decomposition}.

To control the first term, let $Z(z)=D_zW_n$. Fix an arbitrary $z_0\in\mathcal S_\tau$. Recalling from \prettyref{sec:lip} that $\Lambda_\tau$ denotes the Lipschitz constant of $z \mapsto D_z W_n$ over $\calS_\tau$, we have
\[
 \sup_{z\in\mathcal S_\tau}\norm{Z(z)}
 \le\norm{Z(z_0)}+2\tau\Lambda_\tau.
\]
Using \prettyref{eq:gradient-derivative-second-moment} with $\alpha=0$
and \prettyref{lem:sphere-variation}, we obtain
\[
 \Expect\sup_{z\in\mathcal S_\tau}\norm{Z(z)}^2
 \le2\Expect\norm{Z(z_0)}^2+8\tau^2\Expect\Lambda_\tau^2
 \le C_K\tau^{4d+10}.
\]
Consequently, Markov's inequality gives
\[
 \Prob\left\{\sup_{z\in\mathcal S_\tau}\norm{D_zW_n}
                   >e^{2R^2}\right\}
 \le C_K\tau^{4d+10}e^{-4R^2}
 \le C_Ke^{-2R^2},
\]
for large enough $R$.

For the second term, we apply \prettyref{lem:population-shell} and \prettyref{prop:empirical-shell} at radius $\tau$.  The condition on the sample size holds for sufficiently large $C$, since $\tau^2=16dR^2$. By the decomposition $T_nf_{n,G}=Tf_{n,G}+V_{n,G}$ in (\ref{eq:empirical-decomp}), except on an event of probability at
most $n^{-c}$,
\[
 \sup_{\substack{G\in\cP(K)\\\sqrt nH(p_G,p_\star)\le h}}
 \sup_{z\in\mathcal S_\tau}\norm{D_zT_nf_{n,G}}
 \le Ch e^{-c\tau} + C\sqrt n\,e^{-c\tau^2\log\tau} + n^{-c}.
\]
If $G_\star\in\cP_s(K)$ and $R\ge C_{K,s}$, then $\tau\ge R\ge C_{K,s}$, so the middle term $C\sqrt n\,e^{-c\tau^2\log\tau}$ is not necessary by \prettyref{eq:population-shell-atomic}.

Combining the two estimates yields
\begin{align*}
 \sup_{\substack{G\in\cP(K)\\\sqrt nH(p_G,p_\star)\le h}}
   \sup_{z\in\mathbb D_{4R}^d}\norm{\nabla F_{n,G}(z)}
 & \le e^{C_KR^2}
   \left(e^{2R^2}+Ch e^{-c\tau}
                 +C\sqrt n\,e^{-c\tau^2\log\tau}+n^{-c}\right)\\
 &\le e^{CR^2}\left(h+\sqrt n\,e^{-cR^2\log R}\right).
\end{align*}
Here $h>1$ and $\tau=4\sqrt d\,R$ allow the fixed factors and
$e^{2R^2}$ to be absorbed into $e^{CR^2}$. This proves \prettyref{eq:Fn-growth}.  Omitting the term $C\sqrt n\,e^{-c\tau^2\log\tau}$ proves \prettyref{eq:Fn-growth-atomic}, with the same failure probability.
\end{proof}

\section{Proof of the main theorems}
\label{sec:proofs}

We first record a consequence of \prettyref{prop:centered-gradient-lower} used in both proofs. Fix small enough $b>0$ (see (\ref{eq:general-fitted-sphere}) below for details) and $C_R<\infty$. Since $L_n=o(\log n)$, for all sufficiently large $n$ and all $R\ge R_0$ with $R^2\le C_RL_n$, we have
\[
\Delta_{n,R}=Cn^{-1/6}e^{CR^2}
\le e^{-2bR}<1/4,
\]
since $\log(\Delta_{n,R}/e^{-2bR})
= -\frac16\log n + CR^2+2bR+O(1)
\to-\infty$ uniformly in this range. Applying \prettyref{eq:direct-sphere-final} with
$\eps=e^{-2bR}$ and absorbing the polynomial factor $R^{4d+8}$ therein into the exponential (in $R$) term $\eps^{1/(d+1)}$ gives
\begin{equation}\label{eq:centered-gradient-lower}
\Prob\Big(\inf_{z\in\mathcal S_R}
\norm{D_zW_n}\le e^{-2bR} \Big) \le Ce^{-cR},
\end{equation}
where $C,c$ may depend on $K,b,C_R$ but not on $n,R$, or $G_\star\in\cP(K)$.

\subsection{Proof of Theorem~\ref{thm:continuous-prior}}
\label{sec:arbitrary}

To prove \prettyref{thm:continuous-prior}, we need to bound the probability $a_n(h)$ of the bad Hellinger event (\ref{eq:fitted-hellinger-failure}) required by Proposition \ref{prop:fitted-shell}. This ingredient is supplied by the following lemma.

\begin{lemma}\label{lem:universal-H}
There are $C,c>0$, depending only on $K$, such that for all sufficiently large $n$,
\begin{equation}
 \sup_{G_\star\in\cP(K)}
 \Prob_{G_\star}\left\{
   \sqrt n\,H(p_{\widehat G},p_\star)>C(\log n)^{(d+1)/2}
 \right\}\le n^{-c}.
 \label{eq:universal-H}
\end{equation}
\end{lemma}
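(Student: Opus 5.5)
This is the standard Hellinger rate for the NPMLE over compactly supported priors, as in \cite[Theorem~2.1]{SahaGuntuboyina2020} (and \cite{Zhang2009} for $d=1$). I would reprove it via the usual likelihood-ratio/bracketing argument, tracking uniformity in $G_\star\in\cP(K)$. The squared rate is $\eps_n^2=C(\log n)^{d+1}/n$, so $\sqrt n\,\eps_n=C(\log n)^{(d+1)/2}$, which is exactly the threshold in \prettyref{eq:universal-H}.

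\emph{Step 1 (covering).} I would use the sup-norm metric entropy bound already invoked in the proof of \prettyref{prop:empirical-shell}. By \cite[Theorem~4.1]{SahaGuntuboyina2020}, the class $\{p_G:G\in\cP(K)\}$ admits an $\eta$-cover in $\norm{\cdot}_{\infty,t}$ on $\{\norm{x}\le t\}$ with log-cardinality at most $C_K(\log(1/\eta))^{d+1}$ for $t\asymp\sqrt{\log(1/\eta)}$. Take $\eta=n^{-2}$ and $t=t_n=C_{\mathrm{tr},K}\sqrt{\log n}$. For each net element $p_j$, form the upper envelope
\[
u_j=(p_j+\eta)\Indc\{\norm{x}\le t_n\}+\bar p\,\Indc\{\norm{x}>t_n\},\qquad \bar p(x)=\sup_{\theta\in K}\varphi(x-\theta).
\]
Then $p_G\le u_j$ whenever $p_G$ is $\eta$-close to $p_j$. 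Also $\int u_j\le 1+O(\eta t_n^d)+O(n^{-10})$, since $\int_{\norm{x}>t_n}\bar p$ is Gaussian-small uniformly over $K$.

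\emph{Step 2 (likelihood-ratio bound).} Suppose $H(p_{\widehat G},p_\star)>\eps$, and let $j$ be the index covering $p_{\widehat G}$. The maximizing property $\prod_i p_{\widehat G}(X_i)\ge\prod_i p_\star(X_i)$ gives $\prod_i u_j(X_i)/p_\star(X_i)\ge1$. By Markov's inequality applied to the square root,
\[
\Prob_{G_\star}\Big(\prod_i\sqrt{u_j/p_\star}(X_i)\ge1\Big)\le\Big(\int\sqrt{u_jp_\star}\Big)^n .
\]
Next I would show $\int\sqrt{u_jp_\star}\le 1-\tfrac12H^2(p_j,p_\star)+O(\sqrt{\eta}\,t_n^{d/2})+O(n^{-5})$. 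Since $H(p_j,p_{\widehat G})\lesssim \sqrt{\eta}\,\mathrm{poly}(\log n)+n^{-5}$, this quantity is at most $1-\eps^2/4$ once $\eps^2\gg n^{-1}$. A union bound over the net then gives
\[
\Prob\le\exp\big(C_K(\log n)^{d+1}-n\eps^2/4\big)\le n^{-c}
\]
for $\eps^2=C(\log n)^{d+1}/n$ with $C$ large.

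\emph{Uniformity.} The covering is of the whole class, independent of $G_\star$. The tail estimate for $\bar p$ and the envelope bounds depend only on $\gamma_K$. Hence all constants are uniform over $G_\star\in\cP(K)$.

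\emph{Main obstacle.} The delicate point is the bracketing at the tails: converting a sup-norm cover on a ball into Hellinger-controlled upper brackets with total mass $1+o(1/n)$. I expect the truncation at $t_n\asymp\sqrt{\log n}$ with the Gaussian envelope $\bar p$ to handle this, using $\sqrt{a+b}\le\sqrt a+\sqrt b$ to absorb the $\eta$ and tail errors. Alternatively, I would simply cite \cite[Theorem~2.1]{SahaGuntuboyina2020}, which states this tail probability bound with constants depending only on $K$ and $d$.
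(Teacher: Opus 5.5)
Your argument is correct, and your fallback is exactly the paper's proof. The paper simply invokes \cite[Theorem~2.1]{SahaGuntuboyina2020} with $S=K$, $\alpha=\beta=1/2$, $t=2$ and truncation $M_n=\sqrt{10\log n}$. It notes that the distance-moment term vanishes because $G_\star(K)=1$, and obtains failure probability $2n^{-4}$ uniformly in $G_\star$.

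Your main route instead re-derives that theorem in the compact-support case by the classical Zhang/Saha--Guntuboyina argument:
\begin{itemize}
\item a sup-norm cover on a $\sqrt{\log n}$-ball,
\item a Gaussian envelope outside the ball,
\item Markov's inequality on the square-root likelihood ratio,
\item a union bound over the net.
\end{itemize}
This makes the proof self-contained and the uniformity over $G_\star\in\cP(K)$ transparent, at the cost of length. The citation is shorter and gives an explicit failure exponent.

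If you write it out, tighten three details.
\begin{itemize}
\item Choose the net elements inside the class, $p_j=p_{G_j}$, as the paper does in the proof of \prettyref{prop:empirical-shell}. Then $\int p_j=1$ and the identity $\int\sqrt{p_jp_\star}=1-\frac12H^2(p_j,p_\star)$ is legitimate.
\item Restrict the union bound to the deterministic set of indices whose cells contain some $p_G$ with $H(p_G,p_\star)>\eps$. For those indices, $H(p_j,p_\star)\ge\eps-O(n^{-1}(\log n)^{d/4})$.
\item The error terms are of order $\sqrt{\eta}\,t_n^{d/2}\asymp n^{-1}(\log n)^{d/4}$ when $\eta=n^{-2}$. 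So the condition you need is $\eps^2\gg n^{-1}(\log n)^{d/4}$, not $\eps^2\gg n^{-1}$. Your choice $\eps^2=C(\log n)^{d+1}/n$ satisfies it, giving a bound of order $\exp\{C_K(\log n)^{d+1}-cn\eps^2\}\le n^{-c}$.
\end{itemize}
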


\begin{proof}
Applying \cite[Theorem~2.1]{SahaGuntuboyina2020} with $S=K$, $\alpha=\beta=1/2$, $t=2$, and truncation parameter $M_n=\sqrt{10\log n}$ (the distance-moment term $\mu_p$ in their equation (2.3) vanishes since $G_\star(K)=1$), we have
$\sqrt nH(p_{\widehat G},p_\star)\le C_K(\log n)^{(d+1)/2}$ with failure
probability at most $2n^{-4}$, uniformly over $G_\star\in\cP(K)$.
This implies \prettyref{eq:universal-H} for sufficiently large $n$.
\end{proof}
\begin{remark}[Bootstrapping the Hellinger guarantee] 
We only require much weaker density estimation results than \prettyref{lem:universal-H}. Indeed, for our proof of \prettyref{thm:continuous-prior} below to go through, the factor $(\log n)^{(d+1)/2}$ in \prettyref{lem:universal-H} can be replaced by $e^{o(\sqrt{L_n})}$. In other words, even a weak statistical guarantee on $\sqrt nH(p_{\widehat G},p_\star)$ suffices to establish the structural property $|\supp(\widehat G)|=O_{\Prob}(L_n^d)$, which in turn yields a sharper upper bound on $\sqrt nH(p_{\widehat G},p_\star)$.
\end{remark}

\begin{proof}[Proof of \prettyref{thm:continuous-prior}]
We apply the general bound in \prettyref{prop:fitted-shell}.  Fix $B\ge B_0$ and set
\begin{equation}
 h=C_K(\log n)^{(d+1)/2},\qquad R=B\sqrt{L_n},
 \label{eq:general-scales}
\end{equation}
where $C_K$ is the constant in \prettyref{lem:universal-H}, and that lemma gives $a_n(h)\le n^{-c}$.  Since $R^2+\log h=O(L_n)=o(\log n)$, the sample-size condition \prettyref{eq:fitted-sample-condition} holds for sufficiently large $n$, hence \prettyref{prop:fitted-shell} gives
\begin{equation*}
\sup_{z\in\mathcal S_R} \norm{D_zT_nf_{n,\widehat G}} \le Ch e^{-cR}+C\sqrt n\,e^{-cR^2\log R}+n^{-c}.
\end{equation*}
Choose $b > 0$ to be a sufficiently small constant depending only on $K$, and we claim that the three error terms on the right are bounded by $e^{-8bR}$. Indeed, $R\gg\log\log n$ implies $Ch e^{-cR}\le e^{-8bR}$, while $R^2\log R\asymp B^2\log n$ implies
\[
 C\sqrt n\,e^{-cR^2\log R}\le n^{-c'}
\]
for some $c'>0$ after choosing $B_0$ sufficiently large, and lastly, both $n^{-c'}$ and $n^{-c}$ are at most $e^{-8bR}$ for sufficiently large $n$.  Thus 
\begin{equation}\label{eq:general-fitted-sphere}
\sup_{\norm{z}=R}\norm{D_zT_nf_{n,\widehat G}} \le 2e^{-4bR}
\end{equation}
with probability at least $1-n^{-c}$ for small constant $c$.
On the other hand, (\ref{eq:centered-gradient-lower}) shows that with probability $1-Ce^{-cR}$, we have $\inf_{\norm{z}{} = R} \pnorm{D_z W_n}{} \geq e^{-2bR}$. On the intersection of these two events, \prettyref{eq:gradient-triangle} gives
\[
 \inf_{\norm{z}=R}\norm{D_zF}
 \ge e^{-2bR}-2e^{-4bR}
 \ge\frac12e^{-2bR}
\]
for sufficiently large $R$. By (\ref{eq:normalization_connection}), we then have
\begin{align}\label{eq:m_R_lower}
 m_R = \inf_{z\in \calS_R} \norm{\nabla F_{n,\widehat G}(z)} \ge \frac{1}{2}e^{R^2/2-(\gamma_K+2b)R}.
\end{align}

Next, we apply \prettyref{lem:growth} with the same $h,R$. For sufficiently large $B$, the remainder $\sqrt n\,e^{-cR^2\log R}$ in \prettyref{eq:Fn-growth} is at most $1$
by the same calculation above. Since
$\log h=O(\log\log n)=o(R^2)$, we obtain
\begin{equation} \label{eq:general-growth}
M_{4R} = \sup_{z\in\mathbb D_{4R}^d}\norm{\nabla F_{n,\widehat G}(z)} \le e^{C_KR^2}
\end{equation}
with failure probability at most $n^{-c}+C_Ke^{-2R^2}$.

Together, the estimates of $m_R$ and $M_{4R}$ give
$0\le\log(M_{4R}/m_R)\le C_KR^2$, hence \prettyref{prop:jensen-critical-count} with $\rho=4R$ bounds the number of interior critical points by $C_KR^{2d}$.  By \prettyref{prop:boundary-count} (with its condition (\ref{eq:boundary-conditions}) satisfied by \prettyref{eq:general-fitted-sphere} and \prettyref{eq:general-growth}), the number of boundary atoms is at most $C_KR^{2(d-1)}$, with additional failure probability bounded by $Ce^{-cR}$.
Therefore,
\[
 |\supp(\widehat G)|\le C_KR^{2d}\le C_KB^{2d}L_n^d,
\]
with failure probability at most $Ce^{-cR}+n^{-c}=Ce^{-cB\sqrt{L_n}}+n^{-c}$, which proves \prettyref{eq:tail-continuous-prior} and hence \prettyref{eq:OP-continuous-prior}.
\end{proof}

\subsection{Proof of Theorem~\ref{thm:main}}
\label{sec:proof-main}

Fix a finitely supported $G_\star\in\cP(K)$ and let $s=|\supp(G_\star)|$. By \cite[Theorem~2.1(1)]{ZhangVolgushev2026},
\[
n \chi^2(p_{\widehat G} \| p_\star) = n\int_{\R^d}\frac{(p_{\widehat G}-p_\star)^2}{p_\star}=O_{\Prob}(1).
\]
 Since $H^2(p,q)\le \chi^2(p\|q)$, it follows that
\begin{equation}
 \sqrt n\,H(p_{\widehat G},p_\star)
 =O_{\Prob}(1).
 \label{eq:finite-prior-H}
\end{equation}
Given $\eps>0$, choose a deterministic $h>1$ such that the failure probability in \prettyref{eq:fitted-hellinger-failure} satisfies
\[
 \limsup_{n\to\infty}a_n(h)<\eps/3.
\]
Next choose a fixed $R$ sufficiently large so that 
(a) $K\subset B_R$;
(b) all lower bounds on $R$ in \prettyref{prop:fitted-shell}, \prettyref{prop:centered-gradient-lower}, \prettyref{lem:growth}, \prettyref{prop:boundary-count} hold; (c)
\begin{equation}\label{eq:finite-prior-radius}
 R\ge C_{K,s},\qquad
 \log h\le R^2,\qquad
 Ch e^{-cR}\le e^{-8bR};
\end{equation}
and (d) the combined failure probability from \prettyref{prop:centered-gradient-lower}, \prettyref{lem:growth}, and \prettyref{prop:boundary-count}, namely, $n^{-c} + Ce^{-cR}$, is bounded by $\eps/3$ for large enough $n$.
With $h$ and $R$ independent of $n$, the sample-size conditions in \prettyref{prop:fitted-shell} and \prettyref{lem:growth}, and the radius condition $R^2\le C_R L_n$ at the beginning of \prettyref{sec:proofs} hold for all sufficiently large $n$. 
Moreover, $n^{-c}\le e^{-8bR}$ for large $n$.  Thus, 
in view of \prettyref{eq:finite-prior-radius},
applying  \prettyref{eq:fitted-shell-atomic} 
in \prettyref{prop:fitted-shell} for 
finitely supported $G_\star$, we obtain
\begin{equation}\label{eq:atomic-fitted-sphere}
\sup_{z\in\mathcal S_R}\norm{D_zT_nf_{n,\widehat G}}
\le Ch e^{-cR}+n^{-c}\le 2e^{-4bR}
\end{equation}
with probability at least $1-a_n(h)-n^{-c}$.
On the other hand, \prettyref{eq:centered-gradient-lower} shows that with probability at least $1-Ce^{-cR}$,
\[
\inf_{z\in\mathcal S_R}\norm{D_zW_n}\ge e^{-2bR}.
\]
On the intersection of these events, \prettyref{eq:gradient-triangle} gives
\[
\inf_{z\in\mathcal S_R}\norm{D_zF_{n,\widehat G}}
\ge e^{-2bR}-2e^{-4bR}
\ge\frac12e^{-2bR}
\]
for sufficiently large $R$.  By \prettyref{eq:normalization_connection},
exactly as in the proof of \prettyref{thm:continuous-prior}, we obtain
\begin{equation}\label{eq:atomic-mR-lower}
m_R=\inf_{z\in\mathcal S_R}\norm{\nabla F_{n,\widehat G}(z)}
\ge\frac12e^{R^2/2-(\gamma_K+2b)R}.
\end{equation}

Next, we apply \prettyref{lem:growth} with the same $h,R$.  By the
finite-atomic bound \prettyref{eq:Fn-growth-atomic} and $\log h\le R^2$,
\begin{equation}\label{eq:atomic-growth}
M_{4R}=\sup_{z\in\mathbb D_{4R}^d}
\norm{\nabla F_{n,\widehat G}(z)}
\le h e^{CR^2}\le e^{C_KR^2}
\end{equation}
on the Hellinger event $\{\sqrt n\,H(p_{\widehat G},p_\star)\leq h\}$, except for an additional event 
of probability at most $n^{-c}+Ce^{-2R^2}$.

Combining \prettyref{eq:atomic-mR-lower} and \prettyref{eq:atomic-growth}, we obtain 
$0\le\log(M_{4R}/m_R)\le C_KR^2$.  Hence
\prettyref{prop:jensen-critical-count} with $\rho=4R$ bounds the number of interior critical points by $C_KR^{2d}$. By \prettyref{prop:boundary-count} (with its condition (\ref{eq:boundary-conditions}) satisfied by \prettyref{eq:atomic-fitted-sphere} and
\prettyref{eq:atomic-growth}), there are at most $C_KR^{2(d-1)}$ boundary atoms with failure probability bounded by $Ce^{-cR}$. Consequently,
\[
|\supp(\widehat G)|\le C_KR^{2d},
\]
with probability at least $1-a_n(h)-Ce^{-cR}-n^{-c}$. By our choices of $h$ and $R$ (both depending on $\eps$ but not on $n$), we have 
\[
\limsup_{n\to\infty}
\Prob\big(|\supp(\widehat G)|> C_KR^{2d}\big) \leq \eps.
\]
 Since $\eps>0$ was arbitrary, this proves
\prettyref{eq:OP-finite-prior}, namely, 
$|\supp(\widehat G)|=O_\Prob(1)$.

\appendix

\section{Boundary case}
\label{app:boundary}

We count the critical points of the KKT function (\ref{eq:KKT-function}) on the boundary $\partial K$, or equivalently, on the relative interior of each face. Since $K=[-M,M]^d$ is invariant under signed coordinate permutations, it suffices for the probabilistic argument to consider the representative $k$-dimensional face
\begin{align}\label{eq:representative_face}
 H_k=[-M,M]^k\times\{c_k\},\qquad
 c_k=M\mathbf 1_{d-k},\qquad 1\le k<d.
\end{align}

\begin{proposition}
\label{prop:boundary-count}
Fix $b>0$, $C_g>0$, and $C_R<\infty$.  There are $R_0,C,c_0>0$,
depending only on $K,b,C_g$, and $n_0=n_0(K,b,C_g,C_R)$ with the
following property.  For every $n\ge n_0$, every $R\ge R_0$ with
$R^2\le C_RL_n$, and every $G_\star\in\cP(K)$, there is an event
$\mathcal E_{\partial,n,R}$ with probability at least $1-Ce^{-c_0R}$, such that on this event,
if the NPMLE $\widehat G$ satisfies
\begin{equation} \label{eq:boundary-conditions}
\sup_{\norm{z}=R}\norm{D_zT_nf_{n,\widehat G}}\le2e^{-4bR}, \qquad \sup_{z \in \mathbb D_{4R}^d}\norm{\nabla F_{n,\widehat G}}\le e^{C_gR^2},
\end{equation}
then
\begin{equation}\label{eq:boundary-count}
|\supp(\widehat G)\cap\partial K|\le CR^{2(d-1)}.
\end{equation}
\end{proposition}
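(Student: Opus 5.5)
The plan is to reduce the boundary count to the interior argument on each face, one face at a time, and then sum over the $O_d(1)$ faces. By \prettyref{lem:KKT}, every atom of $\widehat G$ in the relative interior of a $k$-face with free coordinates $S$ ($|S|=k$) is a critical point of the restriction $\theta_S\mapsto F_{n,\widehat G}(\theta_S,c)$, where $c\in\{\pm M\}^{d-k}$ fixes the frozen coordinates. Vertices ($k=0$) contribute at most $2^d$ atoms. By signed-permutation symmetry it suffices to treat the representative face $H_k$ in \prettyref{eq:representative_face}. For it, consider the holomorphic function $\Phi(w):=F_{n,\widehat G}(w,c_k)$ on $\C^k$, whose gradient is $\nabla_{S}F_{n,\widehat G}(w,c_k)$. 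We apply \prettyref{prop:jensen-critical-count} in dimension $k$ with radius $R$ and $\rho=4R$. The upper bound $M_{4R}\le e^{C_gR^2}$ follows directly from the second condition in \prettyref{eq:boundary-conditions}, since $\mathbb D^k_{4R}\times\{c_k\}\subset\mathbb D^d_{4R}$ once $R\ge M$. It remains to show $m_R\ge e^{cR^2}$ on the $k$-dimensional complex sphere $\{(w,c_k):w\in\C^k,\ \norm{w}=R\}$. Granting this, we get $C R^{2k}\le CR^{2(d-1)}$ critical points per face, and summing over faces gives \prettyref{eq:boundary-count}.

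For the lower bound on $m_R$ we use the same decomposition \prettyref{eq:decomposition} with the normalization $w_z$ at $z=(w,c_k)$. Here $\norm{z}^2=R^2+(d-k)M^2$, so the normalization is comparable to that of \prettyref{eq:normalization_connection} up to $e^{O(R)}$ factors.

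\emph{Estimation error term.} The first condition in \prettyref{eq:boundary-conditions} controls $\norm{D_zT_nf_{n,\widehat G}}$ on $\mathcal S_R$, not at the points $z=(w,c_k)$, which have norm $R'=\sqrt{R^2+(d-k)M^2}$. I would handle this by applying the condition at radius $R'$; equivalently, I would note that the proofs of \prettyref{lem:population-shell} and \prettyref{prop:empirical-shell} hold on any sphere with radius in $[R,2R]$. If one insists on using only \prettyref{eq:boundary-conditions} literally, the alternative is to work instead on the slice $\{(w,c_k)\}\cap\mathcal S_R$, i.e.\ $\norm{w}^2=R^2-(d-k)M^2$. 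This is still a complex sphere of radius $\asymp R$ in $\C^k$, and Jensen applies to it. I would take this second route. The $S$-coordinates of the full normalized gradient then obey $\norm{(D_zT_nf)_S}\le 2e^{-4bR}$.

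\emph{Fluctuation term.} The main obstacle is a face-restricted version of \prettyref{prop:centered-gradient-lower}: with probability $1-Ce^{-c_0R}$,
\[
\inf_{z}\norm{(D_zW_n)_S}\ge e^{-2bR},
\]
with the infimum over the $(2k-1)$-dimensional slice. The event $\mathcal E_{\partial,n,R}$ will be the intersection of these events over all faces. I would rerun the argument of \prettyref{sec:centered-gradient} with $S$-coordinates only.

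The covariance lower bound of \prettyref{lem:point-covariance} projects onto the $2k$ real coordinates of $(\K_z)_S$, since a principal submatrix of a matrix dominating $c(1+R)^{-2}\diag(I_d,\tau_z^2I_d)$ dominates the corresponding block. \prettyref{lmm:esseen} in dimension $2k$ then gives the pointwise bound $C R^{2k}\eps^{2k}(\eps^2+\tau_z^2)^{-k/2}$. The Lipschitz bound of \prettyref{lem:sphere-variation} carries over verbatim. The union bound over a $\delta$-net of the $(2k-1)$-dimensional slice, with the tube-volume peeling estimate adapted to the slice, gives a bound $CR^{O(d)}\eps^{1/(k+1)}$ as before. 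Here the exponent $2k$ again beats the dimension $2k-1$.

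The step I expect to be delicate is this peeling: the set where $\norm{\Ima z}$ is small must be computed on the slice, where the imaginary part of $z$ is $(\Ima w,0)$. The count $M(t)\lesssim R^{k-1}\delta^{-(2k-1)}(t+\delta)^k$ should go through by the same surface-area computation in $\C^k$. Taking $\eps=e^{-2bR}$ and noting that $\Delta_{n,R}\le e^{-2bR}$ when $R^2\le C_RL_n$, the triangle inequality gives
\[
\norm{(D_zF)_S}\ge e^{-2bR}-2e^{-4bR}\ge\tfrac12 e^{-2bR}.
\]
The conversion \prettyref{eq:normalization_connection} then gives $m_R\ge e^{R^2/2-CR}$, hence $\log(M_{4R}/m_R)\le CR^2$, and \prettyref{prop:jensen-critical-count} finishes the proof.
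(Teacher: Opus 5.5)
Your proposal is correct and follows the paper's proof essentially step for step. Like the paper, you work on the slice $\norm{u}=\rho_k=\sqrt{R^2-(d-k)M^2}$, so that $\norm{(u,c_k)}=R$ and the first condition in \eqref{eq:boundary-conditions} applies literally. You prove the same face-restricted small-ball bound as Lemma~\ref{lem:edge-derivative}, using the principal-submatrix covariance bound, Esseen in dimension $2k$, the Lipschitz bound $\Lambda_k\le\Lambda_R$ and the same peeling, and then apply the $k$-dimensional Jensen bound and sum over faces. The one detail you leave implicit is that the complex ball of radius $\rho_k$ in $\C^k$ must contain the real face $[-M,M]^k$; this holds because $kM^2<\rho_k^2$ is equivalent to $dM^2<R^2$, which is true for $R\ge R_0$.
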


We need the following variant of Proposition \ref{prop:centered-gradient-lower} on the face \prettyref{eq:representative_face}. In the application below, we pick $\eps = e^{-2bR}$ as in (\ref{eq:centered-gradient-lower}).

\begin{lemma}\label{lem:edge-derivative}
There are constants $R_0,C>0$, depending only on $K$, such that the following holds. For $1\le k\le d$, let
\[
\rho_k=\sqrt{R^2-(d-k)M^2}.
\]
For every $R\ge R_0$ and $0<\eps<1/4$ satisfying
$\eps\ge\Delta_{n,R}$,
we have, uniformly over $G_\star\in\cP(K)$,
\begin{equation}\label{eq:edge-derivative}
\Prob\left(\inf_{\norm{u}=\rho_k}w_{(u,c_k)}\norm{\nabla_uW_n(u,c_k)} \le\eps \right) \le CR^{4d+8}\eps^{1/(k+1)},
\end{equation}
where $w_{(u,c_k)}$ is the weight $w_z$ defined in \prettyref{eq:wz} applied to $z = (u,c_k)$.
\end{lemma}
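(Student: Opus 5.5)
The plan is to repeat the proof of \prettyref{prop:centered-gradient-lower} with the $d$-dimensional complex sphere $\calS_R$ replaced by the $k$-dimensional complex sphere $\{u\in\C^k:\norm{u}=\rho_k\}$. The last $d-k$ coordinates are frozen at the real vector $c_k$. Write $z=(u,c_k)$, so that $\norm{z}^2=\norm{u}^2+(d-k)M^2$.

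The random field we work with is
\[
Z_k(u):=w_{(u,c_k)}\nabla_uW_n(u,c_k)=\frac1{\sqrt n}\sum_{i=1}^n\K^{(k)}_{(u,c_k)}(X_i)\in\C^k,
\]
where $\K^{(k)}_z$ denotes the first $k$ coordinates of $\K_z$ from \prettyref{eq:Kz}. These are iid centered summands. The event in \prettyref{eq:edge-derivative} has the form $\inf\norm{Z_k}\le\eps$, so the covering argument of \prettyref{lem:covering_lemma} applies verbatim. The only change is that the net $\N_\delta$ now lives on a $(2k-1)$-dimensional sphere of radius $\rho_k\le R$, so $|\N_\delta|\le C(R/\delta)^{2k-1}$.

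\emph{Small-ball step.} I would prove a $k$-dimensional version of \prettyref{lem:point-small-ball}:
\[
\Prob(\norm{Z_k(u)}\le\eps)\le C(1+R)^{2k}\eps^{2k}(\eps^2+\tau_z^2)^{-k/2},
\]
valid for $\eps\ge\Delta_{n,R}$. This comes from \prettyref{lmm:esseen} in dimension $2k$, and it needs two inputs.
\begin{enumerate}
\item The third-moment bound \prettyref{eq:third-moment}. It transfers directly, because $\norm{\K^{(k)}_z}\le\norm{\K_z}$ and $\norm{z}\le R$.
\item A covariance lower bound for the real $2k$-vector built from $\K^{(k)}_z$. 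Any linear functional of $(\Rea\K^{(k)}_z,\Ima\K^{(k)}_z)$ is a linear functional of the full vector $(\Rea\K_z,\Ima\K_z)$ with coefficients $(u',0)$ and $(v',0)$. Since $P_z$ acts blockwise and preserves that zero pattern, \prettyref{lem:point-covariance} gives the projected lower bound $c(1+R)^{-2}\diag(I_k,\tau_z^2I_k)$ after the same phase rotation. (Here $u',v'\in\R^k$ are coefficient vectors, distinct from the sphere variable $u$.)
\end{enumerate}
Since $\Ima z=(\Ima u,0)$, we have $\tau_z=1\wedge\norm{\Ima u}$.

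\emph{Lipschitz step.} I would bound the Lipschitz constant of $u\mapsto Z_k(u)$ by the proof of \prettyref{lem:sphere-variation}. The Sobolev embedding is applied on the ball of radius $\rho_k$ in $\R^{2k}$, with derivatives taken only in the $u$-variables. The derivative moment bound \prettyref{eq:gradient-derivative-second-moment} holds for every $z$ with $\norm{z}\le R$, so the result is $\Expect\Lambda^2\le C_KR^{4k+8}\le C_KR^{4d+8}$.

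\emph{Peeling and conclusion.} This repeats the argument of \prettyref{sec:pf-centered-gradient-lower} with $d$ replaced by $k$. The tube-volume estimate becomes $M(t)\le CR^{k-1}\delta^{-(2k-1)}(t+\delta)^k$, computed on the $k$-dimensional complex sphere. Choosing $L=\eps^{-1/(2k+2)}$ then yields the bound $CR^{4k+8}\eps^{1/(k+1)}$, which is at most $CR^{4d+8}\eps^{1/(k+1)}$.

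The main point to check is the covariance projection: one must verify that restricting to the first $k$ coordinates commutes with the rotation $P_z$, since $\phi=\Ima z\cdot\Rea z=\Ima u\cdot\Rea u$ is a scalar phase and $P_z$ acts blockwise. It must also be checked that the factor $(1+\norm{a_\perp}^2)^{-1}$ in that proof is still controlled by $(1+R)^{-2}$ when $a$ includes the frozen coordinates $c_k$; this holds because $\norm{a}\le R$. The constants stay uniform in $G_\star$ because every ingredient is.
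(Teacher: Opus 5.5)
Your proposal is correct and follows the paper's proof: the principal-submatrix restriction of the covariance bound in \prettyref{lem:point-covariance} together with the third-moment bound feeds \prettyref{lmm:esseen} in dimension $2k$, and then the same discretization and peeling argument is run on the $(2k-1)$-dimensional sphere with $L=\eps^{-1/(2k+2)}$. The only difference is the Lipschitz step. The paper does not redo the Sobolev argument in $\R^{2k}$: since $u\mapsto(u,c_k)$ embeds $\{\norm{u}=\rho_k\}$ isometrically into $\mathcal S_R$ and $P_k$ is a contraction, $\Lambda_k\le\Lambda_R$, and \prettyref{lem:sphere-variation} applies directly. Your version also works and gives the slightly sharper factor $R^{4k+8}$.
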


\begin{proof}
Abbreviate $c\equiv c_k$, $\rho\equiv\rho_k$, and $Z(z)\equiv D_zW_n$. 
Applying $Z(z)$ to a point $z=(u,c)$ on the face $H_k$, define
\[ Z_k(u) := P_kZ(u,c)=w_{(u,c)}\nabla_uW_n(u,c),
\]
where $P_k:\C^d\to\C^k$ projects onto the first $k$ free coordinates.
For $\norm{u}=\rho$ we have $\norm{(u,c)}=R$, and $\rho\asymp R$. We follow the same pointwise small-ball estimate plus covering argument of Proposition \ref{prop:centered-gradient-lower}.

\emph{Pointwise small-ball estimate.}
Let $z=(u,c)$, $\phi=\Rea z\cdot\Ima z$, and
$\tau_u=1\wedge\norm{\Ima u}$. 
Taking the principal submatrix corresponding to the first $k$ real and imaginary coordinates of (\ref{eq:gradient-covariance-bounds}), we obtain
\[
 \Cov_\star\!\begin{pmatrix}
   \Rea(e^{-i\phi}P_k\K_z)\\
   \Ima(e^{-i\phi}P_k\K_z)
 \end{pmatrix}
 \succeq c(1+R)^{-2}\diag(I_k,\tau_u^2I_k),
\]
which also holds if $u$ is real. Moreover, $\Expect_\star\norm{P_k\K_z}^3\le e^{CR^2}$
by \prettyref{eq:third-moment}.
Applying \prettyref{lmm:esseen} in dimensions $2k$ therefore gives
\begin{equation}\label{eq:edge-small-ball}
\Prob_{G_\star}\big(\norm{Z_k(u)}\le\eps\big)
\le CR^{2k}\frac{\eps^{2k}}{(\eps^2+\tau_u^2)^{k/2}}, \qquad \Delta_{n,R}\le\eps<\frac{1}{2}.
\end{equation}

\emph{Lipschitz estimate.}
The embedding $u\mapsto(u,c)$ is an isometry from
$\{\norm{u}=\rho\}$ into $\mathcal S_R$.  Since $P_k$ is a
contraction, the Lipschitz constant $\Lambda_k$ of $Z_k$ on this sphere satisfies $\Lambda_k\le\Lambda_R$. Consequently,
\begin{equation}
 \Expect\Lambda_k^2\le CR^{4d+8}
 \label{eq:edge-lipschitz}
\end{equation}
by \prettyref{lem:sphere-variation}.

\emph{Uniform lower bound.}
We apply the same discretization and peeling argument in
\prettyref{sec:pf-centered-gradient-lower}, using \prettyref{eq:edge-small-ball}
in complex dimension $k$ and \prettyref{eq:edge-lipschitz} for the
Lipschitz term.  The sphere has real dimension $2k-1$ and
radius $\rho\asymp R$.  With mesh $\delta=\eps/L$, the analogue of (\ref{eq:direct-sphere-bound}) yields, for $L\ge1$ and
$\Delta_{n,R}\le\eps<1/8$,
\begin{align*}
 \Prob_{G_\star}\left\{
   \inf_{\norm{u}=\rho}\norm{Z_k(u)}\le\eps\right\}
 \le C\Big\{R^{4d+8}L^{-2}
  +L^{2k-1}\eps
    \big[R^{3k-1}\log(e/\eps)+R^{4k-1}\big]\Big\}.
\end{align*}
Taking $L=\eps^{-1/(2k+2)}$ bounds the last display by $CR^{4d+8}\eps^{1/(k+1)}$, as desired.
\end{proof}

\begin{proof}[Proof of \prettyref{prop:boundary-count}]
If $M=0$ or $d=1$, the boundary contains at most two points, so there is nothing to prove.  Suppose $M>0$ and $d\ge2$. For every proper face $H$ of dimension $1\le k<d$, after permuting the coordinates and reflecting those that equal to $-M$, it can be mapped to the representative face $H_k$
in (\ref{eq:representative_face}).
Since permutation and reflection are orthogonal transforms, \prettyref{lem:edge-derivative} with the choice $\eps = e^{-2bR}$ applies to the event
\[
 \inf_{\norm{u}=\rho_k}w_{(u,c_H)}
             \norm{\nabla_uW_n(u,c_H)}>e^{-2bR}
\]
where 
$\rho_k=\sqrt{R^2-(d-k)M^2}$.
There are $2^{d-k}\binom dk$ faces of dimension $k$ of the cube. Taking the intersection and applying the union bound, we obtain an event $\mathcal E_{\partial,n,R}$ with $\Prob_{G_\star}(\mathcal E_{\partial,n,R}^c)\le Ce^{-c_0R}$.

On $\mathcal E_{\partial,n,R}$, suppose that $\widehat G$ satisfies \prettyref{eq:boundary-conditions}, and let $F_H(u) := F_{n,\widehat G}(u,c_H)$. Since
$\norm{(u,c_H)}=R$ whenever $\norm{u} = \rho_k$,
the decomposition \prettyref{eq:decomposition} yields
\[
 \inf_{\norm{u}=\rho_k}w_{(u,c_H)}\norm{\nabla F_H(u)}
 \ge e^{-2bR}-2e^{-4bR}\ge\frac12e^{-2bR}.
\]
On this sphere $w_{(u,c_H)}\le e^{\gamma_KR-R^2/2}$ by (\ref{eq:r-bound}), and $\mathbb D_{4\rho_k}^k\times\{c_H\}\subset\mathbb D_{4R}^d$. The quantities $M_{4\rho_k}$ and $m_{\rho_k}$
of \prettyref{prop:jensen-critical-count} in dimension $k$
therefore satisfy
\begin{align}
 \inf_{\norm{u}=\rho_k}\norm{\nabla F_H(u)}&\ge e^{-CR^2}, 
 \label{eq:edge-lower} \\
 \sup_{u\in\mathbb D_{4\rho_k}^k}\norm{\nabla F_H(u)}
 &\le e^{C_gR^2}.
 \label{eq:edge-upper}
\end{align}
Consequently,
\[
 \#\{u\in\C^k:\norm{u}<\rho_k,\ \nabla F_H(u)=0\}
 \le C\left[1+\frac{(C_g+C)R^2}{\log4}\right]^k
 \le CR^{2k}.
\]
This bounds the number of critical points of $F_H$ over $[-M,M]^k$, since  $kM^2+(d-k)M^2=dM^2=\gamma_K^2<R^2$.  By
\prettyref{lem:KKT}, each atom in the relative interior of $H$ is a critical point of $F_H$.  Hence
$|\supp(\widehat G)\cap\operatorname{relint}H|\le CR^{2k}$.

Lastly, note that the relative interiors of the proper faces partition $\partial K$, and vertices contribute at most one point each. Therefore
\[
 |\supp(\widehat G)\cap\partial K|
 \le 2^d+\sum_{k=1}^{d-1}2^{d-k}\binom dk\,CR^{2k}
 \le CR^{2(d-1)}.
\]
This proves \prettyref{eq:boundary-count}.
\end{proof}

\section{Proof of auxiliary results}
\label{app:aux}

\subsection{Proof of \prettyref{prop:jensen-critical-count}}
\label{app:critical-count}

We recall the setup of \prettyref{prop:jensen-critical-count} so that this subsection is self-contained.  For $R>0$ and $\rho\ge2R$, let $B_R=\{z\in\C^d:\norm{z}\leq R\}$ and $\mathbb D_\rho^d=\{z\in\C^d:|z_j|\le\rho,\ 1\le j\le d\}$.  Suppose that $F$ is holomorphic on a neighborhood of $\mathbb D_\rho^d$ and set
\[
 M_\rho=\sup_{z \in \mathbb D_\rho^d}\norm{\nabla F(z)},
 \qquad m_R=\inf_{\norm{z}=R}\norm{\nabla F(z)}>0.
\]
We will prove the claim in (\ref{eq:jensen-critical-count}): counting multiplicity,
\begin{equation*}
\#\{z\in B_R:\nabla F(z)=0\}
\le C_d\left[1+\frac{\log(M_\rho/m_R)}{\log(\rho/R)}\right]^d,
\end{equation*}
for some $C_d > 0$ that only depends on $d$. The inclusion $\mathcal S_R\subset\mathbb D_\rho^d$ ensures $M_\rho\ge m_R$, so the logarithm is nonnegative.

We make two comments before presenting its proof:
\begin{enumerate}
\item The bound \prettyref{eq:jensen-critical-count} is essentially a special case of \cite[Theorem 1.7]{BPPSS2024}. More precisely, thanks to condition $m_R > 0$, for small enough $\delta$ (e.g. $\delta = m_R/4$), all connected components of the sub-level set $E_\delta(\nabla F, R) = \{z\in B_R: \pnorm{\nabla F}{}\leq \delta\}$ are \emph{islands} in the language of \cite[Section 1.4]{BPPSS2024}, and hence with $\tau(f,r,\delta)$ defined after Theorem 1.6 therein, their Theorem 1.7 implies that
\begin{align*}
\#\{z\in B_R:\nabla F(z)=0\} \leq \tau(\nabla F, R, m_R/4) \leq C(d,\rho/R) \Big[\log\frac{4M_\rho}{m_R}\Big]^d.
\end{align*}
Our claimed bound (\ref{eq:jensen-critical-count}) makes the dependence on $\rho/R$ explicit.
\item It is instructive to explain the difference between (\ref{eq:jensen-critical-count}) and its one-dimensional analogue. For example, \cite[Lemma 4]{PolyanskiyWu2020}, when applied with $R,2R,4R$, bounds the number of critical points of one-dimensional $F$ in a disk of radius $R$ as follows:
\begin{align}
\#\{z\in B_R:F'(z)=0\} \leq \frac{\log(M_{4R}/M_{2R})}{\log(3/2)}.
\label{eq:1DJensen}
\end{align}
In contrast to (\ref{eq:jensen-critical-count}), the above one-dimensional formula only requires the maximum modulus quantities $M_{2R}$ and $M_{4R}$. 
However, it is known that in multiple dimensions, controlling the maximum modulus alone is not enough for bounding the number of zeros or critical points \cite{BPPSS2024}.
Indeed, consider the following example for $d=2$ and $R=1$. For any positive integer $N$, let
\begin{align*}
q_N(w) = \prod_{j=0}^{N-1} \Big(w - \frac{j}{N}\Big),
\end{align*}
and
\begin{align*}
F_N(z_1,z_2) = \frac{z_1^2}{2} + 5^{-N}\int_0^{z_2} q_N(w)\d w,
\end{align*}
Then
\begin{align*}
\nabla F_N(z_1,z_2) = \Big(z_1, 5^{-N}q_N(z_2)\Big),
\end{align*}
and $F_N$ has exactly $N$ simple critical points $(0, j/N)$ for $j=0,\ldots,N-1$, all of which belong to $B_1$. On the other hand, using $|q_N(w)| \leq 5^N$ for $|w|\leq 4$, we have
\begin{align*}
M_{4} = \sup_{z \in \mathbb D_4^2}\pnorm{\nabla F_N(z)}{} \leq \sqrt{17}, \quad M_2 =  \sup_{z \in \mathbb D_2^2}\pnorm{\nabla F_N(z)}{} \geq \pnorm{\nabla F_N((2,0))}{} = 2.
\end{align*}
Consequently, $M_4/M_2 \leq 3$ for all $N$, but there are $N$ critical points inside $B_1$. This counterexample is essentially due to the fact that the first coordinate of $\nabla F_N(z)$ is dominant, while the second controls the number of critical points, which is not captured by maximal modulus $M_R$ that only controls the norm of the whole gradient vector; in contrast, $\log(1/m_1) \asymp N$, so (\ref{eq:jensen-critical-count}) gives a non-trivial upper bound $O(N^2)$. 
\end{enumerate}

The proof of \prettyref{prop:jensen-critical-count} follows the same program as \cite[Theorem 1.7]{BPPSS2024}: we approximate the components of the gradient by polynomials, use the boundary minimum to apply Rouch\'e's theorem, and then apply B\'ezout's theorem to the polynomial approximant. The last two steps are recorded in the following lemma.

\begin{lemma}
\label{lem:rouche-bezout}
Let $\Omega\subset\C^d$ be a bounded open set with smooth compact boundary, allowing
finitely many components.  Let $f,g:\C^d\to\C^d$ be holomorphic near
$\overline\Omega$, and suppose
\begin{equation}
 \norm{f-g}<\norm{g}\qquad\text{on }\partial\Omega.
 \label{eq:rouche-gap-geometry}
\end{equation}
Then $f$ and $g$ have the same finite number of zeros (counting multiplicity) in $\Omega$.
Furthermore, if $g=(g_1,\ldots,g_d)$ is a polynomial and each component has degree at most $N$, then this number is at most $N^d$.
\end{lemma}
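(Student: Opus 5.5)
The plan is to deduce both parts from the multivariate argument principle, via homotopy invariance of the local degree of a holomorphic map.

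\emph{Rouch\'e part.} Consider the homotopy $h_t=g+t(f-g)$ for $t\in[0,1]$. By \eqref{eq:rouche-gap-geometry}, $\norm{h_t}\ge\norm{g}-t\norm{f-g}>0$ on $\partial\Omega$, so no $h_t$ vanishes on the compact set $\partial\Omega$. Hence the zeros of each $h_t$ in $\overline\Omega$ lie in $\Omega$ at positive distance from $\partial\Omega$.

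First we show the zero sets are finite. The zero set of a holomorphic map $h:\C^d\to\C^d$ is an analytic subvariety. If it had a positive-dimensional component meeting $\Omega$, that component could not be compact in $\C^d$. Its intersection with $\overline\Omega$ would then have to reach $\partial\Omega$, since a positive-dimensional analytic set has no isolated compact piece inside a bounded domain (maximum principle applied to coordinate functions). This contradicts $h\ne0$ on $\partial\Omega$. So every zero in $\Omega$ is isolated, and by compactness there are finitely many.

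Next, each isolated zero has a local multiplicity equal to the local Brouwer degree of $h$. This degree is positive because holomorphic maps preserve orientation: the real Jacobian determinant equals $|\det Dh|^2\ge0$. Generic perturbations make all zeros simple with degree $+1$, and the local degree equals the dimension of the local algebra, so the degree count agrees with the count with multiplicity. The total count of zeros is therefore the Brouwer degree $\deg(h_t,\Omega,0)$. Equivalently, it is the Bochner--Martinelli integral of $h_t/\norm{h_t}$ over $\partial\Omega$, which is continuous in $t$ and integer valued. Homotopy invariance of degree (no zeros cross $\partial\Omega$) gives $\deg(f,\Omega,0)=\deg(g,\Omega,0)$. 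Allowing finitely many components of $\Omega$ changes nothing, since the degree is additive over components.

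\emph{B\'ezout part.} Let $g$ be a polynomial map with each component of degree at most $N$. Its zeros in $\Omega$ are finitely many and isolated, by the argument above applied to $g$ itself. To each isolated zero, the intersection multiplicity in the sense of algebraic geometry coincides with the analytic multiplicity, namely the local degree, or $\dim\mathcal O_p/(g_1,\dots,g_d)$.

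Homogenize: let $G_j$ be the degree-$\deg g_j$ homogenization in $\mathbb P^d$. The isolated points of $\{G_1=\cdots=G_d=0\}$ in the affine chart include our zeros. The refined B\'ezout theorem (e.g.\ Fulton, \emph{Intersection Theory}, Example 8.4.6) states that the sum of intersection multiplicities over the isolated points of a set cut out by $d$ hypersurfaces in $\mathbb P^d$ is at most $\prod_j\deg g_j\le N^d$. This holds even if the intersection also has positive-dimensional components at infinity. Hence the zero count is at most $N^d$.

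An alternative route, if one prefers to avoid the refined B\'ezout theorem, is a perturbation. Replace $g$ by $g^\eta=g+\eta(z_1^{N+1}-1,\dots,z_d^{N+1}-1)$, or use a generic small perturbation within degree $\le N$ polynomials. For small $\eta$, the Rouch\'e part shows the zero count in $\Omega$ is unchanged. For a generic perturbation the system has no solutions at infinity and all zeros are simple, so the classical B\'ezout count gives at most $N^d$ zeros in total. (The top-degree parts $g_j^{(N)}+\ldots$ must have no common projective zero. If we perturb by $\eta z_j^N$, the leading forms are $\eta z_j^N$ plus the original degree-$N$ parts; for all but finitely many $\eta$ the leading forms have only the trivial common zero, by generic choice.)

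The main obstacle is the rigorous identification of multiplicities and the finiteness claim. I would lean on standard references, such as the degree theory in Lloyd's \emph{Degree Theory} and Chirka's \emph{Complex Analytic Sets} for local multiplicity, rather than reproving them. The perturbation route to B\'ezout is the cleanest to write, because it needs only the generic statement that a system with no zeros at infinity has exactly $\prod\deg$ zeros counted with multiplicity.
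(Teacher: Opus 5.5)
Your proposal is correct and takes essentially the paper's route. The paper simply cites the multivariate Rouch\'e theorem, with finiteness coming from the fact that a compact analytic subset of $\Omega$ is finite, together with B\'ezout's theorem; your degree-theoretic argument is a standard proof of the cited Rouch\'e theorem, and your B\'ezout step is more careful than the paper's one-line appeal, since the zero set of $g$ may be positive-dimensional outside $\Omega$ or at infinity. One fix: use the perturbation $g+\eta(z_1^N,\dots,z_d^N)$ (with generic small $\eta$), not the degree-$(N+1)$ one you wrote first, which would only give the weaker bound $(N+1)^d$.
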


\begin{proof}
The statement is the standard multivariate Rouch\'e's theorem (cf.~e.g.~\cite[Chapter~IV, \S18, Theorem~2]{Shabat}).
The finiteness of the zero set is shown in the proof therein and follows from the non-vanishing of both $f$ and $g$ on the boundary
$\partial\Omega$ and applying \cite[Theorem~7, \S8.24]{Shabat}.

If $g=(g_1,\ldots,g_d)$ is a polynomial, its zeros in $\Omega$ are isolated by the
finiteness established above.  By B\'ezout's theorem, their total multiplicity is at most $N^d$.
\end{proof}

\begin{proof}[Proof of \prettyref{prop:jensen-critical-count}]
Let $\lambda=\rho/R\ge2$, $M=M_\rho$, $m=m_R$, and define the holomorphic function
\[
 g(w)=\nabla F(Rw),\qquad w\in\mathbb D_\lambda^d.
\]
Then $\sup_{\mathbb D_\lambda^d}\norm{g}=M$ and $\inf_{\norm{w}=1}\norm{g(w)}=m$.  Expand $g(w)=\sum_\alpha c_\alpha w^\alpha$, with vector coefficients $c_\alpha\in\C^d$ and multi-indices $\alpha\in\Nzero^d$.
Applying Cauchy's integral formula iteratively (cf.~e.g.~\cite[Chap.~1, \S5, Theorem 1]{Shabat}) gives
\[
 c_\alpha=\frac{1}{(2\pi i)^d}
 \oint_{|\zeta_1|=\lambda}\cdots\oint_{|\zeta_d|=\lambda}
 \frac{g(\zeta_1,\ldots,\zeta_d)}
      {\prod_{j=1}^d\zeta_j^{\alpha_j+1}}
 \d \zeta_d\cdots \d \zeta_1.
\]
Taking the Euclidean norm and using $\norm{g(\zeta)}\le M$ on these contours, we get
\[
 \norm{c_\alpha}\le M\lambda^{-|\alpha|}.
\]
Let $p_N(w)=\sum_{|\alpha|\le N}c_\alpha w^\alpha$.  There are $\binom{\ell+d-1}{d-1}$ multi-indices of total degree $\ell$, and $|w_j|\le1$ on $B_1$, so
\begin{align}\label{eq:gradient-Taylor-remainder}
 \notag\sup_{B_1}\norm{g-p_N}
 &\le M\sum_{\ell>N}\binom{\ell+d-1}{d-1}\lambda^{-\ell} = M\lambda^{-(N+1)}\sum_{j=0}^\infty \binom{N+j+d}{d-1}\lambda^{-j}\\
 &\overset{(i)}{\le} \frac{M(N+d)^{d-1}\lambda^{-(N+1)}}{(d-1)!} \sum_{j=0}^\infty (1+j)^{d-1} \lambda^{-j} \overset{(ii)}\leq  C_dM(N+d)^{d-1}\lambda^{-(N+1)},
\end{align}
where $(i)$ applies the estimate $\binom{N+j+d}{d-1} \leq \frac{(N+j+d)^{d-1}}{(d-1)!}$, and $(ii)$ applies the condition that $\lambda \geq 2$.

Choose
\begin{equation} \label{eq:gradient-Taylor-degree}
 N=\left\lceil N_0+
     \frac{8\log(M/m)}{\log\lambda}\right\rceil,
\end{equation}
where $N_0\ge2$ is sufficiently large.  Uniformly in $N\ge N_0$ and $\lambda\ge2$, the factor $C(N+d)^{d-1}$ is then at most $\lambda^{N/2}$.  Thus the right side of \prettyref{eq:gradient-Taylor-remainder} is at most $M\lambda^{-N/2-1}\le m/3$, as long as $N_0$ is sufficiently large.  In particular, on $\partial B_1$,
\[
 \norm{p_N}\ge\frac23m,
 \qquad
 \norm{g-p_N}\le\frac13m<\norm{p_N}.
\]
Apply \prettyref{lem:rouche-bezout} to $g$ and $p_N$ on the open ball $\Omega=\operatorname{int}B_1$.  Each polynomial component has degree at most $N$, so $g$ has at most $N^d$ zeros in the interior of $B_1$, counting multiplicity.  Since $\norm{g}\ge m>0$ on $\partial B_1$, the same bound holds in $B_1$, which is precisely the critical points of $F$ in $B_R$ after the change of variables $z=Rw$. 
\end{proof}

\subsection{Proof of Lemma~\ref{lem:polarization}}
\label{app:mixed-moments}

We first prove \prettyref{eq:low-coeff} by Cauchy--Schwarz, then use the resulting control of the first $2s$ moments in the moment-comparison inequality in \cite[Lemma~6.3]{ZhangVolgushev2026} to prove \prettyref{eq:mixed-high}.

Write $H=H(p_G,p_\star)$.  For densities $p_1,p_2$ and a real function $f$ square-integrable under both, Cauchy--Schwarz gives
\begin{align}
 \left|\int f(p_1-p_2)\right|
 &=\left|\int f(\sqrt{p_1}-\sqrt{p_2})(\sqrt{p_1}+\sqrt{p_2})\right|\notag\\
 &\le H(p_1,p_2)\left\{\int f^2(\sqrt{p_1}+\sqrt{p_2})^2\right\}^{1/2}\notag\\
 &\le H(p_1,p_2)\left\{2\int f^2(p_1+p_2)\right\}^{1/2}.
 \label{eq:CS-Hellinger}
\end{align}

Recall the identity $H_j(Z+a) = \sum_{r=0}^j \binom{j}{r} a^r H_{j-r}(Z)$ (see \cite[Eq.~(58)]{WeissteinHermitePolynomial}). 
For $Z\sim N(0,1)$ and $j\ge1$, orthogonality yields
\begin{align*}
\Expect H_j(Z+a)^2
 &=\sum_{r=0}^j\binom jr^2a^{2r}(j-r)! \le j!\sum_{r=0}^j\frac{(j a^2)^r}{(r!)^2} 
 \le j!\pth{\sum_{r=0}^\infty \frac{(ja^2)^{r/2}}{r!}}^2 = j!e^{2|a|\sqrt j},
\end{align*}
where the first inequality uses $j!/(j-r)!\le j^r$. The same bound holds for $j=0$, since $H_0=1$.  Applying it coordinatewise and using independence, we obtain
\[
 \Expect_{N(\theta,I)}H_\alpha(X)^2
 \le\alpha!\exp\left\{2\sum_{i=1}^d|\theta_i|\sqrt{\alpha_i}\right\}
 \le\alpha!e^{2\gamma_K\sqrt{|\alpha|}},
 \qquad \theta\in K,
\]
where the second inequality follows from Cauchy--Schwarz and $\norm{\theta}\le \gamma_K$.  Since $\Expect_{N(\theta,I)}H_\alpha(X)=\theta^\alpha$, we have
\[
 \Delta_\alpha(G)=\int H_\alpha(x)\{p_G(x)-p_\star(x)\}\d x.
\]
Applying \prettyref{eq:CS-Hellinger} with $f=H_\alpha$, $p_1=p_G$, and $p_2=p_\star$, the second moments under both mixture are bounded by the supremum of the second moment under $N(\theta, I)$ over $\theta \in K$, so
\[
 |\Delta_\alpha(G)|
 \le 2H\sup_{\theta\in K}\{\Expect_{N(\theta,I)}H_\alpha(X)^2\}^{1/2}
 \le 2H\sqrt{\alpha!}\,e^{\gamma_K\sqrt{|\alpha|}}.
\]
This proves \prettyref{eq:low-coeff} without an atomicity assumption.

Now suppose that $G_\star\in\cP_s(K)$.  For $j\ge1$, let
\[
T_j := \int_K\theta^{\otimes j}\d (G-G_\star)(\theta),
\qquad
\norm{T_j}_{\mathrm{op}}
 := \sup_{\norm{v_1},\ldots,\norm{v_j}\le1}
       |\langle T_j,v_1\otimes\cdots\otimes v_j\rangle|,
\]
and $\pnorm{T_j}{F}$ denotes the usual Frobenius norm of $T_j$. The entries of $T_j$ are the moment differences $\Delta_\alpha(G)$ with $|\alpha|=j$.  Hence 
\[
 \max_{1\le j\le2s}\norm{T_j}_{\mathrm{op}}
 \le\max_{1\le j\le2s}\norm{T_j}_F
 \le\max_{1\le j\le2s}d^{j/2}\max_{|\alpha|=j}|\Delta_\alpha(G)|
 \le C_{K,s}H
\]
where the last step applies \prettyref{eq:low-coeff}. 
Applying the moment comparison in \cite[Lemma~6.3]{ZhangVolgushev2026}, we get, for $\ell>2s$,
\[
 \norm{T_\ell}_{\mathrm{op}}
 \le\ell(\gamma_K+1)^{2s\ell}
       \max_{1\le j\le2s}\norm{T_j}_{\mathrm{op}}
 \le C_{K,s}^{\ell+1}H,
\]
after enlarging $C_{K,s}$.  Each $\Delta_\alpha(G)$ with $|\alpha|=\ell$ is an entry of $T_\ell$, so $|\Delta_\alpha(G)|\le\norm{T_\ell}_{\mathrm{op}}$.  This proves \prettyref{eq:mixed-high}.

\subsection{Proof of Lemma~\ref{lem:Gaussian-translation}}
\label{app:polynomial-translations}

If $m=0$ or $z=0$, the bound is an equality.  Suppose $m\ge1$ and $z\ne0$, and expand $P=\sum_{|\alpha|\le m}c_\alpha\psi_\alpha$ in the orthonormal product-Hermite basis.  The Hermite differentiation identities $\partial_j \psi_\alpha = \sqrt{\alpha_j}\psi_{\alpha-e_j}$ and orthogonality give
\[
 \sum_{j=1}^d\norm{\partial_jP}_{L_2(\varphi)}^2
 =\sum_{|\alpha|\le m}|\alpha|\,|c_\alpha|^2
 \le m\norm P_{L_2(\varphi)}^2.
\]
For $z\cdot\nabla=\sum_{j=1}^dz_j\partial_j$, Cauchy--Schwarz yields
\[
 \norm{(z\cdot\nabla)P}_{L_2(\varphi)}^2
 \le \norm{z}^2\sum_{j=1}^d\norm{\partial_jP}_{L_2(\varphi)}^2
 \le \norm{z}^2m\norm P_{L_2(\varphi)}^2.
\]
Since differentiation lowers total degree, iterating the above inequality $k$ times gives
\[
 \norm{(z\cdot\nabla)^kP}_{L_2(\varphi)}
 \le \norm{z}^k\sqrt{\frac{m!}{(m-k)!}}\norm P_{L_2(\varphi)}
 \le (\norm{z}\sqrt m)^k\norm P_{L_2(\varphi)},
 \qquad 0\le k\le m.
\]
Because $P$ is a degree-$m$ polynomial, an exact Taylor expansion gives  $P(x+z)=\sum_{k=0}^m(z\cdot\nabla)^kP(x)/k!$. Applying the above estimate term by term yields
\begin{align*}
\norm{P(\cdot+z)}_{L_2(\varphi)} \le\sum_{k=0}^m\frac{1}{k!}\norm{(z\cdot\nabla)^kP}_{L_2(\varphi)} \le\left\{\sum_{k=0}^m\frac{(\norm{z}\sqrt m)^k}{k!}\right\}\norm P_{L_2(\varphi)}
 \le e^{\norm{z}\sqrt m}\norm P_{L_2(\varphi)}.
\end{align*}

\subsection{Proof of \prettyref{lmm:esseen}}
\label{app:esseen}
Throughout this proof, $C$ denotes a constant depending on $k$ whose value may change from line to line.
Denote the characteristic function of $Y_1$ by $\phi_{Y_1}(u)=\Expect e^{i\langle u,Y_1\rangle}$, $u\in\reals^k$.
Let $Y_1'$ be an
independent copy of $Y_1$.  Then
\begin{align*}
|\phi_{Y_1}(u)|^2 &=\E\cos\iprod{u}{Y_1-Y_1'}\\
&\leq 1-u^\top\Sigma u + C\Expect|\langle u,Y_1-Y_1'\rangle|^3\\
&\le 1-u^\top\Sigma u+8Cm_3\norm{u}^3
\end{align*}
where the first step is because $|\phi_{Y_1}(u)|^2=\phi_{Y_1}(u)\phi_{Y_1}(-u)$ and $Y_1$ is real-valued, the second is by Taylor expanding $\cos$, and finally we apply Cauchy-Schwarz and $\Expect\norm{Y_1-Y_1'}^3\leq \Expect(\norm{Y_1}+\norm{Y_1'})^3\leq 8m_3$.

For $S_n=(Y_1+\ldots+Y_n)/\sqrt{n}$, by independence we have
$\phi_{S_n}(t)=
\phi_{Y_1}(t/\sqrt{n})^n$, $t\in\R^k$.
Using $1+x\le e^x$ and the above bound on $\phi_{Y_1}$, we obtain
\begin{equation}
 |\phi_{S_n}(t)|
 \le\exp\left\{-\frac12t^\top\Sigma t
                 +\frac{Cm_3\norm{t}^3}{\sqrt n}\right\}.
 \label{eq:ac-cf-sum}
\end{equation}

The multivariate Esseen inequality \cite[Lemma 6.1]{esseen1968concentration} states
\begin{equation}\label{eq:ac-esseen}
Q(S_n,\eps) \le C\eps^k \int_{\norm{t} \le 1/\eps}|\phi_{S_n}(t)|\d t.
\end{equation}
For $\|t\|\leq1/\eps$, by the assumption that 
$\sqrt n\,\eps^3\ge m_3$, we have
$ \frac{m_3\norm{t}^3}{\sqrt n}
 \le\frac{m_3}{\sqrt n\,\eps^3}\le 1$. Furthermore,
$
 e^{-t^\top\Sigma t/2}
 \le e^{1/2}e^{-t^\top(\Sigma+\eps^2I)t/2}$. Thus,
substituting \prettyref{eq:ac-cf-sum} into \prettyref{eq:ac-esseen}, we obtain
\begin{align*}
Q(S_n,\eps) &\le C\eps^k\int_{\R^k}
e^{-t^\top(\Sigma+\eps^2I)t/2}\d t = \frac{C(2\pi)^{k/2}\eps^k}{\sqrt{\det(\Sigma+\eps^2I)}}.
\end{align*}

\section*{Acknowledgment}
Y.W.~is indebted to Yury Polyanskiy and Pengkun Yang for many helpful discussions  in 2020 on the stronger self-regularization for constrained NPMLE, using the approach suggested by \cite[Eq.~(40)]{PolyanskiyWu2020}. While the present proof takes a different route, those discussions particularly on anti-concentration are nevertheless useful. Y.W.~is also grateful to Yury Polyanskiy for bringing the AI-generated manuscript in footnote~\ref{fn:support-divergence} to our attention.

\bibliographystyle{alpha}
\bibliography{multidim_NPMLE}

\end{document}